\tikzstyle{Boxes}=[fill=none, draw=black, shape=rectangle, tikzit shape=rectangle, tikzit draw=black]
\tikzstyle{Arrow}=[->]
\newtheorem{thm}{Theorem}[section]
\newtheorem{prop}[thm]{Proposition}
\newtheorem{conj}[thm]{Conjecture}
\newtheorem{cor}[thm]{Corollary}
\newtheorem{lem}[thm]{Lemma}
\theoremstyle{definition}
\newtheorem{rem}[thm]{Remark}
\newtheorem{def1}[thm]{Definition}
\newcommand{\bk}{\backslash}
\newcommand{\mc}{\mathcal}
\newcommand{\mb}{\mathbb}
\renewcommand{\ss}{\substack}
\renewcommand{\deg}[1]{\textnormal{deg}(#1)}
\newcommand{\La}{\langle}
\newcommand{\Ra}{\rangle}
\newcommand{\llf}{\left\lfloor}
\newcommand{\lla}{\left\langle}
\newcommand{\e}{\varepsilon}
\newcommand{\rrf}{\right\rfloor}
\newcommand{\rra}{\right\rangle}
\newcommand{\mbf}{\boldsymbol}
\newcommand{\ra}{\to}
\newcommand{\asum}{\sideset{}{^{\ast}}\sum}
\renewcommand{\bar}{\overline}
\renewcommand{\Re}{\textnormal{Re}}
\let\oldpmod\pmod
\renewcommand{\pmod}[1]{\hspace{-0.1cm}\oldpmod {#1}}
\begin{document}
\title[Erd\H{o}s discrepancy in function fields]{Beyond the Erd\H{o}s discrepancy problem in function fields}

\author
{Oleksiy Klurman}
\address{School of Mathematics,
University of Bristol, Woodland Road, Bristol, BS8 1UG, UK}
\email{lklurman@gmail.com}

\author{Alexander P. Mangerel}
\address{
Department of Mathematical Sciences, Durham University, Upper Mountjoy Campus, Stockton Road, Durham, UK}
\email{smangerel@gmail.com}

\author{Joni Ter\"av\"ainen}
\address{Department of Mathematics and Statistics \\
University of Turku, 20014 Turku\\
Finland}
\email{joni.p.teravainen@gmail.com}

\begin{abstract}
We characterize the limiting behavior of partial sums of multiplicative functions $f:\mb{F}_q[t]\to S^1$. In contrast to the number field setting, the characterization depends crucially on whether the notion of discrepancy is defined using \emph{long intervals}, \emph{short intervals}, or \emph{lexicographic intervals}.

Concerning the notion of short interval discrepancy, we show that a completely multiplicative $f:\mathbb{F}_q[t]\to\{-1,+1\}$ with $q$ odd has bounded short interval sums if and only if $f$ coincides with a ``modified" Dirichlet character to a prime power modulus. This confirms the function field version of a conjecture over $\mathbb{Z}$ that such modified characters are extremal with respect to partial sums. 

Regarding the lexicographic discrepancy, we prove that the discrepancy of a completely multiplicative sequence is always infinite if we define it using a natural lexicographic ordering of $\mathbb{F}_{q}[t]$. This  answers a question of Liu and Wooley.

Concerning the long sum discrepancy, it was observed by the Polymath 5 collaboration that the  Erd\H{o}s discrepancy problem admits infinitely many completely multiplicative counterexamples on $\mathbb{F}_q[t]$. Nevertheless, we are able to classify the  counterexamples if we restrict to the class of modified Dirichlet characters. In this setting, we determine the precise growth rate of the discrepancy, which is still unknown for the analogous problem over the integers.  
\end{abstract}

\subjclass[2020]{11T55, 11K38, 11N37}

\maketitle

\section{Introduction and Results}

The Erd\H{o}s Discrepancy Problem (EDP), formulated in~\cite{erdos1957} (see also~\cite{Katsur},~\cite{Chudak} for related questions), states that, given any sequence $f : \mb{N} \to \{-1,+1\}$, the \emph{discrepancy} of $f$ on homogeneous arithmetic progressions satisfies
\begin{align}\label{eq1}
\sup_{d, N \geq 1} \Big|\sum_{n \leq N} f(dn)\Big| = \infty.
\end{align}
This was eventually settled affirmatively in a groundbreaking paper of Tao~\cite{TaoEDP} in 2015.

The special case where $f$ is \emph{completely multiplicative} (that is, $f(mn)=f(m)f(n)$ for all $m,n\in \mathbb{N}$) was already highlighted by Erd\H{o}s as the key special case; in this case, the formulation simplifies to  
\begin{align}\label{eq2}
\sup_{N \geq 1} \Big|\sum_{n \leq N} f(n)\Big| = \infty,\quad f:\mathbb{N}\to \{-1,+1\}\,\,\textnormal{completely multiplicative}.
\end{align}
The Polymath 5 online collaboration project~\cite{polymath5} devoted to the Erd\H{o}s discrepancy problem was indeed able to reduce the proof of~\eqref{eq1} to (an averaged version of) the completely multiplicative case~\eqref{eq2}, with $f$ now taking values on the unit circle $S^1:=\{z\in \mathbb{C}:\,\, |z|=1\}$ of the complex plane. Tao established in~\cite{TaoEDP} this case of completely multiplicative functions, and hence the whole conjecture~\eqref{eq1}, making crucial use of his proof~\cite{tao} of the logarithmic two-point Elliott conjecture on correlations of multiplicative functions. A further reason to concentrate on the discrepancy of completely multiplicative sequences is that such sequences or small perturbations thereof are speculated  to have minimal growth rate for the discrepancy among all sequences, as discussed below.
 
In this paper we shall consider corresponding discrepancy problems in function fields. Let $q$ be a fixed prime power and let $\mc{M}$ denote the set of monic polynomials in $\mb{F}_q[t]$; this set $\mc{M}$ is an analogue of the positive integers. For elements of $\mathcal{M}$ we have a unique factorization into products of irreducible monic polynomials (prime polynomials). Let $\deg{G}$ denote the degree of $G\in \mathbb{F}_q[t].$ 
For completely multiplicative functions $f : \mc{M} \to \{-1,+1\}$ (that is, functions that satisfy $f(G_1G_2)=f(G_1)f(G_2)$ for all $G_1,G_2\in \mc{M}$), it is known (see e.g.~\cite{GrHaSoFF}) that the partial sums  
\[\sigma_f(n):=\sum_{\substack{G \in \mc{M} \\ \deg{G} \leq n}}f(G)\]
behave rather differently from their number field counterparts. In particular, in the Polymath 5 project~\cite{polymath_example} it was observed that if we define the \emph{long sum discrepancy}
\begin{equation}\label{eq_lsdisc}
\mc{D}_f := \sup_{\substack{D\in \mc{M} \\ N \geq 1}} \Big|\sum_{\substack{G \in \mc{M} \\ \deg{G} \leq N}} f(DG)\Big|,
\end{equation}
then the Erd\H{o}s discrepancy question for $\mathcal{D}_f$ has a \emph{negative} answer, in the sense that there exists even a completely multiplicative $f:\mc{M}\to \{-1,+1\}$ such that $\mathcal{D}_f<\infty.$ In fact, without much additional difficulty we can prove the following.
\begin{prop} \label{prop_bddlf_unbddsf}
There are uncountably  many completely multiplicative functions $f : \mc{M} \to \{-1,+1\}$ for which $\mc{D}_f < \infty$. 
\end{prop}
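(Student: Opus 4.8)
The plan is to deduce the statement from a single structural observation: for a completely multiplicative $f:\mc{M}\to\{-1,+1\}$, the discrepancy $\mc{D}_f$ depends only on the numbers $r_m(f):=\#\{P \text{ prime}:\ \deg{P}=m,\ f(P)=1\}$, $m\geq1$, and not on \emph{which} of the primes of degree $m$ are assigned the value $+1$ (here $\pi_q(m)$ will denote the number of primes of degree $m$). Granting this, it suffices to exhibit one completely multiplicative $f_\ast$ with $\mc{D}_{f_\ast}<\infty$ that takes \emph{both} values $\pm1$ on the primes of degree $m$ for infinitely many $m$: every completely multiplicative $g$ with $r_m(g)=r_m(f_\ast)$ for all $m$ then has $\mc{D}_g=\mc{D}_{f_\ast}<\infty$, and the number of such $g$ equals $\prod_{m\geq1}\binom{\pi_q(m)}{r_m(f_\ast)}$, a product in which infinitely many factors are at least $2$, hence uncountable.

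First I would record that $\mc{D}_f=\sup_{N\geq1}|\sigma_f(N)|$: complete multiplicativity gives $\sum_{\deg{G}\leq N}f(DG)=f(D)\sigma_f(N)$, and $|f(D)|=1$. Passing to generating functions (as formal power series in $u$), $\sum_{N\geq0}\sigma_f(N)u^N=(1-u)^{-1}F_f(u)$, where $F_f(u):=\sum_{G\in\mc{M}}f(G)u^{\deg{G}}=\prod_P(1-f(P)u^{\deg{P}})^{-1}$ by unique factorization. Grouping the primes by degree, and using that each factor $1-f(P)u^m$ equals $1-u^m$ or $1+u^m$ according to the sign of $f(P)$, yields
\[
F_f(u)=\prod_{m\geq1}\Big(\prod_{\deg{P}=m}(1-f(P)u^m)\Big)^{-1}=\prod_{m\geq1}(1-u^m)^{-r_m(f)}(1+u^m)^{-(\pi_q(m)-r_m(f))}.
\]
Thus $F_f$, hence the sequence $(\sigma_f(N))_N$ and the number $\mc{D}_f$, are determined by $(r_m(f))_{m\geq1}$ alone, which is the observation above.

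For the structured example I would take $f_\ast$ to be a modified real Dirichlet character. Let $\chi$ be a non-principal quadratic character modulo $T$, with $T=t$ when $q$ is odd and $T=t^2$ when $q$ is even (a prime-power modulus is needed in even characteristic so that a non-trivial order-two character exists), and set $f_\ast(t)=-1$ and $f_\ast(P)=\chi(P)$ for every prime $P\neq t$. Since $\chi(t)=0$, one has $F_{f_\ast}(u)=(1+u)^{-1}L(u,\chi)$ with $L(u,\chi)=\prod_P(1-\chi(P)u^{\deg{P}})^{-1}$; by the polynomiality of function-field Dirichlet $L$-functions, $L(u,\chi)$ is a polynomial of degree $\leq\deg{T}-1$, and a direct computation gives $L(u,\chi)=1$ when $T=t$ and $L(u,\chi)=1-u$ when $T=t^2$. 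In either case $(1-u)^{-1}F_{f_\ast}(u)$ is a rational function whose poles all lie on $|u|=1$ and are simple, so $\sigma_{f_\ast}(N)$ is bounded and $\mc{D}_{f_\ast}<\infty$; alternatively, this is a special case of the classification of modified-character counterexamples obtained later in the paper. Finally, $\chi$ is non-principal and real, so it attains both $+1$ and $-1$ on residue classes coprime to $T$; by the prime polynomial theorem in arithmetic progressions, for all sufficiently large $m$ there exist primes of degree $m$ in each of those classes, whence $0<r_m(f_\ast)<\pi_q(m)$ and $\binom{\pi_q(m)}{r_m(f_\ast)}\geq\pi_q(m)\geq2$ for all large $m$ — exactly what the counting step requires.

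Consistent with the excerpt's ``not much additional difficulty'', there is no serious obstacle: the reduction $\mc{D}_f=\sup_N|\sigma_f(N)|$, the Euler product manipulation, and the cardinality count are purely formal. The only non-trivial ingredient is the last point — that $f_\ast$ genuinely uses both signs at infinitely many degrees — and for this the standard equidistribution of prime polynomials in residue classes modulo $T$ is amply sufficient; it is precisely this input that turns ``one counterexample'' into ``uncountably many''.
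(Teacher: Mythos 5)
Your proof is correct, and it has the same high-level architecture as the paper's (one bounded-discrepancy base example, plus the observation that $\mc{D}_f$ is invariant under re-distributing the $\pm 1$ values among primes of a fixed degree, plus a cardinality count), but the details differ in ways worth noting. You encode the invariance via the Euler product grouped by degree, so that $F_f(u)=\prod_m(1-u^m)^{-r_m(f)}(1+u^m)^{-(\pi_q(m)-r_m(f))}$ and hence the whole sequence $(\sigma_f(N))_N$ depend only on the counts $r_m(f)$; the paper instead works with the von Mangoldt-weighted sums $\alpha_d=\sum_{\deg G=d}\Lambda(G)f(G)$ and the recursion $d\beta_d=\sum_{i}\alpha_i\beta_{d-i}$, which carries the same information (the $\alpha$'s and $r$'s determine one another) but somewhat less transparently. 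More substantively, the paper takes its base example off the shelf from the Polymath 5 construction, which is a recursive procedure cited without proof, whereas you construct an explicit modified quadratic character ($\chi$ modulo $t$ for $q$ odd, modulo $t^2$ for $q$ even, with $f_\ast(t)=-1$) and verify boundedness directly: computing $L(u,\chi)=1$ and $L(u,\chi)=1-u$ respectively gives $\sum_N\sigma_{f_\ast}(N)u^N=1/(1+u)$ in both cases, so $\sigma_{f_\ast}(N)=(-1)^N$. This makes the proof self-contained and dovetails with the prime-power cases of Corollary~\ref{cor_lsdisc1} and Theorem~\ref{thm_lsdisc_genchar}(a). You also make explicit, via the prime polynomial theorem in arithmetic progressions, that primes of both signs exist in every sufficiently large degree (so $0<r_m(f_\ast)<\pi_q(m)$); the paper's flipping construction quietly assumes the corresponding fact for the Polymath 5 example without comment. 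Finally, your count $\prod_{m\geq1}\binom{\pi_q(m)}{r_m(f_\ast)}$ and the paper's subset-indexed family $\{f_S:S\subset\mb{N}\cap[C+1,\infty)\}$ are the same uncountability mechanism phrased in two ways.
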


One of the main goals of the present paper is to characterize the boundedness of partial sums of completely multiplicative functions in function fields, discovering along the way difficulties and features that are not present in the integer setting. We apply this to demonstrate that there are natural formulations of the Erd\H{o}s discrepancy problem in function fields that in contrast have an affirmative answer for completely multiplicative sequences (see Theorem~\ref{thm_lexicographic}). Another consequence of our work is further evidence towards the widely-believed conjecture over $\mb{Z}$  that the functions whose discrepancies are of slowest possible growth are ``modified" characters (Conjecture~\ref{conj_extremal}). See Theorems~\ref{thm_edpff} and~\ref{thm_lsdisc_genchar} for a precise statement (and Definition~\ref{def_modified} for the notion of modified characters).

\subsection{Extremizers for the short sum discrepancy}

The main reason why $\mathcal{D}_f$ is not suitably well-behaved in function fields is because long intervals are too coarse to witness discrepancies in a given sequence. More precisely, an interval $\mc{M}_{\leq N} := \{G\in \mathcal{M}:\,\, \deg{G}\leq N\}$ contains too few other intervals $\mc{M}_{\leq n}$; there are only $N+1$ of them, whereas the interval $\mc{M}_{\leq N}$ has size (i.e. number of elements) of order $\asymp q^N$. In contrast, the interval $[1,N]$ in $\mathbb{N}$ contains $N$ intervals of the form $[1,n]$ with $n \in \mb{Z}$. 
At the same time, Tao's proof of the Erd\H{o}s discrepancy problem makes full use of the fact that there are a lot of different subintervals for an interval in $\mathbb{N}$ by showing in fact the unboundedness of the quantity
\begin{align*}
\frac{1}{\log N}\sum_{m\leq N}\frac{1}{m}\Big|\sum_{|n-m|<H}f(n)\Big|^2,    
\end{align*}
where $H=H(N)$ is slowly growing. This suggests that it is natural to look at the corresponding \emph{short sum discrepancy} over function fields:
\begin{align*}
\mc{S}_f := \limsup_{H \to \infty} \limsup_{N \to \infty}  \sup_{\substack{D,G_0 \in \mc{M} \\ \deg{G_0} = N \\ D\in \mathcal{M}}} \Big|\sum_{\substack{G \in \mc{M} \\ \deg{G-G_0} < H}} f(DG)\Big|,
\end{align*}
which is now taken over the family of short intervals $$I_H(G_0):=\{G\in \mathcal{M}:\,\, \deg{G-G_0}<H\}.$$
These short intervals are much more numerous than the corresponding long intervals and thus provide a much more refined scale to measure the fluctuations of the partial sums; there are $\asymp q^N$ of them inside the set of polynomials of degree at most $N$. 

 Note that over the integers the short sum discrepancy is bounded from above in terms of the long sum discrepancy: since the integers are linearly ordered, we get by the triangle inequality that
$$
\limsup_{H\to \infty}\limsup_{N \to \infty}  \Big|\sum_{|n-N|\leq H} f(dn)\Big| \leq 2 \limsup_{N \to \infty} \Big|\sum_{n \leq N} f(dn)\Big|.
$$

Thus, one presumes that the behavior of the short sum discrepancy $\mathcal{S}_f$ is rather similar to that of Erd\H{o}s discrepancy in the integers. Indeed, we show that $\mathcal{S}_f=\infty$ for ``nearly all'' completely multiplicative functions $f:\mathcal{M}\to \{-1,+1\}$, but, in contrast to the integer case, it turns out that there are also a few exceptional functions. Our next theorem gives a complete classification of the cases where $\mathcal{S}_f$ is bounded for a completely multiplicative function $f:\mathcal{M}\to \{-1,+1\}$ (see Definition~\ref{defn2} below for the definition of a short interval character and its length).

\begin{cor}[Short sum discrepancy is bounded only for modified characters of prime power modulus]\label{short_extr}
Let $f: \mc{M} \to \{-1,+1\}$ be completely multiplicative. Then $\mc{S}_f < \infty$ if and only if there is a prime power $P^k\in \mathcal{M}$, a primitive Dirichlet character $\chi$ modulo $P^k$, a short interval character $\xi$, and an integer $j\in \{0,1\}$ such that $f(P') = \chi(P')\xi(P')(-1)^{j \textnormal{deg}(P')}$ for all primes $P' \neq P$. Moreover, if $q$ is odd, we have $\xi\equiv 1$.
\end{cor}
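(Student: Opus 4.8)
The plan is to derive Corollary~\ref{short_extr} as the $\{-1,+1\}$-valued specialization of Theorem~\ref{thm_edpff}, the general classification of completely multiplicative $f:\mathcal{M}\to S^1$ with $\mathcal{S}_f<\infty$. Then $(\Leftarrow)$ is immediate, since a function of the shape described in the statement is a special case of the functions classified there, so $\mathcal{S}_f<\infty$. For $(\Rightarrow)$, Theorem~\ref{thm_edpff} supplies a prime $P$, an exponent $k\geq 1$, a primitive Dirichlet character $\chi\bmod P^k$, a short interval character $\xi$ of some length $\ell$ (which factors through a finite group $V$), and a unimodular $\rho\in S^1$ --- the twist by the degree at the place at infinity --- such that $f(P')=\chi(P')\xi(P')\rho^{\deg{P'}}$ for all primes $P'\neq P$, the value $f(P)$ being unconstrained. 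Everything that remains is to exploit that $f$ is real: I want to show $\rho\in\{-1,+1\}$ in general, and $\xi\equiv 1$ when $q$ is odd.

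Because $f$, $\chi$, $\xi$ and $G\mapsto\rho^{\deg{G}}$ are all completely multiplicative, the relation above extends to $f(G)=\chi(G)\xi(G)\rho^{\deg{G}}$ for every monic $G$ coprime to $P$. To pin down $\rho$, observe that for each $D\geq k\deg{P}+\ell$ there is a monic $G$ of degree exactly $D$ with $G\equiv 1\pmod{P^k}$ (so $\chi(G)=1$) and with no monomial of degree in the open interval $(D-\ell,D)$ (so the image of $G$ in $V$ is trivial, hence $\xi(G)=1$): take $t^D$ plus the degree-$<k\deg{P}$ representative of $1-t^D$ modulo $P^k$. For such $G$ one gets $f(G)=\rho^{D}$, which lies in $\{-1,+1\}$ since $f$ does; applying this to $D$ and $D+1$ yields $\rho=\rho^{D+1}/\rho^{D}\in\{-1,+1\}$, i.e.\ $\rho=(-1)^{j}$ with $j\in\{0,1\}$. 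Substituting back gives exactly $f(P')=\chi(P')\xi(P')(-1)^{j\deg{P'}}$ for all $P'\neq P$, and we may take $\chi$ primitive by shrinking the conductor if needed.

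For the final clause, recall that short interval characters of length $\ell$ are precisely the characters of a finite abelian group $V$ of $q$-power order --- a quotient of the pro-$p$ group of principal units at the place at infinity, where $p=\textnormal{char}(\mathbb{F}_q)$ --- so $\widehat{V}$ is itself a $p$-group. By the last paragraph, $\xi(G)=f(G)(-1)^{j\deg{G}}\in\{-1,+1\}$ for every monic $G$ coprime to $P$ with $G\equiv 1\pmod{P^k}$; moreover, for any fixed $D\geq k\deg{P}+\ell$, such polynomials of degree $D$ realise \emph{every} class of $V$ (prescribe the coefficients at degrees $D-1,\dots,D-\ell+1$ freely, then absorb the congruence modulo $P^k$ into a tail of degree $\leq D-\ell$). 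Hence $\xi$ takes only the values $\pm 1$ on $V$, so $\xi^{2}$ is trivial; when $q$ is odd, $\widehat{V}$ has odd order and hence no element of order $2$, forcing $\xi\equiv 1$. The analytic heart of the statement is entirely Theorem~\ref{thm_edpff}; granting it, the above is bookkeeping, the only slightly delicate points being the pro-$p$ structure of short interval characters and the elementary fact --- a Chinese remainder computation between $P^{k}$ and the place at infinity --- that each residue class mod $P^{k}$ meets every short interval class in every sufficiently large degree.
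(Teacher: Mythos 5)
Your proposal is correct, but it reaches the real-valued refinements by a genuinely different route than the paper. The paper does not post-process Theorem~\ref{thm_edpff}: instead it reruns that theorem's proof using the \emph{second} conclusion of Proposition~\ref{prop_redtoDirPret}, which already forces $\chi$ real, $\xi$ quadratic, $\theta\in\{0,1/2\}$ (and $\xi\equiv 1$ for $q$ odd) at the pretentious-classification stage; that conclusion in turn rests on the real-valued case of Theorem~\ref{LogEllFF1} and on a pretentious-distance computation showing $\mathbb{D}(1,\chi^2\xi^2e_{2\theta};N)\ll_C 1$ forces $\xi^2\equiv 1$ and $2\theta\equiv 0\pmod 1$. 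You instead take Theorem~\ref{thm_edpff} as a black box and extract $e(\theta)\in\{\pm 1\}$ and the structure of $\xi$ by purely algebraic means: evaluating $f$ on explicitly constructed monic $G\equiv 1\pmod{P^k}$ whose top $\ell+1$ coefficients match $t^D$ (so $\chi(G)=\xi(G)=1$ and $f(G)=e(\theta)^D\in\{\pm1\}$), and then using that the short-interval character group is a $p$-group so that a real-valued $\xi$ must be trivial when $q$ is odd. Both arguments are sound; yours is a cleaner logical deduction of the corollary from the theorem's statement alone (and your $p$-group observation is exactly the paper's parenthetical remark that no nontrivial real short interval characters exist for $q$ odd), while the paper's route is shorter given the machinery already set up and yields the marginally stronger normalization that $\chi$ itself may be taken real. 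Your constructions check out: the congruence and top-coefficient conditions are compatible once $D\geq k\deg{P}+\ell$, and surjectivity onto the short-interval classes in a fixed large degree follows since every residue class mod $P^k$ has a representative of degree $<k\deg{P}$.
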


This result is a corollary of the following more general theorem that applies to completely multiplicative functions taking values on the unit circle.

\begin{thm}\label{thm_edpff}
Let $f: \mc{M} \to S^1$ be a completely multiplicative function. Then $\mc{S}_f < \infty$ if and only if there is a prime power $P^k\in \mathcal{M}$, a primitive Dirichlet character $\chi$ modulo $P^k$, a short interval character $\xi$, and a real number $\theta \in [0,1]$ such that $f(P') = \chi(P')\xi(P')e^{2\pi i \theta \deg{P'}}$ for all primes $P' \neq P$.
\end{thm}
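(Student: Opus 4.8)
I would establish the two implications separately, in each case analysing the sums $\sum_{G\in I_H(G_0)}f(G)$ directly; note first that complete multiplicativity makes the parameter $D$ irrelevant, since $\sum_{\deg{G-G_0}<H}f(DG)=f(D)\sum_{\deg{G-G_0}<H}f(G)$ and $|f(D)|=1$.

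\emph{The ``if'' direction.} Suppose $f(P')=\chi(P')\xi(P')e^{2\pi i\theta\deg{P'}}$ for all primes $P'\ne P$, with $\chi$ primitive modulo $P^k$. Then $f(G)=\chi^{\ast}(G)\xi(G)e^{2\pi i\theta\deg{G}}$, where $\chi^{\ast}$ is the completely multiplicative ``modified character'' that equals $\chi$ on monic polynomials coprime to $P$ and some fixed unimodular number on $P$. For $\deg{G_0}$ large compared with $H$ and the length $m$ of $\xi$, every element of $I_H(G_0)$ has the same degree and the same top $m$ coefficients as $G_0$, so $\xi$ and $G\mapsto e^{2\pi i\theta\deg{G}}$ are constant on $I_H(G_0)$, and it remains to bound $\sum_{G\in I_H(G_0)}\chi^{\ast}(G)$. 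Writing $G=P^{v}S$ with $P\nmid S$, grouping by $v$, and rewriting the interval sum as $\sum_{\deg{\delta}<H}\chi^{\ast}(G_0'+\delta)$ (with $G_0'$ the part of $G_0$ of degree $\ge H$), one checks that the residues $(G_0'+\delta)/P^{v}\bmod P^{k}$ equidistribute once $v\deg{P}\le H-k\deg{P}$, whence those terms vanish by orthogonality of the characters modulo $P^{k}$. The survivors are the $O(1)$ ``boundary'' exponents $v$ with $v\deg{P}\in(H-k\deg{P},H]$, each contributing $O(|P|^{k})$, together with the single term indexed by $v=v_P(G_0')$; hence $\mc{S}_f=O_{P,k}(1)$. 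The prime-power hypothesis is essential: for a modulus with $r\ge2$ distinct prime factors, the set of exponent vectors $(v_1,\dots,v_r)$ with $\sum_i v_i\deg{P_i}$ within $O(1)$ of $H$ is positive-dimensional, of size $\gg H$, so the corresponding bound would grow with $H$.

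\emph{The ``only if'' direction --- step 1: pretentiousness.} Assume $\mc{S}_f<\infty$, so there is a constant $C$, independent of $H$, with $|\sum_{G\in I_H(G_0)}f(G)|\le C$ for all large $H$, all sufficiently large $N$, and all $G_0$ of degree $N$. Fixing $H$, I would partition $\{G\in\mc{M}:\deg{G}=N\}$ into its $q^{N-H}$ short intervals of radius $H$, square and sum to obtain $\sum_{\deg{G_0}=N}|\sum_{G\in I_H(G_0)}f(G)|^{2}\le C^{2}q^{N-H}$, then multiply by $q^{-N}$ and sum over $N\le M$; expanding the square (with $a:=G-G'$) this reads
\[
(M+1)+\sum_{\substack{a\ne0\\\deg{a}<H}}\ \sum_{\substack{G\in\mc{M}\\\deg{G}\le M}}\frac{f(G)\overline{f(G-a)}}{|G|}\ \le\ \frac{C^{2}(M+1)}{q^{H}}+O_{H}(1).
\]
For each fixed $a\ne0$ the inner sum is a logarithmically averaged two-point autocorrelation of $f$; by the function-field analogue of Tao's logarithmic two-point Elliott theorem it is $o(M)$ unless $f$ is pretentious to some function of the shape $G\mapsto\chi(G)\xi(G)e^{2\pi i\theta\deg{G}}$ --- a Dirichlet character times a short interval character times a degree twist. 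If no such pretentiousness held, the left side above would be $M(1+o(1))$; dividing by $M$ and letting $M\to\infty$ would force $1\le C^{2}/q^{H}$, impossible once $H$ is large. Hence $f$ pretends to some $\psi_0(G)=\chi_0(G)\xi_0(G)e^{2\pi i\theta_0\deg{G}}$, with $\chi_0$ of some modulus $Q_0$, $\xi_0$ a short interval character, and $\theta_0\in[0,1]$.

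\emph{The ``only if'' direction --- step 2: rigidity and the prime power.} Put $g:=f\cdot\overline{\psi_0^{\ast}}$, where $\psi_0^{\ast}$ is a modified (hence unimodular) version of $\psi_0$; then $g$ is completely multiplicative, unimodular and satisfies $\sum_{P}|1-g(P)|^{2}/|P|<\infty$. The next step is a rigidity argument --- modelled on Tao's treatment of the integer EDP and on rigidity theorems for pretentious multiplicative functions --- showing that boundedness of the short sums of $f=\psi_0^{\ast}g$ forces $g$ to equal $1$ at all but finitely many primes: a prime $P_0$ outside a fixed finite set with $g(P_0)\ne1$ can be used to produce a short interval $I_H(G_0)$ whose $f$-sum grows with $H$. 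Absorbing the finitely many exceptional primes into the Dirichlet factor makes $f$ \emph{exactly} a modified character of some modulus $Q$ times $G\mapsto\xi_0(G)e^{2\pi i\theta_0\deg{G}}$. Running the ``if''-direction computation backwards --- again via the boundary-counting dichotomy --- such a function has bounded short sums only when $Q$ is a power of a single prime $P$; and since $f$ is $S^{1}$-valued, hence cannot coincide with a genuine Dirichlet character (which vanishes on multiples of its modulus), the modification must occur precisely at $P$. Passing to the conductor gives a primitive $\chi$ modulo $P^{k}$ with $f(P')=\chi(P')\xi(P')e^{2\pi i\theta\deg{P'}}$ for all $P'\ne P$, as required.

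\emph{The main obstacle.} The two-point-correlation input (equivalently, a Matom\"aki--Radziwi\l\l{}-type second moment) only shows that $f$ has tiny mean on almost every short interval --- a property equally enjoyed by modified characters of \emph{composite} modulus --- so it cannot by itself detect the prime-power restriction. What does the real work is the geometry-of-numbers dichotomy noted above: the exponent lattice $\{(v_i):\sum_i v_i\deg{P_i}\in(H-O(1),H]\}$ is zero-dimensional and bounded for a single prime, but positive-dimensional (of size $\gg H$) for two or more primes. Coupling this dichotomy with the rigidity step --- localizing the defect $g$ to a single prime while simultaneously pinning down its conductor --- is the heart of the matter; the remaining ingredients are routine second-moment manipulations and the known function-field machinery for correlations of multiplicative functions.
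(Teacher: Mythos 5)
Your overall architecture is the same as the paper's: a second-moment argument plus the function-field two-point logarithmic Elliott theorem to reach pretentiousness; a rigidity step to reduce to modified characters; and a structural analysis of modified characters via their exponent lattices to isolate the prime-power case. Your ``if'' direction matches the paper's essentially line for line (orthogonality kills the bulk, boundary exponents contribute $O(1)$).

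However, there is a genuine gap in the ``only if'' direction which your sketch does not address, and which the paper spends a significant portion of Section~4 resolving. You write that ``running the `if'-direction computation backwards --- again via the boundary-counting dichotomy --- such a function has bounded short sums only when $Q$ is a power of a single prime.'' But the ``if''-direction computation is an \emph{upper} bound by the triangle inequality: one groups $G \in I_H(G_0)$ according to $(G,Q^\infty)$, observes that for the bulk of divisors $D$ the inner sums vanish by orthogonality, and bounds the surviving boundary terms by their number, which is $O(1)$ for a prime power. Reversing this does not work: for a composite modulus $Q$ there are $\gg H^{\omega(Q)-1}$ surviving lattice points $(v_1,\ldots,v_r)$ with $\sum_i v_i \deg{P_i}$ near $H$, but each carries a unimodular weight $f(D)\cdot(\text{boundary character sum})$, and without further input the whole sum could cancel to $O(1)$. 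Counting terms gives only an upper bound; the statement requires a \emph{lower} bound on $\max_{G_0}|\sum_{G\in I_H(G_0)} f(G)|$, and your sketch provides no mechanism to preclude cancellation. The paper's resolution (Proposition~\ref{prop_lrg_ssd}) is a substantial computation: it averages $|\sum_{G\in I_H(G_0)} f(G)|^2$ over all $G_0 \in \mc{M}_N$, decouples the resulting divisor pair via Ramanujan sums and the formula for Gauss sums of imprimitive characters (Lemma~\ref{lem_imprim}), and ultimately shows the second moment is $\gg_Q q^H \sum_{\textnormal{rad}(D)|Q,\, \deg{D} \geq H} q^{-\deg{D}} \gg_Q H^{\omega(Q)-1}$, from which the lower bound follows; the lattice-point count you describe enters only at the very last step. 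This is not a routine reversal of the forward computation, and your proposal leaves the essential positivity input unproved.

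Two smaller points: (i) your framing of the rigidity step (``a prime $P_0$ outside a fixed finite set with $g(P_0) \neq 1$ can be used to produce a short interval whose $f$-sum grows with $H$'') is imprecise --- a single exceptional prime does not give unboundedness. The paper's argument (Proposition~\ref{prop_redtofin}) assumes the exceptional set $S$ is \emph{infinite}, assigns a distinct prime $P_M \in S$ to each polynomial $M$ of degree $< H$, builds a modulus $\Gamma$ and residue $R$ via CRT so that the short sum picks up weights $f\bar\chi(P_M)^{k_M} f(M)$, and then uses the ``rotation trick'' (the lemma about $|\sum_j \zeta_j^{k_j} w_j| \geq m/7$) to align these weights. (ii) The Elliott theorem only yields $N$-dependent characters $\chi_N, \xi_N, \theta_N$; the paper's Proposition~\ref{prop_redtoDirPret} contains a compactness/subsequence argument to pass to a single fixed $\chi, \xi, \theta$ valid for all $N$, including showing via the pretentious triangle inequality that $\xi_j$ stabilizes. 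This uniformization is glossed over in your sketch, though it is more routine than the cancellation issue above.
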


Corollary~\ref{short_extr} and Theorem~\ref{thm_edpff} are closely related to the following conjecture on the growth of the partial sums of multiplicative functions on the integers (see~\cite[Section 1]{TaoEDP} and ~\cite[Section 1]{kmt-ruzsa} for some discussion).

\begin{conj}[Extremality of partial sums of modified characters]\label{conj_extremal}
Let $f:\mathbb{N}\to S^1$ be completely multiplicative. Then
\[\Big|\sum_{n\leq x}f(n)\Big|\ll \log x\]
if and only if there exists a non-principal Dirichlet character $\chi$ modulo a prime power $p^k$ such that $f(p')=\chi(p')$ for all $p' \ne p.$ Conversely, for such $f$ there exists a subsequence $N_k\to\infty,$ such that \begin{align}\label{eqq8}
\Big|\sum_{n\leq N_k}f(n)\Big|\gg \log N_k.    
\end{align}
\end{conj}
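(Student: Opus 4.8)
The plan is to treat the three assertions of Conjecture~\ref{conj_extremal} separately, because their difficulty is very uneven: the ``if'' direction (the upper bound for modified characters) is essentially formal; the ``conversely'' part (a matching lower bound along a subsequence) is a short but genuinely combinatorial argument; and the ``only if'' direction (the classification) is the part that remains open and is the reason the statement is phrased as a conjecture.

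For the upper bound, suppose $f$ is a modified character: $\chi$ is a non-principal character modulo $p^k$ and $f(p')=\chi(p')$ for every prime $p'\neq p$ (the value $f(p)=e^{2\pi i\theta}$ is arbitrary since $f$ is $S^1$-valued). Writing $n=p^a m$ with $p\nmid m$ gives $f(n)=f(p)^a\chi(m)$, and since $\chi(m)=0$ whenever $p\mid m$ the partial sum collapses to a sum over the exponent $a$:
\[
\sum_{n\le x}f(n)=\sum_{0\le a\le\log_p x}f(p)^a\,T\!\left(\frac{x}{p^a}\right),\qquad T(y):=\sum_{m\le y}\chi(m).
\]
Because $\chi$ is non-principal, its sum over a full period of length $p^k$ vanishes, so $T$ is periodic modulo $p^k$ in its integer argument and $|T|\le\phi(p^k)$; there are at most $\log_p x+1$ terms, hence $|\sum_{n\le x}f(n)|\ll_{p^k}\log x$. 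This also explains the prime-power modulus: only when $f$ agrees with a character $\chi$ away from a single prime $p$, and $\chi$ itself has $p$-power conductor, does the partial sum reduce to an exponent sum of length $O(\log x)$.

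For the ``conversely'' part one needs $N_k\to\infty$ with $|\sum_{n\le N_k}f(n)|\gg\log N_k$. The key point is that $T(x/p^a)$ depends only on $\lfloor x/p^a\rfloor\bmod p^k$, hence only on the window of base-$p$ digits $d_a,d_{a+1},\dots,d_{a+k-1}$ of $\lfloor x\rfloor$; thus $\sum_{n\le x}f(n)=\sum_{0\le a\le\log_p x}f(p)^a\,\psi(d_a,\dots,d_{a+k-1})$ for an explicit bounded $\psi$ that vanishes on the all-$(p-1)$ window but is nonzero on, e.g., $(1,0,\dots,0)$. (In the classical case $\chi=\chi_3$, $f(3)=1$ this is the identity $\sum_{n\le N}f(n)=\#\{i:d_i(N)=1\}$ in base $3$, so $N=(3^k-1)/2$ gives discrepancy exactly $k\asymp\log N$.) One then chooses the digits of $N_k$: either greedily in consecutive blocks of length $k$, so that the windows anchored at multiples of $k$ are decoupled, selecting each block to push $f(p)^a\psi(\cdots)$ into the half-plane of the running sum, or --- when $f(p)$ is a root of unity --- as a sparse periodic sequence of a fixed nonzero block separated by blocks chosen to annihilate the contribution of the overlapping windows; either way one gets $\gg\log N_k$ net contribution. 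The one point requiring care is precisely the control of the windows that straddle two blocks; once that is handled, the argument is essentially that of Borwein--Choi--Coons and parallels the computation behind Theorem~\ref{thm_lsdisc_genchar}.

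The remaining implication --- $|\sum_{n\le x}f(n)|\ll\log x$ forces $f$ to be a modified character of prime-power modulus --- is the genuinely hard part, and I would follow the template of Theorem~\ref{thm_edpff} over $\mb{F}_q[t]$. Since $\sum_{n\le x}f(n)=o(x)$, Hal\'asz's theorem supplies a real $t_0$ with $\mb{D}(f,n^{it_0};\infty)<\infty$; ruling out an oscillating main term of size $x^{1+it_0}$ and matching the archimedean twist with a genuine character should produce a primitive $\chi$ modulo some $q$ and a real $t$ with $\sum_p p^{-1}(1-\Re(f(p)\overline{\chi(p)}p^{-it}))<\infty$. One would then argue that $t\neq0$, or $q$ not a prime power, or $f$ disagreeing with $\chi$ at two distinct primes, each forces the partial sums to beat any fixed multiple of $\log x$ (indeed to be $\gg(\log x)^{1+\delta}$, or larger) --- just as in the function-field proof --- leaving only a modified character. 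The main obstacle is that this last step is currently out of reach unconditionally: upgrading the qualitative input ``$\mb{D}(f,\chi n^{it};x)$ bounded'' to the quantitative conclusion ``partial sums $\gg\log x$, and rigidly so'' is a sharpening of Tao's resolution of the Erd\H{o}s discrepancy problem from $\sup=\infty$ to the exact $\log$-scale together with a description of the extremizers, and no such sharpening is known. So the realistic deliverable here is the ``if'' and ``conversely'' parts above, with ``only if'' remaining the conjectural heart of the statement.
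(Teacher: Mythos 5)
The statement you were asked about is a \textbf{Conjecture} in the paper, and the paper does not prove it: the authors explicitly say it remains open, with the only progress being~\cite[Corollary 1.6]{kmt-ruzsa} (the case where $f$ differs from a Dirichlet character at finitely many primes), which in turn subsumes the ``conversely'' bound~\eqref{eqq8} originating from~\cite{bcc}. So there is no paper proof to compare against, and your reading of the statement's structure --- ``if'' easy, ``conversely'' known, ``only if'' the open heart --- is exactly right.

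On the parts you do argue: the ``if'' computation is correct as written; writing $n=p^am$, using $\chi(m)=0$ for $p\mid m$ to drop the coprimality constraint, and bounding $T(y)=\sum_{m\le y}\chi(m)$ by periodicity collapses the partial sum to at most $\log_p x+1$ bounded terms, and this collapse is exactly what requires the prime-power modulus. Your sketch of the ``conversely'' lower bound correctly isolates the Borwein--Choi--Coons mechanism --- each summand $f(p)^a T(\lfloor x/p^a\rfloor)$ depends only on a window of $k$ consecutive base-$p$ digits of $\lfloor x\rfloor$, and one builds $N_k$ by choosing digit blocks so that $\gg\log N_k$ windows contribute nontrivially in roughly the same direction. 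The overlapping-window issue you flag is genuine and is where the rigor lives (this is handled carefully in~\cite[Corollary 1.6]{kmt-ruzsa}); as a sketch your two constructions (greedy half-plane block choice; periodic blocks when $f(p)$ is a root of unity) are both plausible routes to the same end. Your proposed ``only if'' program, modeled on Theorem~\ref{thm_edpff} (Hal\'asz $\Rightarrow$ pretentiousness, then a quantitative dichotomy), is the natural roadmap, but as you correctly say the rigidity step --- upgrading boundedness of $\mb{D}(f,\chi n^{it};x)$ to ``partial sums $\gg\log x$ unless $f$ is a modified character of prime-power modulus'' --- is unknown over $\mb{Z}$; indeed the paper notes the best known generic lower bound is only of order $(\log\log x)^c$. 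That gap is precisely why the statement is a conjecture, and you have correctly delineated what can and cannot presently be proved.
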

Little is known towards this conjecture (which contains the Erd\H{o}s discrepancy problem as a special case), apart from the case where $f$ differs from a Dirichlet character (not necessarily of prime power modulus) at only finitely many primes, which was handled in ~\cite[Corollary 1.6]{kmt-ruzsa}. This case contains~\eqref{eqq8}, which was first handled in~\cite{bcc}, and is the easy part of the conjecture. In fact, the best currently known growth rate for partial sums of length $x$ of a completely multiplicative function $f:\mathbb{N}\to S^1$ is of the form $\Omega( (\log \log x)^{c})$, for some explicit $c > 0$~\cite[Theorem 4.1.1]{mcnamara},~\cite[Section 9.4]{HelRad}.

Both Corollary~\ref{short_extr} and Conjecture~\ref{conj_extremal} manifest the same general phenomenon: the smallest possible discrepancy over $\mb{Z}$ and over $\mb{F}_q[t]$ (for short sums) is attained by ``modified" characters to prime power moduli. Over  $\mathbb{F}_q[t]$ with $q$ \emph{even}, we have an interesting low characteristic phenomenon that the set of characters with bounded discrepancy is somewhat larger than in the case of $q$ odd; this eventually stems from Theorem~\ref{LogEllFF1} below.

Notice that while over $\mb{Z}$ the smallest possible partial sums are believed to be of the order $\asymp \log x$, over $\mb{F}_q[t]$ they are $O(1)$. In order to explain this feature, we recall that for the Borwein--Choi--Coons example~\cite{bcc} given by the``modified" character 
\begin{align*}
f_3(p):=\begin{cases}\chi_3(p),\quad &p\ne 3\\
1,&p=3,
\end{cases}    
\end{align*}
the partial sums satisfy
\begin{align*}
\sum_{n\le x}f_3(n)=\sum_{k\le \log x/\log 3}f_3(3)^k\sum_{m\le \lfloor x/3^k\rfloor}\chi_3(m)\ll \log x,
\end{align*}
since the innermost sum is bounded. On the other hand, to construct a sequence $x_k$ on which the partial sums grow with rate $\gg \log x_k$ one exploits the fact that the intervals $[1,M]$ contain a {\it different number} of residue classes modulo $3$, depending on the value of $M\pmod 3$. This is no longer true over $\mathbb{F}_q[t]$, as all short intervals $I_H(G_0)$ contain the same number of residue classes to any modulus $Q$ as soon as $H \geq \deg Q$. This results in a ``logarithmic" drop\footnote{One can also view this residue class uniformity feature as a manifestation of a ``smoother'' summation in $\mb{F}_q[t]$, in contrast to the sharp cutoffs arising in sums over $\mathbb{Z}$. As pointed out in~\cite{tao} (and attributed to Bill Duke), introducing the smoothing weight $\max\{0,1-\tfrac nx\}$ in the Borwein--Choi--Coons example over the integers also results in uniformly bounded partial sums, and thus a ``logarithmic drop'', as $x \ra \infty$.} as far as quantitative statements are concerned. We shall revisit this further in the following subsections.

\subsection{Discrepancy with the lexicographic ordering}
To rectify the aforementioned difference between the settings of function fields and integers from the previous subsection, a natural approach involves replacing the partial ordering of $\mathbb{F}_q[t]$ employed in constructing the sets $\{G\in \mathbb{F}_q[t]: \deg{G}\leq N\}$ used to define $\mathcal{D}_f$ with a \emph{lexicographic ordering} of $\mathbb{F}_q[t]$. This is (a generalization of) an ordering that has been used in the influential work of Liu and Wooley~\cite{wooley-liu} on Waring's problem over $\mathbb{F}_q[t]$. It arises by associating a base $q$ integer expansion to each polynomial in $\mathbb{F}_q[t]$. In order to define this ordering, we must first impose an ordering on $\mathbb{F}_q$. Let $a_0, a_1, \ldots, a_{q-1}$ be an arbitrary ordering of $\mathbb{F}_q$, and define the size $\langle a\rangle \in \{0,1,\ldots,q-1\}$ of $a \in \mb{F}_q$ as
\begin{align*}
\langle a \rangle=k\quad \textnormal{if}\quad a=a_k.    
\end{align*}
Then we extend $\La \cdot\Ra$ to $\mathbb{F}_{q}[t]$ by defining
\begin{align*}
\La b_Nt^{N}+\cdots +b_1t+b_0\Ra=\La b_N\Ra q^N+\cdots + \La b_1\Ra q+\La b_0\Ra.   
\end{align*}
Implicit in this definition is the requirement\footnote{Otherwise, we would have $\langle G + 0 \cdot t^{\deg{G} + 1} \rangle > \langle G\rangle$, which is absurd.} that $\langle 0 \rangle = 0$, and we assume that our ordering always satisfies this property. 
It is clear that the map $\La \cdot \Ra$ is a bijection from $\mathbb{F}_q[t]$ to $\mathbb{N}\cup \{0\}$. Thus it defines a total order $\prec$ on $\mathbb{F}_{q}[t]$ by setting $A\prec B$ if $\La A\Ra <\La B\Ra$, which is a lexicographic order on $\mb{F}_q[t]$ (and thus a natural one to use).

We remark that Liu and Wooley confined themselves to the lexicographic order (denoted $\langle \cdot \rangle_{\xi}$) that arises from ordering $\mathbb{F}_q$ as $0,\xi^0,\ldots, \xi^{q-1}$, where $\xi$ is a fixed generator of $\mathbb{F}_q^{\times}$. Our results apply to this ordering as well as to any other ordering $\langle \cdot \rangle$ with $\langle 0 \rangle = 0$.

Answering a question of Liu and Wooley\footnote{Personal communication; Oberwolfach 2019.}, we are able to show that when one uses the ordering given by $\La \cdot \Ra$, the Erd\H{o}s discrepancy conjecture holds for all completely multiplicative sequences. Thus, defining the \emph{lexicographic discrepancy} as
\begin{align*}
\mathcal{L}_f:=\sup_{\substack{D \in \mathcal{M} \\ N\geq 1}}\Big|\sum_{\substack{G\in \mathcal{M}\\\La G \Ra \leq N}}f(DG)\Big|,    
\end{align*}
we will prove the following.

\begin{thm}[Lexicographic discrepancy of completely multiplicative sequences is always infinite]\label{thm_lexicographic} For any completely multiplicative sequence $f:\mathcal{M}\to S^1$, we have $\mathcal{L}_f=\infty$.
\end{thm}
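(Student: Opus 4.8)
The plan is to argue by contradiction, in three stages: an elementary reduction, a rigidity step forcing $f$ into a structured family, and an explicit computation showing that those structured functions nevertheless have infinite lexicographic discrepancy. Suppose $\mathcal{L}_f<\infty$ and write $B:=\mathcal{L}_f$. Since $f$ is completely multiplicative, $f(DG)=f(D)f(G)$ with $|f(D)|=1$, so the supremum over $D$ is vacuous and $\mathcal{L}_f=\sup_{N\geq1}|T(N)|$ with $T(N):=\sum_{G\in\mathcal{M},\,\La G\Ra\leq N}f(G)$; thus $|T(N)|\leq B$ for all $N$. Two observations follow. First, since $\langle\cdot\rangle$ lists all monic polynomials of smaller degree before those of larger degree, each set $\{G\in\mathcal{M}:\deg{G}\leq n\}$ is itself a lexicographic initial segment, so $|\sigma_f(n)|\leq B$ for all $n$; in particular $\mathcal{D}_f<\infty$. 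Second, $\La tG\Ra=q\La G\Ra$ for monic $G$ (multiplying by $t$ inserts a zero in the lowest base-$q$ digit), so decomposing $\mathcal{M}$ according to the exact power of $t$ dividing each element yields the functional equation $T(N)=\sum_{j\geq0}f(t)^{j}\,\Sigma(\lfloor N/q^{j}\rfloor)$ with $\Sigma(M):=\sum_{\La G\Ra\leq M,\ t\nmid G}f(G)$, whence also $|\Sigma(M)|\leq 2B$ for all $M$.

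For the rigidity step, boundedness of $\sigma_f(n)$ is equivalent to the Dirichlet series $L(u,f)=\sum_{G\in\mathcal{M}}f(G)u^{\deg{G}}=\prod_{P}(1-f(P)u^{\deg{P}})^{-1}$ being analytic on $|u|<1$ with $|L(u,f)|\ll(1-|u|)^{-1}$, i.e.\ $f$ exhibits far more than square-root cancellation. Feeding this into the pretentious/Hal\'asz-type machinery over $\mathbb{F}_q[t]$ underlying Theorem~\ref{thm_edpff} forces $f$ to be pretentious to the function $G\mapsto\chi(G)\xi(G)e^{2\pi i\theta\deg{G}}$ for some primitive Dirichlet character $\chi$, short interval character $\xi$, and $\theta\in[0,1]$; and since genuinely bounded (not merely small) partial sums preclude a pretentious defect at infinitely many primes, this upgrades to the statement that $f$ agrees off finitely many primes with a modification of that function --- i.e.\ $f$ is a modified Dirichlet character in the sense of Definition~\ref{def_modified}, twisted by a short interval character $\xi$ and by the degree-twist $G\mapsto e^{2\pi i\theta\deg{G}}$.

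It remains to contradict $\mathcal{L}_f<\infty$ for such $f$, and this is where the lexicographic ordering earns its keep: unlike $\{\deg{G}\leq n\}$ or the short intervals $I_H(G_0)$, a segment $\{\La G\Ra\leq N\}$ contains \emph{unequal} numbers of residue classes modulo a fixed $Q$ for generic $N$, the bias being governed by the low-order base-$q$ digits of $N$ --- because, when $Q$ is a power of $t$, the reduction $G\bmod Q$ is a function of $\La G\Ra\bmod q^{\deg{Q}}$ that runs through a fixed odometer pattern as $\La G\Ra$ increases, while the slowly varying twists by $\xi$ and by $G\mapsto e^{2\pi i\theta\deg{G}}$ (constant on degree blocks and, for $\xi$, depending only on top coefficients) do not disturb this. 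Thus, when the relevant modulus is $t^{k}$, the bounded quantity $\Sigma(M)$ equals, up to slowly-varying and $O(1)$ corrections, a fixed \emph{non-constant} function $\Phi$ of $M\bmod q^{k}$ (non-constant, else $\sigma_f$ would be unbounded), so the functional equation becomes $T(N)=\sum_{j}f(t)^{j}\,\Phi(\text{the }k\text{ base-}q\text{ digits of }N\text{ starting at position }j)+O(1)$; choosing the digits of $N$ to be long runs realizing two distinct values of $\Phi$ --- the analogue of the base-$p$ repunit in the Borwein--Choi--Coons example over $\mathbb{Z}$ --- and using $\sum_{0\leq\ell<d}f(t)^{\ell}=0$ when $f(t)$ is a primitive $d$-th root of unity (or Weyl equidistribution of $(f(t)^{j})_{j}$ otherwise) forces $|T(N)|\gg\log_{q}N$, the desired contradiction. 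When the modulus is a prime power $P^{k}$ with $P\neq t$ one argues in the same spirit but directly: the residues $G\bmod P^{k}$ along $\{\La G\Ra\leq N\}$ are distributed with a bias that is an explicit non-trivial function of the low-order digits of $N$, and summing $f$ against it (peeling off powers of $P$ through ordinary, not lexicographic, ranges) again produces partial sums of size $\asymp\log_{q}N$; when $\deg{P}=1$, say $P=t-a$, one may alternatively conjugate by the ring automorphism $t\mapsto t+a$, which carries $P$ to $t$ and distorts $\langle\cdot\rangle$ by only a fixed unipotent change of the digit coordinates.

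The main obstacle is this last case --- the step for a modulus $P^{k}$ with $P\neq t$: only multiplication by powers of $t$ respects $\langle\cdot\rangle$, so the clean recursion is unavailable, and one must instead analyze the joint distribution of $\big(\La G\Ra,\ G\bmod P^{k}\big)$ over lexicographic initial segments directly and extract from it a bias strong enough to force logarithmic growth. A secondary technical point is making the rigidity step deliver genuine equality (off finitely many primes) with a modified structured function rather than mere pretentious closeness, which is precisely where the boundedness --- not just smallness --- of the lexicographic partial sums must be used.
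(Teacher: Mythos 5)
Your opening reductions are sound: the observation that $D$ is vacuous, the deduction $\mathcal{D}_f<\infty$, and the recursion $T(N)=\sum_{j\geq0}f(t)^{j}\,\Sigma(\lfloor N/q^{j}\rfloor)$ are all correct. The fatal gap is in the rigidity step. You attempt to deduce that $f$ is (a twist of) a modified Dirichlet character from the boundedness of the \emph{long} sums $\sigma_f(n)$, i.e.\ from $\mathcal{D}_f<\infty$ --- equivalently from the analyticity of $L(u,f)$ on $|u|<1$. This implication is \emph{false} in $\mathbb{F}_q[t]$: Proposition~\ref{prop_bddlf_unbddsf} exhibits uncountably many completely multiplicative $f:\mathcal{M}\to\{-1,+1\}$ with $\mathcal{D}_f<\infty$ that are nothing like modified characters, and indeed the very point of the paper is that $\mathcal{D}_f$ is too weak a quantity to feed into Hal\'asz-type machinery. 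The step you are missing is the paper's key structural observation that lexicographic intervals \emph{refine} short intervals: if $\widetilde{G_0}\in I_H(G_0)$ is the unique element divisible by $t^H$, then $I_H(G_0)=\{G\in\mathcal{M}: \langle\widetilde{G_0}\rangle\leq\langle G\rangle<\langle\widetilde{G_0}\rangle+q^H\}$, so a uniform bound on lexicographic initial segments yields, by differencing, a uniform bound on all short interval sums, i.e.\ $\mathcal{S}_f<\infty$. Only then can Theorem~\ref{thm_edpff} be invoked to conclude $f=\widetilde{\chi}\,\xi\, e_\theta$ with $\widetilde{\chi}$ a modified character of \emph{prime power} modulus. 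Without routing through $\mathcal{S}_f$, your rigidity step simply does not go through.

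Your third stage is also materially incomplete in the case $P\neq t$, which you yourself flag as the ``main obstacle.'' The odometer picture you describe works cleanly only for modulus $t^k$, since only multiplication by $t$ is compatible with the base-$q$ digit structure; for $P\neq t$ the paper does \emph{not} analyze the joint distribution of $(\langle G\rangle, G\bmod P^k)$ directly, but instead uses a Frobenius/Euler device: picking $n_k=m_k\phi(P^r)+a$ with $m_k$ a $q$-power forces $t^{n_k-a}-1=(P^vG_0)^{m_k}$ to be extremely divisible by $P$, so that $\widetilde{\chi}(t^{n_k}+G)$ simplifies on the relevant range, and the argument then bifurcates on whether $\widetilde{\chi}(P)$ is a root of unity ($\alpha\in\mathbb{Q}$, handled via a generating-function identity and GRH) or not ($\alpha\notin\mathbb{Q}$, handled via equidistribution). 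Your suggested shortcut for $\deg P=1$ --- conjugate by $t\mapsto t+a$ --- does not yield ``only a fixed unipotent change of the digit coordinates'': the induced map on coefficient vectors is a different upper-triangular linear map on each degree block, so it does not intertwine the lexicographic order $\langle\cdot\rangle$ with another fixed lexicographic order, and the recursion for modulus $t^k$ does not transport. In short, the plan has the right shape (reduce to structured $f$, then contradict via a digit recursion), but the two hinges you lean on --- $\mathcal{D}_f<\infty\Rightarrow$ modified character, and the $P\neq t$ reduction --- both fail as stated, and the first one is a genuine misconception rather than a technical lacuna.
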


Thus, for any completely multiplicative function $f:\mc{M}\to S^1,$ the lexicographically ordered partial sums of $f$ satisfy
\[\sup_{N\geq 1}\Big|\sum_{\substack{G\in \mathcal{M}\\\La G \Ra \leq N}}f(G)\Big|=\infty.\] 
We conclude this subsection by mentioning that the lexicographic ordering appears to be a natural ordering also for several other classical problems over $\mathbb{F}_q[t]$ (in particular, for those where partial summation plays a role). 

\subsection{Long sum discrepancy.}

Having formulated our results for $\mathcal{L}_f$ and $\mathcal{S}_f$, we revisit the long sum discrepancy $\mathcal{D}_f$ to study what can be said about its boundedness. 
We provide a classification of all \emph{modified characters} that have bounded $\mc{D}_f$; these functions are defined as follows.

\begin{def1}[Modified characters]\label{def_modified}  
We call a function $f:\mc{M}\to S^1$ a \emph{modified character} if $f$ is completely multiplicative and for some primitive Dirichlet character $\chi\pmod Q$ of some modulus $Q\in \mathcal{M}$ we have $f(P)=\chi(P)$ for all primes $P\nmid Q$, and otherwise $f(P) \in S^1$ for all $P\mid Q$. We also define modified characters on $\mathbb{N}$ analogously.
\end{def1}

 In the integer setting, the class of modified characters contains the class of functions for which Borwein, Choi and Coons~\cite{bcc} showed unboundedness of discrepancy. Indeed, they considered completely multiplicative functions $f:\mathbb{N}\to \{-1,+1\}$ such that for some prime $p$ we have $f(p')=\chi_{p}(p')$ for all $p'\neq p$, where $\chi_p$ is the Legendre symbol $\pmod p$, analogizing the function $f_3$ constructed above. We note that such functions are significantly easier to work with, since the value of $\sum_{n\leq N}f(n)$ is easy to compute given the base $p$ expansion of $N$.  As soon as one studies modified characters to composite moduli, matters are more complicated and a direct computation of the partial sums (both in the integer case and in the function field case) appears very difficult, requiring control of the digital expansion of $N$ in (at least) two different bases simultaneously.  

We prove the following characterization for the discrepancy of modified characters, where $\omega(Q)$ stands for the number of distinct prime divisors of a polynomial $Q\in \mathbb{F}_q[t]$ and $v_2(n)$ is the $2$-adic valuation of $n$.   

\begin{cor}\label{cor_lsdisc1}
Let $f : \mc{M} \to \{-1,+1\}$ be a modified character associated with a primitive character of modulus $Q$. Then $\mathcal{D}_f<\infty$ if and only if one of the following holds:

\begin{enumerate}[label=(\roman*)]
\item  $\omega(Q)=1$. 

\item $\omega(Q) = 2$, and (up to permutation) the primes $P_1,P_2$ dividing $Q$ satisfy:
\begin{itemize}
\item $f(P_1) =-1,$ $f(P_2)=1$, and 
\item $v_2(\deg{P_1}) \geq v_2(\deg{P_2})$.
\end{itemize}
\item $\omega(Q) = 3$, and (up to permutation) the primes $P_1,P_2,P_3$ dividing $Q$ satisfy: 
\begin{itemize}
\item $f(P_1) = f(P_2) = -1$ and $f(P_3) = 1$,
\item $v_2(\deg{P_1}) \neq v_2(\deg{P_2})$, and 
\item $v_2(\deg{P_j}) \geq v_2(\deg{P_3})$ for $j = 1,2$.
\end{itemize}
\end{enumerate}
\end{cor}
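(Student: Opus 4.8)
Since $f$ is completely multiplicative with $|f(D)|=1$, we have $\sum_{\deg{G}\le N}f(DG)=f(D)\sigma_f(N)$, so $\mc{D}_f=\sup_{N\ge 1}|\sigma_f(N)|$, and the task is to decide for which modified characters the partial sums $\sigma_f(N)$ stay bounded. The plan is to extract this from the generating function: writing $Q=\prod_{i=1}^{r}P_i^{a_i}$ with $r=\omega(Q)$ and factoring the Euler product (using $\chi(P_i)=0$), one obtains
\[
\sum_{N\ge 0}\sigma_f(N)y^N=\frac{1}{1-y}\sum_{G\in\mc{M}}f(G)y^{\deg{G}}=\frac{L(y,\chi)}{(1-y)\prod_{i=1}^{r}\bigl(1-f(P_i)\,y^{\deg{P_i}}\bigr)},
\]
where $L(y,\chi)=\sum_{G\in\mc{M}}\chi(G)y^{\deg{G}}$ is the $L$-polynomial of $\chi$, of degree at most $\deg{Q}-1$. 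A rational function regular at $0$ has bounded Taylor coefficients exactly when, in lowest terms, its denominator has no zero in $|y|<1$ and only simple zeros on $|y|=1$. By the Riemann hypothesis for curves the zeros of $L(y,\chi)$ have modulus $q^{-1/2}$ apart from finitely many \emph{trivial} zeros, which lie at roots of unity, whereas every zero of $(1-y)\prod_i(1-f(P_i)y^{\deg{P_i}})$ is a root of unity. Hence $\mc{D}_f<\infty$ if and only if, for every root of unity $\zeta$,
\[
\Bigl([\zeta=1]+\#\{\,i:\ f(P_i)\,\zeta^{\deg{P_i}}=1\,\}\Bigr)\ -\ \operatorname{ord}_{y=\zeta}L(y,\chi)\ \le\ 1.
\]

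It remains to compute the two quantities on the left and unwind the inequality. The pole order is elementary once one observes that, because $f(P_i)\in\{-1,+1\}$, a root of unity $\zeta$ can solve $\zeta^{d}=\zeta^{d'}=-1$ simultaneously only when $v_2(d)=v_2(d')$, and can solve $\zeta^{d}=-1$ together with $\zeta^{d'}=1$ only when $v_2(d)<v_2(d')$; this is precisely the mechanism that brings the hypotheses on $v_2(\deg{P_i})$ into play. For the trivial zeros, the required input --- furnished by the proof of Theorem~\ref{thm_lsdisc_genchar} --- is the exact location and multiplicity of the zeros of $L(y,\chi)$ on the unit circle: for the real primitive characters occurring here, $L(y,\chi)$ acquires a simple zero at $y=1$ precisely when $\chi$ is trivial on $\mathbb{F}_q^{\times}$, which, writing $\chi$ as a product of quadratic characters of the primes dividing $Q$, depends only on the parities of the $\deg{P}$ and is automatic in characteristic $2$ (the low-characteristic phenomenon recorded in the introduction, which is why a larger family of $f$ has bounded discrepancy when $q$ is even). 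Feeding these facts into the criterion reduces everything to a finite case analysis in $r=\omega(Q)$: for $r\in\{1,2,3\}$ one checks that every residual pole is simple exactly under conditions (i)--(iii), whereas for any configuration excluded by (i)--(iii) --- in particular for $r\ge 4$, and for the disallowed sign patterns or $v_2$-orderings when $r\le 3$ --- one exhibits an uncancellable double (or higher-order) pole at some root of unity. The case $r=1$ is essentially the Borwein--Choi--Coons computation transported to $\mathbb{F}_q[t]$, where the ``logarithmic drop'' makes the sums bounded rather than logarithmically growing.

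The step I expect to be the genuine obstacle is the second one: determining the exact location and order of all trivial zeros of $L(y,\chi)$ on the unit circle --- which requires careful attention to the behavior of $\chi$ at the place at infinity and to the anomalies of characteristic $2$ --- and then verifying that the resulting pole/zero bookkeeping reproduces (i)--(iii) on the nose, including the ``up to permutation'' clauses and the precise inequalities among the $v_2(\deg{P_i})$. By contrast, the generating-function identity and the reduction to a statement about roots of unity are routine.
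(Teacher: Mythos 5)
Your route is the same as the paper's: the paper likewise passes to the generating function $L(y,\chi)(1-y)^{-1}\prod_{P\mid Q}(1-f(P)y^{\deg{P}})^{-1}$ (via Lemma~\ref{lem_aux} and a contour integral), uses GRH over function fields to locate the zeros of $L$, converts boundedness of the partial sums into simplicity of the poles on $|y|=1$ (Theorem~\ref{thm_lsdisc_genchar}), and then runs exactly the $v_2$ case analysis you sketch, via the observation~\eqref{eqq21} about common solutions of $\zeta^m=\pm 1$ and $\zeta^n=\pm 1$. Your boundedness criterion for Taylor coefficients of a rational function and the two $v_2$ facts are correct.

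The gap is the step you yourself deferred, and it is fatal rather than routine: the pole/zero bookkeeping at $\zeta=1$ does not reproduce (i)--(iii). At $\zeta=1$ your criterion reads $\#\{i:\ f(P_i)=1\}\le \operatorname{ord}_{y=1}L(y,\chi)$, and, as you correctly note, that order is $1$ when $\chi$ is trivial on $\mathbb{F}_q^{\times}$ and $0$ otherwise. For odd $q$ and $\chi$ nontrivial on $\mathbb{F}_q^{\times}$ the right-hand side is $0$, which is incompatible with (ii) and (iii) (each mandates $f(P_j)=1$ for some $j$) and with (i) whenever $f(P)=1$. This is not an artefact of your setup: take $q=3$, $Q=t$, $\chi$ the quadratic character modulo $t$, and $f(t)=1$; then $\sum_{G\in\mc{M}_n}f(G)=f(t)^n=1$ for every $n\ge 0$ (the terms with $(G,t)=1$ vanish by orthogonality once $n\ge 1$), so $\sigma_f(N)=N+1$ is unbounded even though $\omega(Q)=1$. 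Hence you cannot simply ``check that every residual pole is simple exactly under (i)--(iii)'': under your (correct) criterion the answer genuinely depends on whether $\chi$ is even, a dependence absent from (i)--(iii). The divergence from the paper happens in Lemma~\ref{lem_aux}, whose orthogonality step treats the count of $G'\in\mc{M}_{\le N-\deg{D}}$ in a fixed class $A$ modulo $Q$ as independent of $A$ when $\deg{D}\le N-\deg{Q}$; the monic representatives of degree $<\deg{Q}$ each contribute one extra element to their own class, leaving behind a term $L(1,\chi)\sum_{\textnormal{rad}(D)\mid Q,\ \deg{D}\le N-\deg{Q}}f(D)$ --- exactly the $(1-y)^{-1}$ pole you are tracking --- which vanishes only when $L(1,\chi)=0$. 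Until you resolve this tension (the term is not always absent), the proposal does not establish the corollary as stated.
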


We also give a complete characterization in the case where the modified character is complex-valued; here the statement perhaps surprisingly depends on whether or not a certain polynomial associated to $f$ has multiple roots.

\begin{thm}\label{thm_lsdisc_genchar}
Let $f: \mc{M} \to S^1$ be a modified character associated to a primitive character of modulus $Q$ with $\deg{Q}\geq 1$. Define the polynomial $p(z) := \prod_{P|Q} (z^{\deg{P}}-\overline{f(P)})$.\\ 
a) If all the zeros of $p$ have multiplicity $1$, then $$ \Big|\sum_{\substack{G\in \mathcal{M}\\\deg{G}\leq N}} f(G) \Big| \ll_{Q} 1.$$ 
b) If $b\geq 2$ is the highest multiplicity of a zero of $p$, then there is an increasing sequence $\{N_k\}_{k \geq 1}$ such that
$$\Big|\sum_{\substack{G \in \mathcal{M}\\\deg{G}\leq N_k}} f(G)\Big| \asymp_Q N_k^{b-1}.$$
\end{thm}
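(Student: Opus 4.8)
The plan is to reduce the statement to the analysis of the poles, on the unit circle, of a single rational function. Set $F(u):=\sum_{G\in\mc{M}}f(G)u^{\deg{G}}$. By complete multiplicativity and the Euler product, $F(u)=\prod_{P}(1-f(P)u^{\deg{P}})^{-1}=L(u,\chi)\cdot\prod_{P\mid Q}(1-f(P)u^{\deg{P}})^{-1}$, where $L(u,\chi)=\sum_{G\in\mc{M}}\chi(G)u^{\deg{G}}$ is the Dirichlet $L$-function of $\chi$ (using $f(P)=\chi(P)$ for $P\nmid Q$ and $\chi(P)=0$ for $P\mid Q$). Since $|f(P)|=1$ we have $1-f(P)z^{\deg{P}}=-\overline{f(P)}(z^{\deg{P}}-\overline{f(P)})$, so $\prod_{P\mid Q}(1-f(P)u^{\deg{P}})=c\, p(u)$ for a nonzero constant $c$. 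As $\sum_{\deg{G}\le N}f(G)$ is exactly the coefficient of $u^{N}$ in $F(u)/(1-u)=L(u,\chi)/\big(c(1-u)p(u)\big)$, the whole problem becomes one about the poles of this rational function.

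The next step is to locate those poles. Because $\chi$ is primitive of modulus $Q$, $L(u,\chi)$ is a polynomial of degree $\deg{Q}-1$ (orthogonality of characters), and by the Riemann Hypothesis for function fields (Weil) all of its zeros lie on $|u|=q^{-1/2}$, apart from a possible trivial zero at $u=1$ (present iff $\chi$ is trivial on $\mb{F}_q^{\times}$). Hence $F(u)/(1-u)$ has no poles off the unit circle, and on $|u|=1$ its poles occur among the zeros of $p$ together with the point $u=1$: at a zero $\rho\ne 1$ of $p$ the pole has order equal to the multiplicity of $\rho$ in $p$ (the numerator does not vanish there), while the point $u=1$ needs a separate, careful treatment since its pole order depends on whether $L(u,\chi)$ vanishes at $u=1$; this bookkeeping is precisely what turns the pole data into the stated dichotomy for $p$. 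In particular, when every zero of $p$ is simple every pole of $F(u)/(1-u)$ is simple, and when $b\ge 2$ is the largest multiplicity of a zero of $p$, the largest pole order of $F(u)/(1-u)$ is $b$ --- here one uses that each factor $z^{\deg{P}}-\overline{f(P)}$ is separable, so multiple zeros of $p$ arise only from coincidences between distinct primes dividing $Q$.

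Having located the poles, I would expand $F(u)/(1-u)$ in partial fractions, as a polynomial plus $\sum_{\rho}\sum_{j=1}^{\mu_\rho}a_{\rho,j}(1-u/\rho)^{-j}$ over the poles $\rho$ of orders $\mu_\rho$, with leading coefficients $a_{\rho,\mu_\rho}\ne 0$. Using $[u^{N}](1-u/\rho)^{-j}=\binom{N+j-1}{j-1}\rho^{-N}$ and discarding the polynomial part (irrelevant once $N$ is large), one gets, for all large $N$, $\sum_{\deg{G}\le N}f(G)=\sum_{\rho}R_\rho(N)\rho^{-N}$, where $R_\rho$ is a polynomial of degree $\mu_\rho-1$ with leading coefficient a nonzero multiple of $a_{\rho,\mu_\rho}$. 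In case (a) every $\mu_\rho=1$, so this is a bounded $\C$-linear combination of the bounded sequences $(\rho^{-N})_N$, giving $|\sum_{\deg{G}\le N}f(G)|\ll_Q 1$. In case (b), isolating the poles of order $b$, one obtains $\sum_{\deg{G}\le N}f(G)=N^{b-1}g(N)+O_Q(N^{b-2})$ with $g(N)=\sum_{\rho:\mu_\rho=b}c_\rho\rho^{-N}$, all $c_\rho\ne 0$; this gives the upper bound $\ll_Q N^{b-1}$ at once.

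The main obstacle is the matching lower bound in (b), namely producing $N_k\to\infty$ with $|g(N_k)|\gg_Q 1$. The sequences $N\mapsto\rho^{-N}$ for distinct $\rho\in S^1$ are distinct characters of $\Z$, hence linearly independent on every infinite set of $N$, so $g$ is not identically zero. If $f$ is real-valued (or, more generally, each $f(P)$ is a root of unity), then, since $\rho^{\deg{P}}=\overline{f(P)}$, each $\rho$ is a root of unity, $g$ is periodic, and $|g(N)|\gg 1$ on an arithmetic progression, which supplies the $N_k$. For general $f:\mc{M}\to S^1$ the $f(P)$, and hence the $\rho$, need not be roots of unity, and I would instead invoke simultaneous Diophantine approximation (Weyl equidistribution): the orbit $\{(\rho^{-N})_\rho:N\ge 1\}$ equidistributes in the closed subgroup $H$ of the ambient torus that it generates, the characters $N\mapsto\rho^{-N}$ remain orthonormal over $H$ (they are still distinct there), so $\int_H|g|^2=\sum_{\rho}|c_\rho|^2>0$; thus $|g|>\varepsilon$ for a set of $N$ of positive lower density for some $\varepsilon>0$, and choosing $N_k$ from that set completes the proof that $\Big|\sum_{\deg{G}\le N_k}f(G)\Big|\asymp_Q N_k^{b-1}$.
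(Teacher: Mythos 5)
Your strategy is essentially the paper's: both start from the generating-function identity
$$
\sum_{n\geq 0}\Big(\sum_{G\in\mc{M}_n}f(G)\Big)u^n \;=\; L(u,\chi)\prod_{P\mid Q}\big(1-f(P)u^{\deg{P}}\big)^{-1},
$$
divide by $(1-u)$ to capture the partial sums, take a partial-fraction decomposition of $\prod_{P\mid Q}(1-f(P)z^{\deg{P}})^{-1}$, and extract the $N$-th coefficient from the pole data on $|u|=1$. The paper reaches this same expression through Lemma~\ref{lem_aux} and a residue-theorem contour integral, but the content is identical, and your Euler-product framing is cleaner. The one substantive variation is your lower bound in part (b): you use equidistribution of $(\rho^{-N})_\rho$ in the closed subgroup of the torus that it generates together with orthogonality of the distinct exponentials there, whereas the paper applies Dirichlet's simultaneous approximation theorem to produce $N_r$ with $\lambda_j^{N_r}\approx 1$ and then uses invertibility of the Vandermonde matrix on $\lambda_i,\ldots,\lambda_J$. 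Both are valid ways to produce the required $N_k$.

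The step you set aside as ``bookkeeping'' at $u=1$ is, however, a genuine gap, and it invalidates the two assertions built on it, namely that ``when every zero of $p$ is simple every pole of $F(u)/(1-u)$ is simple'' and that the largest pole order of $F(u)/(1-u)$ equals $b$. The $L$-function $L(u,\chi)$ vanishes at $u=1$ precisely when $\chi$ is trivial on $\mb{F}_q^{\times}$; if $\chi$ is nontrivial on $\mb{F}_q^{\times}$ one has
$$
L(1,\chi)\;=\;\sum_{\substack{A\in\mc{M}\\ \deg{A}<\deg{Q}}}\chi(A)\;\neq\;0,
$$
so the factor $(1-u)^{-1}$ contributes a genuine simple pole at $u=1$ \emph{on top of} whatever multiplicity $1$ has as a root of $p$. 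Concretely, take $q=3$, $Q=t$, $\chi$ the nontrivial character modulo $t$, and $f(t)=1$: then $p(z)=z-1$ has only a simple zero, yet $L(u,\chi)\equiv 1$, so $F(u)/(1-u)=(1-u)^{-2}$ and $\sum_{\deg{G}\leq N}f(G)=N+1$, which is unbounded. So it is not true that the bookkeeping at $u=1$ ``turns the pole data into the stated dichotomy for $p$.'' To make the argument go through you would have to treat $(1-u)^{-1}$ as an explicit extra linear factor in the denominator, track whether it is cancelled by a trivial zero of $L(u,\chi)$ (which happens iff $\chi$ is trivial on $\mb{F}_q^\times$), and formulate the dichotomy in terms of the resulting total multiplicities rather than those of $p$ alone. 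As written, the reduction of the problem to the multiplicity profile of $p$ does not hold.
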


It is a natural question to ask for classification of all $\pm 1$-valued multiplicative functions in $\mathcal{M}$ with $\mathcal{D}_f<\infty$. Proposition~\ref{prop_bddlf_unbddsf} and Corollary~\ref{cor_lsdisc1} imply that in the case of general functions this appears all but impossible, whereas for the natural class of modified characters we can give a complete characterization.

Theorem~\ref{thm_lsdisc_genchar} is also related to Conjecture~\ref{conj_extremal}. Namely, one expects that for a completely multiplicative function $f:\mathbb{N}\to S^1$ there is an increasing sequence $\{N_k\}_{k\geq 1}$ such that
\begin{align}\label{eqq9}
\Big|\sum_{n\leq N_k}f(n)\Big|\gg \log N_k.    
\end{align}
As mentioned, from~\cite{bcc} it follows that~\eqref{eqq9} holds whenever $f$ is a modified character (with prime power modulus). Theorem~\ref{thm_lsdisc_genchar} verifies an analogous statement over function fields, namely that when $f: \mathcal{M} \ra S^1$ is a modified character for which $\mathcal{D}_f = \infty$, we have 
\begin{align*}
\Big|\sum_{\substack{G\in \mc{M}\\\deg{G}\leq N_k}}f(n)\Big|\gg \log(q^{N_k})\gg N_k.  
\end{align*}

Theorem~\ref{thm_lsdisc_genchar} also reveals an interesting phenomenon about the \emph{spectrum} of different growth rates of discrepancy (and once again confirms the ``logarithmic" drop). It shows that the discrepancy of a modified character on $\mathbb{F}_q[t]$ always grows like $N^d$ for some number $d\in \mathbb{N}\cup \{0\}$; there are no other possible growth rates. It would be interesting to say something about the spectrum of discrepancies for general multiplicative functions on $\mathbb{F}_q[t]$ (or even on $\mathbb{Z}$); however, this seems extremely difficult since in the non-pretentious case the known lower bound on the growth of the discrepancy is very weak (see Section~\ref{sub: proofideas} for relevant definitions).

\section{Strategy of Proofs}\label{sub: proofideas}

As in Tao's resolution of the Erd\H{o}s discrepancy problem~\cite{TaoEDP}, our proofs naturally split into two main parts: the case of \emph{non-pretentious} multiplicative functions and the case of \emph{pretentious} multiplicative functions 
(see Figure~\ref{fig:1}). At various points we are forced to significantly deviate from the treatment in the number field case. 

By pretentious functions we mean multiplicative functions $f:\mathcal{M}\to S^1$ such that for some character $\widetilde{\chi}:\mathcal{M}\to \mathbb{C}$ of bounded conductor the pretentious distance between $f$ and $\widetilde{\chi}$ is bounded (the Granville--Soundararajan pretentious distance can be generalized to function field setting; see~\eqref{eqq36} below). In the integer setting, the relevant characters would be of the form $\widetilde{\chi}(n)=\chi(n)n^{it}$, so a Dirichlet character $\chi$ times an Archimedean character $n\mapsto n^{it}$, for some $t \in \mb{R}$. A key technical point in this paper is that these characters are \emph{not} sufficient for understanding the behavior of the short sum discrepancy.  We thus need a larger set of characters, introduced in the following definition.  

\begin{def1}[Characters in function fields]\label{defn2} A multiplicative function $\chi:\mathcal{M}\to \mathbb{C}$ which is not identically zero is called a \emph{Dirichlet character} of modulus $Q\in \mathcal{M}$ if $\chi(M+Q)=\chi(M)$ for all $M\in \mathcal{M}$ and $\chi(M)=0$ whenever $(Q,M)\neq 1$. We say that $\chi\pmod Q$ is \emph{primitive} if there is no divisor $Q'\mid Q$, $\deg{Q'}<\deg{Q}$ such that for some Dirichlet character $\chi'\pmod{Q'}$ we have $\chi(M)=\chi'(M)$ whenever $(M,Q)=1$. We say that $\chi\pmod Q$ is \emph{principal} if $\chi(M)=1$ whenever $(M,Q)=1$. 

A function $e_{\theta}\mathcal{M}\to S^1$ of the form $e_{\theta}(M):=e(\theta \deg{M})$ for $\theta\in [0,1]$ is called an \emph{Archimedean character}. 

A multiplicative function $\xi:\mathcal{M}\to \mathbb{C}$ which is not identically zero is called a \emph{short interval character} if there exists $\nu\geq 0$ such that $\xi(A)=\xi(B)$ whenever the $\nu+1$ highest degree coefficients of $A$ and $B$ agree. The smallest such $\nu$ is called the \emph{length} $\textnormal{len}(\xi)$ of $\xi$.
\end{def1}

Any of the characters above are multiplicative. The Archimedean characters play much the same role as the characters $n\mapsto n^{it}$ on $\mathbb{N}$. The notion of short interval characters was introduced by Hayes~\cite{Hayes}, and it has no integer analogue.

\subsection{The non-pretentious case} \label{subsec:NonPret}

In the non-pretentious case, the main ingredient that we need is a function field version of Tao's result on two-point logarithmic correlations of multiplicative functions. This was established by the authors in~\cite{KMT_FFChowla}.

\begin{thm} [Two-point logarithmic Elliott conjecture in function fields,~\cite{KMT_FFChowla}]\label{LogEllFF1}
Let $B \in \mb{F}_q[t] \bk \{0\}$ be fixed. Let $f_1,f_2: \mb{F}_q[t] \to \mb{U}$ be multiplicative. Let $N$ be large, and assume that $f_1$ satisfies the non-pretentiousness condition
\begin{align*}
\min_{\substack{M \in \mc{M} \\ \deg{M} \leq W}} \min_{\psi \hspace{-0,1cm}\pmod{M}} \min_{\substack{\xi\,\, \textnormal{short} \\ \textnormal{len}(\xi) \leq N}}\min_{\theta\in [0,1]} \sum_{\substack{P \in \mc{P} \\ \deg{P} \leq N}} \frac{1-\textnormal{Re}(f_1(P)\bar{\psi}(P)\bar{\xi}(P)e_{\theta}(P))}{q^{\deg{P}}} \xrightarrow{N\to \infty}\infty
\end{align*}
for every fixed $W\geq 1$.
Then
\begin{align*}
\frac{1}{N}\sum_{\substack{G \in \mc{M}\\ \deg{G} \leq N}} q^{-\deg{G}} f_1(G)f_2(G+B) =o(1).
\end{align*}
Moreover, if $f_1$is real-valued and $q$ is odd, then the same conclusion follows provided only that
\begin{align*}
\min_{\substack{M \in \mc{M} \\ \deg{M} \leq W}} \min_{\psi \hspace{-0,1cm}\pmod{M}} \min_{\theta\in \{0,1/2\}} \sum_{\substack{P \in \mc{P} \\ \deg{P} \leq N}} \frac{1-\textnormal{Re}(f_1(P)\bar{\psi}(P)e_{\theta}(P))}{q^{\deg{P}}}\xrightarrow{N\to \infty}\infty.
\end{align*}
\end{thm}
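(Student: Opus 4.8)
The plan is to follow the blueprint of Tao's proof of the two-point logarithmic Elliott conjecture over $\mb{Z}$ \cite{TaoEDP,tao} and transport it to $\mb{F}_q[t]$. Write $C(N):=\tfrac1N\sum_{\deg{G}\le N}q^{-\deg{G}}f_1(G)f_2(G+B)$ for the logarithmically weighted correlation. The first move is an \emph{entropy decrement argument}. For a prime polynomial $P$ and any $G$ coprime to $P(G+B)$ — the generic case, the complement contributing a negligible error — multiplicativity gives $f_1(PG)f_2(P(G+B))=f_1(P)f_2(P)\,f_1(G)f_2(G+B)$, while dilating the variable by $P$ changes the logarithmically weighted sum only by the factor $q^{-\deg{P}}$ and a harmless shift of the translation parameter inside $f_2$, up to an error that decays with the scale — exactly the situation exploited by Tao over $\mb{N}$. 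I would then average this over primes $P$ of degree in a window $[H/2,H]$ — there are $\asymp q^H/H$ of them, the precise analogue of $x/\log x$ — and invoke the entropy inequality of \cite{TaoEDP}, which goes through once one substitutes the count $q^h/h$ for the number of $h$-digit objects. This produces a slowly growing scale $H=H(N)$ for which
\begin{align*}
C(N)=\mb{E}_{\,\deg{P}\in[H/2,H]}\big(f_1(P)f_2(P)\,C_P(N)\big)+o(1),
\end{align*}
with $C_P(N)$ the $P$-dilate of $C(N)$, the entropy bound forcing $C_P(N)\approx C(N)$ for almost all such $P$.

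Next I would convert this approximate invariance into a pretentiousness statement. If $|C(N)|\ge\delta$ along some sequence of $N$, the displayed identity forces $\mb{E}_{\deg{P}\in[H/2,H]}\Re(f_1(P)f_2(P))\ge1-o_\delta(1)$ for a growing sequence of scales $H$; that is, $f_1$ ``pretends'' to be $\overline{f_2}$ on primes of degree near $H$. Feeding this into a function-field Hal\'asz-- and Lipschitz-type estimate for mean values and correlations of multiplicative functions — whose proof needs only partial summation, the prime polynomial theorem, and the Tur\'an--Kubilius inequality in $\mb{F}_q[t]$, paralleling \cite{tao} — would show that $|C(N)|$ can be large only if $f_1$ lies, in the function-field pretentious metric, within bounded distance of a character $\psi\xi e_\theta$ with $\psi$ a Dirichlet character of bounded-degree modulus, $\xi$ a short interval character of length $\le N$, and $\theta\in[0,1]$. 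This is precisely the negation of the hypothesis, so $C(N)=o(1)$, which is the first claim.

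For the sharper hypothesis when $f_1$ is real-valued and $q$ is odd, the point is that a $\pm1$-valued multiplicative function can only pretend to a real-valued character, so it suffices to rule out $f_1$ pretending to a \emph{real} product $\psi\xi e_\theta$. A real-valued Archimedean character is $e_\theta$ with $\theta\in\{0,1/2\}$, and — crucially — for $q$ odd the relevant group of short interval characters on $\mc{M}$ is pro-$p$ ($p$ being the odd characteristic), hence has no nontrivial $2$-torsion, so a real-valued short interval character is trivial on $\mc{M}$; one is left with exactly the condition displayed in the second half of the statement. (For $q$ even this last step collapses, since the character group is pro-$2$ and teems with $\pm1$-valued short interval characters — the source of the extra flexibility noted after Theorem~\ref{thm_edpff}.) I expect the main obstacle to be the entropy decrement step together with the precise passage between $\sum_G q^{-\deg{G}}(\cdots)$ and its $P$-dilate: the available ``scales'' in $\mb{F}_q[t]$ are powers of $q$ rather than a continuum, so one must check that the error terms, and the subsequent Hal\'asz input, survive with no loss when the enlarged family of Dirichlet, short interval, and Archimedean characters is in play. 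Carrying this out in full is the content of \cite{KMT_FFChowla}.
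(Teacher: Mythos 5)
The paper does not actually prove this theorem: its entire ``proof'' is the citation ``This is [Theorem 1.5, KMT\_FFChowla]'', i.e.\ the result is imported wholesale from the authors' companion paper on the two-point logarithmic Elliott conjecture in function fields. Your sketch is therefore not comparable to anything in the present paper; what it does is outline, correctly at the level of strategy, the argument of that external reference: an entropy decrement step to produce approximate dilation invariance of the logarithmically weighted correlation over primes $P$ with $\deg{P}\in[H/2,H]$, followed by a Hal\'asz/pretentious inverse step showing that a large correlation forces $f_1$ to pretend to be $\psi\xi e_\theta$ for some Dirichlet character $\psi$ of bounded conductor, short interval character $\xi$, and $\theta\in[0,1]$ --- which is exactly the negation of the stated hypothesis. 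Your treatment of the real-valued, $q$ odd refinement is also sound: the triangle inequality $\mb{D}(1,\psi^2\xi^2e_{2\theta};N)\le 2\,\mb{D}(f_1,\psi\xi e_\theta;N)$ forces $\xi^2\equiv 1$ and $2\theta\equiv 0$, and since the group of short interval characters is a $p$-group for odd characteristic $p$, it has no nontrivial real-valued elements, so $\xi$ may be dropped; this matches the remark made in the paper's proof of Proposition~\ref{prop_redtoDirPret}.

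That said, be aware that as a standalone proof your proposal has substantial gaps, all concentrated where you yourself flag them: the entropy decrement argument in $\mb{F}_q[t]$ (in particular the precise bookkeeping when dilating the logarithmic weight $q^{-\deg{G}}/N$ by $P$ and shifting $B$ to $PB$), and the Hal\'asz-type inverse theorem that must accommodate the enlarged character family including Hayes' short interval characters --- this last point is genuinely nontrivial and is the reason the non-pretentiousness hypothesis in the theorem quantifies over $\xi$ with $\textnormal{len}(\xi)\le N$ rather than over bounded-length characters only. Since you explicitly defer these to \cite{KMT_FFChowla}, and the paper under review does exactly the same, there is no substantive discrepancy; just do not mistake the sketch for a proof.
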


\begin{proof} This is~\cite[Theorem 1.5]{KMT_FFChowla}
\end{proof}

\subsection{The pretentious case}

Assuming for the sake of contradiction that a completely multiplicative $f$ has finite short sum discrepancy $\mathcal{S}_f$, Theorem~\ref{LogEllFF1} can be used as in~\cite{TaoEDP}  to achieve the crucial reduction to the case in which $f$ pretends to be a twisted character $G \mapsto \chi\xi e_{\theta}(G)$, where $\chi$ is a primitive Dirichlet character of bounded conductor, $\xi$ is a short interval character and $\theta \in [0,1]$. At this point, after removing the twist $\xi e_{\theta}$ (which is essentially a triviality) we significantly deviate from Tao's analysis in~\cite{TaoEDP}. 

The first step of this different argument is a technique that allows us to pass from the pretentiousness condition
$$
\sum_{P \in \mc{P}} \frac{1-\text{Re}(f(P)\bar{\chi}(P))}{q^{\deg{P}}} < \infty
$$
to the far more restrictive hypothesis
\begin{equation}\label{eq_findiff}
|\{P \in \mc{P} : f(P) \neq \chi(P)\}| < \infty.
\end{equation}
To accomplish this reduction we use a technical device, which we call the ``rotation trick." This general trick played a crucial role in the recent work~\cite{kmt-ruzsa} on multiplicative functions over the integers.

After this reduction to $f$ satisfying~\eqref{eq_findiff}, it remains to treat the case where $f$ is a \emph{modified character} (see Definition~\ref{def_modified} above). At this point it should be noted that the corresponding argument in~\cite{TaoEDP} (see Section 4, from (4.8) onwards) is insufficient, due essentially to the fact that for \emph{any} $H > \deg{Q}$, orthogonality implies
\[
\sum_{\deg{M} < H} \chi(M) = 0,
\]
for any non-principal Dirichlet character modulo $Q$ (and, as discussed above, the analysis of Borwein--Choi--Coons from~\cite{bcc} runs into serious difficulties in the case where $Q$ has several prime factors). However, a more elaborate argument using Ramanujan sums (see Proposition~\ref{prop_lrg_ssd}~below) does permit one to show that for suitable large choices of $H$, and for $N$ large in terms of $H$, one does have
$$
\max_{\substack{G_0 \in \mc{M}\\ \deg{G_0} = N}} \left|\sum_{\substack{G \in \mc{M} \\ \deg{G-G_0} < H}} f(G)\right| \gg_Q H^{(\omega(Q)-1)/2},
$$
where $\omega(Q)$ denotes the number of distinct prime factors  of the modulus $Q$ of the modified character corresponding to $f$. The logarithmic power growth rate is consistent with the discrete pattern witnessed in Theorem~\ref{thm_lsdisc_genchar}. The above enables us to show that if the number of primes at which $f$ and $\chi$ differ exceeds 1 then $\mc{S}_f = \infty$.  The remaining case in which $\omega(Q) = 1$ can be analyzed directly (because the modulus is a power of a single prime), and this is accomplished at the end of Section~\ref{sec_EDP1}, leading to the proof of Theorem~\ref{thm_edpff}.

\subsection{The lexicographic discrepancy result}
It is crucial to remark that the collection of lexicographic intervals is a \emph{refinement} of the collection of short intervals, in the sense that if $\deg{G_0} = N$ and $H < N$ and $\widetilde{G_0}$ is the unique element of $I_H(G_0)$ divisible by $t^{H}$, then we can express
\[
I_H(G_0) = I_H(\widetilde{G_0})=\{G \in \mc{M} : \langle \widetilde{G_0} \rangle \leq \langle G\rangle < \langle {\widetilde{G_0}}\rangle + q^H\}.
\]
Thus, in view of our short interval result (Theorem~\ref{thm_edpff}), a completely multiplicative function $f:\mathcal{M}\to S^1$ that is uniformly bounded on lexicographic intervals must be a modified character to prime power modulus. Our main obstacle is thus to rule out uniform boundedness of lexicographic partial sums for this class of functions, for which the analysis in short intervals is not sufficient. 

To accomplish this, we fully exploit the ``digital" structure of the lexicographic ordering to obtain a recursive relation for partial sums over $\langle G \rangle\leq N$ at a carefully chosen sequence of scales $N$. The construction is somewhat complicated, and we relegate further explanation to the proof of Proposition~\ref{prop_lexico}.

The proof strategy of Theorems~\ref{thm_edpff} and~\ref{thm_lexicographic} is visualized in the following diagram. \\
\ctikzfig{ProofDiagram}
\begin{figure}[h]
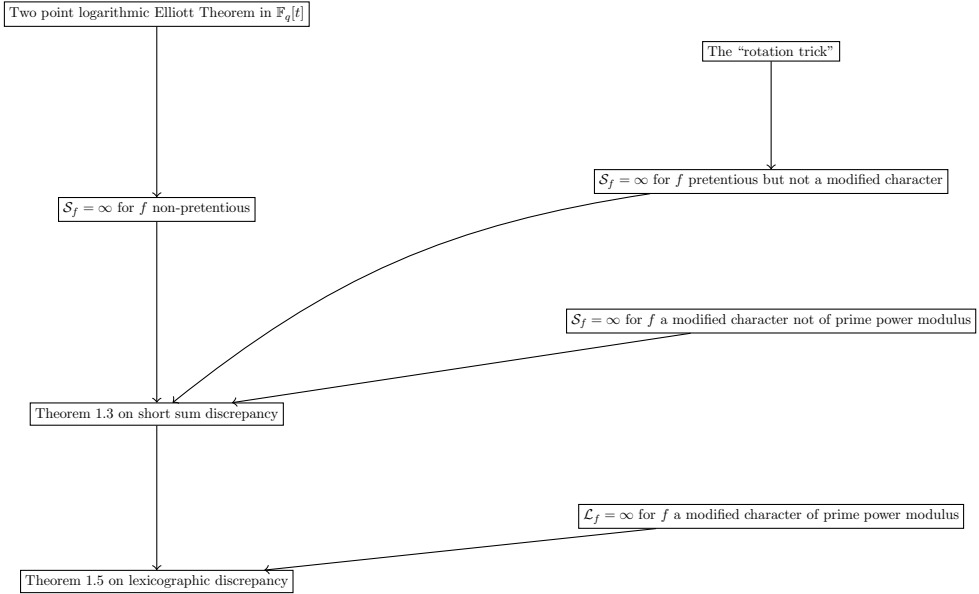
\label{fig:1}
\caption{A diagram describing the different steps of the proofs of Theorems~\ref{thm_edpff} and~\ref{thm_lexicographic}.}
\end{figure}

\subsection{The long sum discrepancy result}
The proof of Theorem~\ref{thm_lsdisc_genchar} proceeds differently than the proofs of Theorems~\ref{thm_edpff} and~\ref{thm_lexicographic}, using a generating function argument and GRH in function fields, and can be read independently.

\subsection{Structure of the Paper}
The paper is organized as follows.  In Section~\ref{sec_EDP1}, we prove the short interval discrepancy theorem (Theorem~\ref{thm_edpff}). In Section~\ref{sec_EDP2}, we establish the discrepancy theorem for lexicographic discrepancy (Theorem~\ref{thm_lexicographic}). Lastly, Section~\ref{sec_EDP3} concerns the long sum discrepancy result, Theorem~\ref{thm_lsdisc_genchar}. 

\subsection{Acknowledgments} This work began when the authors were in residence for the ``Probability in Number Theory'' Workshop at CRM in the spring of 2018, and continued in particular at the ``Sarnak's Conjecture'' workshop at AIM that fall. We would like to thank both institutions for their hospitality and for excellent working conditions. We would also like to thank Yu-Ru Liu and Trevor Wooley for thought-provoking discussions, and Andrew Granville and Maksym Radziwi\l\l \ for their encouragement. We are also grateful to the anonymous referee for helpful comments and suggestions.

The bulk of this paper was written while AM was a CRM-ISM postdoctoral fellow at the Centre de Recherches Math\'{e}matiques in Montr\'{e}al. He would like to warmly thank that institution for its support. OK would like to express his gratitude to Max Planck Institute for Mathematics (Bonn) for providing excellent working conditions and support during the preparation of this manuscript. JT was supported by a Titchmarsh Fellowship, Academy of Finland grant no. 340098, and funding from European Union’s
Horizon Europe research and innovation programme under Marie Sk\l{}odowska-Curie grant
agreement No 101058904.

\section{Notation}\label{sec:notation}

Throughout the paper, $p$ is the characteristic of $\mb{F}_q$, and $q = p^k$ for some $k \geq 1$. 

We denote by $\mc{M}$ the space of monic polynomials in $\mb{F}_q[t]$ (we omit the $q$-dependence in $\mathcal{M}$; thus, whenever $\mathcal{M}$ appears it is understood that the base field has size $q$), and by $\mc{P}$ we denote the space of monic irreducible (prime) polynomials in $\mb{F}_q[t]$. For $N \in \mb{N}$, we write $\mc{M}_N$, $\mc{M}_{\leq N}$ and $\mc{M}_{< N}$ to denote, respectively, the set of monic polynomials of degree exactly $N$, less than or equal $N$ and strictly less than $N$. Analogously, we define $\mc{P}_N$, $\mc{P}_{\leq N}$ and $\mc{P}_{<N}$ to be the corresponding sets of monic irreducible polynomials. We denote the degree of $M \in \mb{F}_q[t]$ by $\text{deg}(M)$.

Given two polynomials $F,G\in \mathbb{F}_q[t]$, not both zero, we define their greatest common divisor $(F,G)$ as the unique monic polynomial $D\in \mathcal{M}$ such that $D\mid F, D\mid G$ and such that for any $D'\in \mathcal{M}$ satisfying $D'\mid F, D'\mid G$ we have $D'\mid D$. The least common multiple $[F,G]$ of $F$ and $G$ is in turn defined by $[F,G]:= FG/(F,G)$.

Typically, $G$ will be used to denote an element of $\mc{M}$, whereas $R$ or $P$ denotes an element of $\mc{P}$ and $M$ denotes an element of $\mb{F}_q[t]$, monic or otherwise.

Given two polynomials $G_0,G \in \mc{M}$ and a parameter $H \geq 1$, we write 
$$
I_H(G_0):= \{G \in \mc{M} : \deg{G-G_0} < H\}
$$ 
to denote the short interval centred at $G_0$ of size $H$.

As usual, given $t \in \mb{R}$ we write $e(t) := e^{2\pi i t}$. Given a parameter $\theta \in [0,1]$ and a polynomial $G \in \mb{F}_q[t]$, we also write $e_{\theta}(G) := e(\theta \deg{G})$. 
Finally, given a rational function $F/G \in \mb{F}_q(t)$ with Laurent series expansion $F/G = \sum_{k = -N}^{N'} a_k t^{k}$, we define $e_{\mb{F}}(\alpha) := e((\text{tr}_{\mb{F}_q/\mb{F}_p} a_{-1}(\alpha))/p)$, where $\text{tr}_{\mb{F}_q/\mb{F}_p}$ denotes the usual field trace.

Throughout the paper, we write $\mb{U} := \{z \in \mb{C} : |z| \leq 1\}$ and $S^1 := \{z \in \mb{U} : |z| = 1\}$. Given sequences $f,g: \mc{M} \to \mb{U}$,  we define the pretentious distance between them by
\begin{align}\label{eqq36}
\mb{D}(f,g;N) := \left(\sum_{P \in \mc{P}_{\leq N}} q^{-\deg{P}}(1-\text{Re}(f(P)\bar{g}(P)))\right)^{1/2},
\end{align}
and also set
$$
\mc{D}_f(N) := \min_{\theta \in [0,1]} \mb{D}(f, e_{\theta};N)^2.
$$ 
We frequently use the pretentious triangle inequalities (see e.g.~\cite[Section 2]{KluThe}):  for any functions $f_1,f_2,f_3,f_4:\mathcal{M}\to \mathbb{U}$, we have 
\begin{align}\label{eq:pret1} \mathbb{D}(f_1,f_3;N)\leq \mathbb{D}(f_1,f_2;N)+\mathbb{D}(f_2,f_3;N)
\end{align}
and 
\begin{align}\label{eq:pret2}
\mathbb{D}(f_1f_2,f_3f_4;N)\leq \mathbb{D}(f_1,f_3;N)+\mathbb{D}(f_2,f_4;N).        
\end{align}

For $f: \mc{M} \to \mb{U}$ a 1-bounded multiplicative function, we define the Dirichlet series corresponding to $f$ by
\begin{align}\label{eqq19}
L(s,f) := \sum_{G \in \mc{M}} f(G)q^{-\deg{G}s} = \prod_{P \in \mc{P}} \sum_{k \geq 0} f(P^k)q^{-k\deg{P}s},
\end{align}
for $\textnormal{Re}(s)>1$; in this region both expressions converge absolutely.

We will sometimes write $\mu_k$ to denote the set of $k$th order roots of unity, where $k \in \mb{N}$. 

The functions $\Lambda$,  $\omega$, $\lambda$, $\mu$, $\phi$, $\textnormal{rad}$ and $\nu_P$, defined on $\mc{M}$, are the analogues of the corresponding arithmetic functions in the number field setting. Thus 
\begin{itemize}
    \item $\Lambda(G)=\deg{P}$ if $G=P^k$ for some $k\geq 1$ and  $P\in \mathcal{P}$ and $\Lambda(G)=0$ otherwise. 
    \item $\omega(G)$ is the number of distinct irreducible divisors of $G$.
    \item $\lambda:\mathcal{M}\to \{-1,+1\}$ is the completely multiplicative function with $\lambda(P)=-1$ for all $P\in \mathcal{P}$.
    \item $\mu:\mathcal{M}\to \{-1,0,+1\}$ is given by $\mu(G)=(-1)^{\omega(G)}$ for $G$  not divisible by $P^2$ for any $P\in \mathcal{P}$, and $\mu(G)=0$ otherwise. 
    \item $\phi(G)$ is the size of the finite multiplicative group $(\mathbb{F}_q[t]/G\mathbb{F}_q[t])^{\times}$.
    \item $\textnormal{rad}(G)=1$ if $G=1$ and $\textnormal{rad}(G)=P_1\cdots P_k$ if $P_1,\ldots, P_k$ are the distinct irreducible factors of $G$.
    \item $\nu_P(G)$, for $P\in \mathcal{P}$, is the largest integer $k$ such that $P^k\mid G$.
\end{itemize}
Recall also Definition~\ref{defn2} for the definitions of the various types of characters used in this paper. 

Throughout this paper, the cardinality $q$ of the underlying finite field $\mb{F}_q$ is fixed. For the sake of convenience we have chosen to omit mention of dependencies on $q$ of implicit constants in our estimates. In particular, the implicit constants in any estimate may depend on $q$ throughout this paper.

\section{The Short Sum Discrepancy} \label{sec_EDP1}

The proof of Theorem~\ref{thm_edpff} will be achieved through a series of reductions, starting with a reduction to the case of functions that pretend to be characters.

\subsection{Reduction to the pretentious case}

In this subsection, we will show that the short sum discrepancy $\mathcal{S}_f$ of $f$ is infinite whenever $f$ is non-pretentious in a suitable sense. Recall the notions of characters and pretentiousness in this context from Definition~\ref{defn2} and the notation section, respectively.

\begin{prop}\label{prop_redtoDirPret}
Let $f: \mc{M} \to S^1$ be a completely multiplicative function, and let $C > 0$. Assume that
\begin{equation}\label{CBound}
\mathcal{S}_f = \limsup_{H\to \infty } \limsup_{N \to \infty}  \max_{G_0 \in \mc{M}_{N}} \left|\sum_{G \in I_H(G_0)} g(G)\right| \leq C.
\end{equation}
Then there is a primitive Dirichlet character $\chi$ with $\textnormal{cond}(\chi) \ll_C 1$, a short interval character $\xi$ of length $\ll_C 1$ and a real number $\theta \in [0,1]$ such that $\mb{D}(g,\chi\xi e_{\theta};N) \ll_C 1$ for all $N\geq 1$. 

Moreover, in the case that $f: \mc{M} \ra \{-1,+1\}$ we may conclude, in fact, that there is a real primitive character $\chi$, a real short interval character $\xi$ and $\theta \in \{0,1/2\}$ such that $\mb{D}(f,\chi \xi e_{\theta};N) \ll_C 1$ for all $N\geq 1$. If $q$ is additionally odd, we can also say that $\xi\equiv 1$.
\end{prop}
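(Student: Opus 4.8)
The plan is to follow the template of Tao's reduction in the Erd\H{o}s discrepancy problem, but with the function field dictionary and the enlarged family of characters (Dirichlet times short interval times Archimedean) in place of $n \mapsto \chi(n)n^{it}$. The overall structure is: (1) use the bounded short sum hypothesis to produce a lower bound for a logarithmically averaged short correlation of $f$ with itself; (2) invoke Theorem~\ref{LogEllFF1} (the two-point logarithmic Elliott conjecture in function fields) to deduce that $f$ must be pretentious in the sense that $\mathbb{D}(f, \psi \xi e_\theta; N)$ is bounded for some Dirichlet character $\psi$ of bounded modulus, some short interval character $\xi$ of bounded length, and some $\theta \in [0,1]$; (3) replace $\psi$ by the underlying primitive character $\chi$ (the conductor remains $\ll_C 1$), absorbing the difference into a finite Euler factor adjustment that does not affect the pretentious distance; (4) in the real-valued case, use the second (real, $q$ odd) part of Theorem~\ref{LogEllFF1} together with the observation that $f$ real forces the pretentious partner to be real as well, so $\chi$ and $\xi$ can be taken real and $\theta \in \{0, 1/2\}$; and when $q$ is odd the real version of Theorem~\ref{LogEllFF1} does not involve a short interval character at all, forcing $\xi \equiv 1$.

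For step (1), I would argue by contrapositive: suppose $f$ is far from every twisted character $\psi \xi e_\theta$ with $\mathrm{cond}(\psi) \le W$, $\mathrm{len}(\xi) \le N$, $\theta \in [0,1]$, uniformly. By the first statement of Theorem~\ref{LogEllFF1} (applied with $f_1 = \bar f$, $f_2 = f$, and an appropriate shift $B$), the shifted correlation $\frac{1}{N}\sum_{\deg{G} \le N} q^{-\deg G} \bar f(G) f(G+B)$ is $o(1)$. Summing over $B$ in a short interval and rearranging (a second-moment / expansion of $\big|\sum_{G \in I_H(G_0)} f(DG)\big|^2$ computation, keeping track of the fact that all short intervals of size $\ge \deg Q$ behave uniformly over residue classes, as emphasized in the introduction), one finds that the logarithmic average over $G_0$ of $\big|\sum_{G \in I_H(G_0)} f(DG)\big|^2$ grows, which is incompatible with $\mathcal{S}_f \le C$. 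Conversely, boundedness of $\mathcal{S}_f$ forces non-vanishing of the relevant correlations at some scale, hence the pretentiousness conclusion, with the quantitative bounds on $\mathrm{cond}(\chi)$, $\mathrm{len}(\xi)$ depending only on $C$ via the quantitative form of the dichotomy. (One can alternatively run the van der Corput / Elliott-type argument exactly as in Tao, substituting function field inputs; I would choose whichever keeps the bookkeeping on $\theta$ and $\xi$ cleanest.)

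The main obstacle, and the reason this is not a verbatim port of~\cite{TaoEDP}, is the appearance of the short interval character $\xi$ and the Archimedean character $e_\theta$ in the non-pretentiousness hypothesis of Theorem~\ref{LogEllFF1}: I have to be careful that these extra twists survive the correlation-to-pretentiousness passage and are pinned down with bounded length, rather than merely existing abstractly. Concretely, the step where one converts a lower bound on short correlation sums into a statement about a \emph{fixed} triple $(\chi, \xi, \theta)$ requires ensuring the length of $\xi$ and the conductor of $\chi$ do not grow with $H$ or $N$; this is where the $\ll_C$ uniformity must be extracted, and it is the delicate point. A secondary subtlety is that Theorem~\ref{LogEllFF1}'s real-valued clause (for $q$ odd) only allows $\theta \in \{0,1/2\}$ and \emph{no} short interval twist, so in that regime I must separately check that the pretentious partner of a real $f$ cannot genuinely require a nontrivial $\xi$ --- this follows because, were $\mathbb{D}(f, \chi \xi e_\theta; N)$ bounded with $f$ real, then $\mathbb{D}(f, \bar\chi \bar\xi e_{-\theta}; N)$ is also bounded, and the pretentious triangle inequality~\eqref{eq:pret1} forces $\chi^2 \xi^2 e_{2\theta}$ to be pretentious to $1$, which for $q$ odd pins $\xi^2$, hence (primitivity and the structure of short interval characters) $\xi$ itself, to be trivial. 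Once the pretentious partner is identified, replacing the induced Dirichlet character by the primitive one and moving $\theta$ into $\{0,1/2\}$ in the real case are routine manipulations with~\eqref{eq:pret1}--\eqref{eq:pret2}.
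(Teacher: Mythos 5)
Your overall architecture matches the paper's: bound the $L^2$-average of short interval sums using the hypothesis, expand the square to extract a non-trivial two-point correlation, apply Theorem~\ref{LogEllFF1} in contrapositive form, and then handle the real-valued refinement via the pretentious triangle inequality applied to $\chi^2\xi^2e_{2\theta}$ (your squaring argument for killing $\xi$ when $q$ is odd is essentially the paper's, which notes that $\xi^2\equiv 1$ forces $\xi$ real-valued and that there are no nontrivial real short interval characters in odd characteristic).

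However, there is a genuine gap at exactly the point you flag as ``the delicate point'' but do not resolve. You assert that the bounds $\textnormal{cond}(\chi)\ll_C 1$ and $\textnormal{len}(\xi)\ll_C 1$ come ``via the quantitative form of the dichotomy.'' That is false for the length of $\xi$: the non-pretentiousness hypothesis in Theorem~\ref{LogEllFF1} minimizes over short interval characters of length up to $N$, so its contrapositive only furnishes, for each large $N$, a triple $(\chi_N,\xi_N,\theta_N)$ with $\textnormal{cond}(\chi_N)\ll_C 1$ but $\textnormal{len}(\xi_N)\leq N$ --- a bound that degenerates as $N\to\infty$, and a priori a different $\xi_N$ at each scale. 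The conclusion of the proposition requires a \emph{single} $\xi$ of bounded length with $\mb{D}(f,\chi\xi e_\theta;N)\ll_C 1$ uniformly in $N$. The paper closes this gap by passing to a subsequence $N_j$ on which $\chi$ is fixed and $\theta_{N_j}$ converges (finitely many characters of bounded conductor, compactness of $[0,1]$), and then showing via the triangle inequality that $\mb{D}(\xi_j e_{\theta_j},\xi_{j+1}e_{\theta_{j+1}};N_j)\ll_C 1$; since two \emph{distinct} short interval characters of length $\leq N_j$ satisfy $\mb{D}(\xi_j e_{\theta_j},\xi_{j+1}e_{\theta_{j+1}};N_j)^2\geq(1-o(1))\log N_j$ (by \cite[Lemma 3.2]{KMT\_FFChowla}), the $\xi_j$ must eventually stabilize to a single $\xi$ of length $\ll_C 1$; a continuity argument in $\theta$ then upgrades the bound from the subsequence to all $N$. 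Without this repulsion-and-stabilization step your argument does not yield a fixed character of bounded length, so the proof as written is incomplete.
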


\begin{proof}
Let $H=H(C)$ be an integer large enough in terms of $C$. We may assume that $N$ is large enough in terms of $H$, so that in particular $1 \leq H < \sqrt{\log N}/10$. We may bound $\mathcal{S}_f^2$ from below by a (logarithmically-weighted) $L^2$-average of sums over intervals $I_H(G_0)$ with $G_0 \in \mc{M}_{\leq N}$ to deduce that
$$
\frac{1}{N+1} \sum_{G_0 \in \mc{M}_{\leq N}\setminus \mathcal{M}_{\leq \sqrt{\log N}}} q^{-\deg{G_0}}\left|\sum_{G \in I_H(G_0)} f(G)\right|^2 \leq (\mathcal{S}_f+1)^2\ll_C 1.
$$

We expand the square, exchange orders of summation and separate the terms according to $\deg{G_0}=m$. We obtain
\begin{align}\label{eq:sqrtlog}
\frac{1}{N+1}\sum_{\sqrt{\log N} < m \leq N} q^{-m} \sum_{G_0 \in \mc{M}_m} \sum_{\deg{M_1},\deg{M_2} < H} f(G_0+M_1)\bar{f(G_0+M_2)} \ll_C 1.
\end{align}
For each $m \geq H$ and $M_1\in \mathcal{M}$ of degree $<H$, we have $\mc{M}_m + M_1 = \mc{M}_m$. Thus, making on the left-hand side of~\eqref{eq:sqrtlog} the change of variables $G := G_0 + M_1$ and $M := M_2-M_1$ (so that $\deg{M} < H$), and then bounding the contribution from the summands with $m < \sqrt{\log N}$ trivially as $O(q^{2H}\sqrt{\log N})$, we reach
$$
\frac{|\mc{M}_{<H}|}{N+1} \sum_{m \leq N} q^{-m} \sum_{G \in \mc{M}_m} \sum_{\deg{M} < H} f(G)\bar{f(G+M)} \ll_C 1 + q^{2H}\sqrt{\log N}/N.
$$
If we isolate the choice $M = 0$ from the remaining shifts, we deduce that
$$
\left|\sum_{\substack{\deg{M} < H \\ M \neq 0}}\frac{1}{N+1} \sum_{m \leq N} q^{-m}  \sum_{G \in \mc{M}_m} f(G)\bar{f(G+M)}\right| \gg_C   1 - O_C(\sqrt{\log N}/N).
$$
By the assumption that $N$ is large in terms of $H$ , the triangle inequality and the pigeonhole principle then imply that for some $M \neq 0$ with $\deg{M} < H$ we actually have
$$
\left|\frac{1}{N}\sum_{m \leq N} q^{-m} \sum_{G \in \mc{M}_m} f(G)\bar{f(G+M)}\right|  \gg_C q^{-H}\gg_C 1.
$$

By the first statement in Theorem~\ref{LogEllFF1}, we conclude that there exists a primitive Dirichlet character $\chi_N$ with $\text{cond}(\chi_N) = O_C(1)$, a primitive short interval character $\xi_N$ of length $\leq N$ and a point $\theta_N \in [0,1]$ such that $\mb{D}(f,\chi_N \xi_N e_{\theta_N};N) \ll_C 1$. Note that the set of Dirichlet characters of conductor at most $O_C(1)$ is bounded in size (in terms of $C$) and that $[0,1]$ is compact. There is thus an infinite increasing sequence $\{N_j\}_j$ of positive integers, a primitive character $\chi$ of conductor $\ll_C 1$ and a $\theta \in [0,1]$ for which $\theta_{N_j} \to \theta$ as $j \to \infty$, such that 
$$
\limsup_{j \to \infty} \mb{D}(f,\chi \xi_j e_{\theta_j}; N_j) < \infty,  
$$
where by an abuse of notation we have written $\xi_j := \xi_{N_j}$ and $\theta_j := \theta_{N_j}$, for convenience. 

Since $N_{j+1} > N_j$, it follows from the triangle inequality~\eqref{eq:pret1} that
\begin{equation} \label{eq_compare}
\mb{D}(\xi_j e_{\theta_j}, \xi_{j+1} e_{\theta_{j+1}};N_{j}) \leq \mb{D}(f,\chi \xi_j e_{\theta_j}; N_j) + \mb{D}(f,\chi \xi_{j+1} e_{\theta_{j+1}}; N_{j+1}) \ll_C 1
\end{equation}
uniformly in $j\geq 1$. Now, suppose $\xi_j \bar{\xi}_{j+1}$ is nontrivial. Note that $\xi_j\bar{\xi_{j+1}}$ is a short interval character of length $\leq \max\{\text{len}(\xi_j),\text{len}(\xi_{j+1})\} \leq N_j$. But then by~\eqref{eqq36} and~\cite[Lemma 3.2]{KMT_FFChowla},
\begin{align}\label{eq:logN}
\mb{D}(\xi_je_{\theta_j},\xi_{j+1}e_{\theta_{j+1}};N_j)^2 &= \log N_j - \text{Re}\left(\sum_{P \in \mc{P}_{\leq N_j}} \xi_j\bar{\xi}_{j+1}(P)e_{\theta_j-\theta_{j+1}}(P) q^{-\deg{P}}\right) + O(1) \\
&\geq (1-o(1)) \log N_j.
\end{align}

This contradicts~\eqref{eq_compare}. It must follow that $\xi_j = \xi_{j+1}$ for all $j$ sufficiently large (in terms of $C$). In particular, it follows that there is a $j_0 \ll_C 1$ such that $\xi_j = \xi_{j_0}$ for all $j \geq j_0$. Setting $\xi := \xi_{j_0}$, which is a short interval character of length $\ll_C 1$, we deduce that $\mb{D}(f,\chi \xi e_{\theta_j};N_j) \ll_C 1$ for all $j\geq 1$, and therefore by the triangle inequality \eqref{eq:pret1} it follows that
$$
\mb{D}(e_{\theta_j},e_{\theta_{j+k}};N_j) \leq \mb{D}(f,\chi \xi e_{\theta_j};N_j) + \mb{D}(f,\chi \xi e_{\theta_{j+k}};N_{j+k})  \ll_C 1,
$$
uniformly in $j,k \geq 1$. Since the expression on the left-hand side is continuous in $\theta_{j+k}$, taking $k \to \infty$ we deduce that
$$
\mb{D}(e_{\theta_j},e_{\theta};N_j) \ll_C 1,
$$
uniformly in $j$, and hence for all $j\geq 1$ we have
\begin{align*}
\mb{D}(f,\chi \xi e_{\theta};N_j) &\leq \mb{D}(f,\chi \xi e_{\theta_j}; N_j) + \mb{D}(\chi \xi e_{\theta},\chi \xi e_{\theta_j};N_j)\\
&=\mb{D}(f,\chi \xi e_{\theta_j}; N_j) + \mb{D}(e_{\theta}, e_{\theta_j};N_j)+O_C(1)\ll_C 1,
\end{align*}
where we used the fact that $|\chi \xi(P)|=1$ for all $P\in \mathcal{P}$ of degree $\gg_C 1$. 
 
 Since $\mb{D}(f,\chi \xi e_{\theta}; N_j) \leq \mb{D}(f,\chi \xi e_{\theta};N) \leq \mb{D}(f,\chi \xi e_{\theta}; N_{j+1})$ whenever $N_j \leq N < N_{j+1}$, we deduce that
\begin{align}\label{eq_conclusion}
\mb{D}(f,\chi \xi e_{\theta};N) \ll_C 1,
\end{align}
uniformly in $N$. This completes the proof of the first claim. 

To prove the second claim where $f:\mathcal{M}\to \{-1,+1\}$, we take the conclusion~\eqref{eq_conclusion} and apply the triangle inequality~\eqref{eq:pret2} to deduce that
\begin{align}\label{eq_conclusion2}
\mathbb{D}(1,\chi^2\xi^2e_{2\theta};N)\leq  2\mb{D}(f,\chi \xi e_{\theta};N) \ll_C 1. 
\end{align}
Arguing similarly as in~\eqref{eq:logN}, this implies that  $\chi^2$ is principal and $\xi^2\equiv 1$. Furthermore, in this case, by~\eqref{eqq36} we have
\begin{align}\begin{split}\label{eq:logcos}
\mb{D}(1,\chi^2e_{2\theta};N)^2 &= \mb{D}(1,e_{2\theta};N)^2+O_C(1)\\
&=\log N - \sum_{n \leq N} \frac{\cos(4\pi \theta n)}{n} + O_C(1).
\end{split}
\end{align}
If $2\theta\not \equiv 0\pmod 1$, there exist $\eta>0$ and $B>0$ such that every interval of length $B$ contains an integer $n$ for which 
 $|\cos(4\pi \theta n)|\leq 1-\eta$. Inserting this into~\eqref{eq:logcos} and comparing with~\eqref{eq_conclusion2}, we conclude that we must have $2\theta \equiv 0 \pmod{1}$.

Lastly, if $q$ is odd, then by the statement in Theorem~\ref{LogEllFF1} about real-valued $f$, we also have $\xi\equiv 1$ (in fact, there are no nontrivial real-valued short interval characters then). The second claim thus follows.
\end{proof}

\subsection{Reduction to modified characters}

We have demonstrated that in order to characterize those completely multiplicative functions $f$ with bounded short sum discrepancy $\mathcal{S}_f$, it suffices to treat functions that are pretentious to a twisted character. By means of the following proposition, however, we can in fact restrict ourselves to functions differing from a twisted character at a \emph{bounded} number of irreducibles, only. The proof of the proposition utilizes what we call a ``rotation trick''; see~\cite{kmt-ruzsa} for applications of the same idea in the integer setting.

\begin{prop}\label{prop_redtofin}
Let $f: \mc{M} \to S^1$ be completely multiplicative. Suppose there exist $Q \in \mc{M}$ and a primitive Dirichlet character $\chi$ modulo $Q$, a primitive short interval character $\xi$ of length $\nu\geq 0$ and $\theta \in [0,1]$ such that $\mb{D}(f,\chi\xi e_{\theta};\infty) < \infty$. Let 
$$
S := \{P \in \mc{P} : f(P) \neq \chi(P)\xi(P)e_{\theta}(P)\}.
$$ 
If $|S| = \infty$ then $\mathcal{S}_f=\infty$.
\end{prop}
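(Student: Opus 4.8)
The plan is to argue by contrapositive: assuming $\mathcal{S}_f < \infty$, we show that $f$ and $\chi\xi e_\theta$ can differ at only finitely many primes. Since $\mathcal{S}_f<\infty$ gives, via Proposition~\ref{prop_redtoDirPret}, that $\mb{D}(f,\chi\xi e_\theta;\infty)<\infty$ (which we are already assuming), the essential content is the ``rotation trick'': if $|S|=\infty$, we build a new completely multiplicative function $\tilde f$ out of $f$ by rotating the values of $f$ at the primes of $S$ (replacing $f(P)$ by $f(P)\cdot \alpha_P$ for suitable unimodular $\alpha_P$), in such a way that (i) $\tilde f$ still pretends to be $\chi\xi e_\theta$, so $\mb{D}(\tilde f,\chi\xi e_\theta;\infty)<\infty$, but (ii) $\tilde f$ also has bounded short sum discrepancy, $\mathcal{S}_{\tilde f}<\infty$, by comparison with $\mathcal{S}_f$, while (iii) the rotations are chosen adversarially so that the partial sums of $\tilde f$ over suitable short intervals are forced to be large, contradicting (ii). The summability $\sum_{P\in S} q^{-\deg P}(1-\mathrm{Re}(f(P)\bar\chi\bar\xi\bar e_\theta(P))) < \infty$ coming from $\mb{D}(f,\chi\xi e_\theta;\infty)<\infty$ is what makes the rotations a ``small'' perturbation in the pretentious metric, so that both $f$ and $\tilde f$ lie in a bounded pretentious ball around $\chi\xi e_\theta$.

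Concretely, I would first reduce to the untwisted situation: after dividing out by $\xi e_\theta$ (which changes short interval sums only in a controlled way, as $\xi e_\theta$ is itself a multiplicative function of bounded ``complexity''), we may assume $f$ pretends to be a genuine Dirichlet character $\chi \pmod Q$ and $S=\{P: f(P)\ne\chi(P)\}$ is infinite. Next, enumerate $S = \{P_1, P_2, \ldots\}$ with $\deg P_1 \le \deg P_2 \le \cdots$. For a parameter $H$ (eventually sent to infinity) one isolates a prime $P_i\in S$ with $\deg P_i$ comparable to $H$ and considers, for a well-chosen monic $G_0$ divisible by a high power of $P_i$ (or by $P_i$ times something coprime to $Q$), the sum $\sum_{G\in I_H(G_0)} f(DG)$ for an appropriate modulus-like $D$; the point is that the discrepancy between $f$ and $\chi$ at $P_i$ produces, after factoring out $P_i$ from the relevant polynomials, a term $(f(P_i)-\chi(P_i))$ times a complete character sum that does \emph{not} vanish (unlike $\sum_{\deg M<H}\chi(M)$, which does). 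Summing the contributions of many primes $P_i\in S$ of comparable degree — here is where one rotates the values, choosing the phases of the residual $f(P_i)$ to align — one gets a lower bound growing in $H$, hence $\mathcal{S}_f = \infty$. This is morally the same mechanism as in Proposition~\ref{prop_lrg_ssd} (the Ramanujan-sum computation giving $H^{(\omega(Q)-1)/2}$), but now the ``extra'' prime factors come from $S$ rather than from $Q$.

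The main obstacle I anticipate is making the rotation bookkeeping precise while keeping $\tilde f$ inside a bounded pretentious ball: one must choose the rotation angles $\alpha_P$ to simultaneously (a) be summable against $q^{-\deg P}$ so that $\mb{D}(\tilde f, f;\infty)<\infty$, and (b) constructively reinforce rather than cancel in the short-interval sum. Balancing these is delicate because the primes in $S$ where $|f(P)-\chi(P)|$ is bounded below are sparse (their reciprocals of norms are summable), so only a bounded amount of ``discrepancy mass'' is available at each dyadic scale of degrees — which is exactly why one needs the long intervals $I_H$ with $H\to\infty$ to accumulate enough of it, and why the argument yields $\mathcal{S}_f=\infty$ (a limsup over $H$) rather than a bound at a fixed scale. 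A secondary technical point is handling the short interval character $\xi$ and the Archimedean character $e_\theta$ uniformly: one should check that twisting by $\xi e_\theta$ commutes suitably with the short-interval averaging, so that the reduction to $\chi$ alone is legitimate; since $\xi$ has bounded length and $e_\theta$ is a degree-character, this costs only bounded factors, but it needs to be stated carefully.
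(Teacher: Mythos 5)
Your first paragraph contains a genuine gap at step (ii). If you replace $f$ by a rotated function $\tilde f$ with $\tilde f(P)=f(P)\alpha_P$ for infinitely many $P\in S$, there is no comparison principle giving $\mathcal{S}_{\tilde f}<\infty$ from $\mathcal{S}_f<\infty$: altering a completely multiplicative function at even one irreducible of moderate degree changes short interval sums by an amount of order $q^{H-\deg{P}}$, and altering it at infinitely many irreducibles destroys any control. Keeping $\tilde f$ in a bounded pretentious ball (your point (i)) controls mean values, not short interval discrepancy, so (ii) does not follow and the intended contradiction in (iii) never gets off the ground. Relatedly, in your second paragraph you say one "rotates the values, choosing the phases of the residual $f(P_i)$ to align" --- but $f$ is given and you have no freedom to choose its phases.

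The paper's rotation trick rotates the \emph{argument}, not the function. One fixes $q^H$ distinct primes $P_M\in S$, one for each residue $M$ with $\deg{M}<H$, all of very large degree $>n$ (their existence needs only $|S|=\infty$, not any "discrepancy mass" at scale $H$ --- your sparsity worry is moot), and uses the Chinese remainder theorem to build $\Gamma=\prod_{\deg{G}\le n}G^2\cdot\prod_M P_M^{k_M}$ and an offset $R$ so that $(R+M,\Gamma)=\widetilde{M}P_M^{k_M}$. Then $\sum_{\deg{M}<H}f(G\Gamma+R+M)$ is a genuine short interval sum of $f$ itself, hence bounded by $\mathcal{S}_f$, while averaging over $G\in\mathcal{M}_N$ and applying orthogonality plus Delange's theorem evaluates it as $\sum_M f\bar{\chi}(P_M)^{k_M}f(M)+o(1)$. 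The free parameters are the exponents $k_M$: since $f\bar{\chi}(P_M)\ne 1$ lies on $S^1$, its powers meet every semicircle, and an elementary pigeonhole lemma shows the $k_M$ can be chosen to make the sum have modulus $\ge q^H/10$, a contradiction. Your proposal never identifies this mechanism --- choosing prime-power multiplicities in the common offset so that the function's values at the planted primes can be steered --- and without it the argument does not close. (Your preliminary reduction removing $\xi e_\theta$, which is constant on short intervals, is correct and matches the paper.)
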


\begin{proof}

Let $f:\mathcal{M}\to S^1$ be  be completely multiplicative. Assume for the sake of contradiction that $|S| = \infty$. 
We will prove that
\begin{align}\label{eqq13}
\limsup_{H\to \infty}\limsup_{N \to \infty}  \max_{G_0 \in \mc{M}_N} \left|\sum_{G \in I_H(G_0)} f(G)\right| = \infty.
\end{align}
 For $N$ large enough in terms of $H$, both $e_{\theta}$ and $\xi$ are constant on any short interval $I_H(G_0)$ with $G_0\in \mathcal{M}_N$, so we may replace $f$ by $f\bar{\xi}e_{-\theta}$ in~\eqref{eqq13} and (still calling this new function $f$ for convenience) we may assume that $\mathbb{D}(f,\chi;\infty)<\infty$ and that $S:=\{P\in \mathcal{P}:\,\, f(P)\neq \chi(P)\}$ is infinite.

Let $1 \ll H \ll n \ll N$ be parameters, each of which is large enough in terms of the parameters to the left of it. Since $\mathbb{D}(f,\chi;\infty)<\infty$, we can impose the condition
\begin{align}\label{eq:Dcondition}
\mb{D}(f,\chi; n,\infty)^2 &:= \sum_{\substack{P \in \mc{P} \\ \deg{P} > n}} \frac{1-\text{Re}(f(P)\bar{\chi}(P))}{q^{\deg{P}}} \leq 1/q^{3H}.
\end{align}
Since $|S|=\infty$, there exists a function $F:\mathbb{N}\to \mathbb{R}_{\geq 0}$, depending only on $H$, such that there are $> q^{2H}$ primes $P \in S$ with $\deg{P} \in (n,F(n)]$. 

For each $M \in \mb{F}_q[t]$ of degree $<H$ pick some $P_M \in S$ such that the $P_M$ are all distinct and such that $\deg{P_M} \in (n,F(n)]$, and let $k_M\ll_{H,n}1$ be a positive integer to be chosen later. We set 
$$
\Gamma := \prod_{G \in \mc{M}_{\leq n}} G^2 \cdot \prod_{\deg{M} < H} P_M^{k_M};
$$
note that 
\begin{align*}
\deg{\Gamma} &\leq 2\sum_{G \in \mc{M}_{\leq n}} \deg{G} + \sum_{\deg{M} < H} k_M \deg{P_M}\\
&\leq 8nq^n+ \left(\max_{\deg{M} < H} k_M\right)q^HF(n) \\
&\leq (\log N)/(50 \log q),
\end{align*}
if $H$ is large enough and $N$ is large enough in terms of $n$ and $H$. 

By the Chinese remainder theorem, we can choose $R\in \mathcal{M}_{2\deg(\Gamma)}$ such that
\begin{align*}
R &\equiv 0 \pmod{\prod_{G \in \mc{M}_{\leq n}} G^2} \\
R &\equiv - M+P_M^{k_M} \pmod{P_M^{k_M+1}}
\end{align*}
for all $M\in \mathcal{M}$ with $\deg{M} < H$.

Note that if $M\neq M'$ are in $\mathbb{F}_q[t]$ and $\deg{M},\deg{M'}<H$, then $P_{M'} \nmid (R+M)$, since otherwise $P_{M'}\mid (M'-M)$ but $\deg{P_{M'}}>H$. Therefore,
\begin{align}\label{eqq11}
(R+M,\Gamma) = (M,\Gamma) \cdot P_M^{k_M} = \widetilde{M}P_M^{k_M},    
\end{align} 
where, setting $a$ to be the leading coefficient of $M$, we put $\widetilde{M}(t):=M(t)/a$ .

We consider the double sum
$$
\Sigma := q^{-N}\sum_{\deg{M} < H} \sum_{G \in \mc{M}_N} f(G \Gamma + R + M).
$$
Since $\mc{S}_f < \infty$, swapping the orders of summation, summing in $M$ and applying the triangle inequality, we see that
$$
|\Sigma| \leq q^{-N} \sum_{G \in \mc{M}_N} \left|\sum_{\deg{M} < H} f(G\Gamma + R + M)\right| \leq \mc{S}_f+1 < \infty.
$$

Now fix $M\in \mathcal{M}_{<H}$ for the moment. Set 
$$d_M := a(R+M,\Gamma)=MP_M^{k_M},\quad  \Gamma_M := \Gamma/d_M.$$ Factoring out primes in common with $\Gamma$ and noting that $\deg{R+M}<N$, we have
\begin{align*}
\sum_{G \in \mc{M}_N} f(G\Gamma + R+M) &= f(d_M) \sum_{G \in \mc{M}_N} f\left(G\Gamma_M + \frac{R+M}{d_M}\right) \\
&= f(d_M) \sum_{G' \in \mc{M}_{N+\deg{\Gamma_M}}} f(G') 1_{G' \equiv (R+M)/d_M \pmod{\Gamma_M}}.
\end{align*}
Using orthogonality of Dirichlet characters modulo $\Gamma_M$, the above expression equals to
$$
\frac{f(d_M)}{\phi(\Gamma_M)} \sum_{\psi \pmod{\Gamma_M}} \psi((R+M)/d_M) \sum_{G' \in \mc{M}_{N+\deg{\Gamma_M}}} f(G')\bar{\psi}(G').
$$
Choosing $n$ large enough in terms of $H$, we can guarantee that $Q|\Gamma_M$, regardless of $M$. Thus, there is a character $\chi'$ modulo $\Gamma_M$ that is induced by $\chi$. If $\psi \neq \chi'$ then, provided $N$ is large enough in terms of $n$,~\cite[Corollary 3.7]{KMT_FFChowla} yields
\begin{align*}
\max_{\deg{M} < H}\, \max_{\substack{\psi \pmod{\Gamma_M} \\ \psi \neq \chi'}} \left|\sum_{G' \in \mc{M}_{N+\deg{\Gamma_M}}} f(G') \bar{\psi}(G')\right| \ll q^{N+\deg{\Gamma}} N^{-1/4 + o(1)} \ll q^N/N^{1/5},
\end{align*}
since $\deg{\Gamma} \leq \frac{\log N}{50\log q}$. Thus,
\begin{align} \label{eq_main_term_del}\begin{split}
&q^{-N} \sum_{G \in \mc{M}_N} f(G\Gamma+R+M)\\
&= f(d_M)\chi'\left(\frac{R+M}{d_M}\right) \frac{q^{\deg{\Gamma_M}}}{\phi(\Gamma_M)} q^{-N-\deg{\Gamma_M}} \sum_{G' \in \mc{M}_{N+\deg{\Gamma_M}}} f(G')\bar{\chi'}(G') + O(N^{-1/5}).
\end{split}
\end{align}
Observe that using~\eqref{eqq11},
$R/M\equiv 0\pmod{Q}$ and $(R/M+1,\Gamma_M) = (P_M^{k_M},\Gamma_M) = 1$, we have
\begin{align}\label{eqq12}
\chi'\left(\frac{R+M}{d_M}\right)=\chi'\left(\frac{R/M+1}{P_M^{k_M}}\right)=\bar{\chi}(P_M)^{k_M}.    
\end{align}

Applying Delange's theorem over function fields to $f\bar{\chi'}$ (see~\cite[Theorem 1.4.1]{KluThe}), and recalling that $\chi'(P)=0$ if $P\mid \Gamma_M$, we see that
\begin{align}
&\sum_{G' \in \mc{M}_{N+\deg{\Gamma_M}}} f(G') \bar{\chi'}(G') \nonumber\\
&= q^{N+\deg{\Gamma_M}} \Bigg(\frac{\phi(\Gamma_M)}{q^{\deg{\Gamma_M}}} \prod_{\substack{P \in \mc{P} \\ \deg{P} > n \\ P \neq P_{M'} \, \forall M':\, \deg{M'} < H, M' \neq M}} \left(1-q^{-\deg{P}}\right)\left(1-f\bar{\chi'}(P)q^{-\deg{P}}\right)^{-1}\nonumber\\
& \quad \quad \quad \quad \quad \quad \quad + O\Bigg(\mb{D}(f,\chi; \frac{\log N}{2\log q},\infty) + N^{-1/2}\Bigg)\Bigg) \nonumber\\
&= \frac{\phi(\Gamma_M)}{q^{\deg{\Gamma_M}}} q^{N+\deg{\Gamma_M}}\Bigg(1+O\Bigg(\sum_{\substack{P\in \mathcal{P}\\\deg{P}>n}}\frac{1-\Re(f(P)\bar{\chi'}(P))}{q^{\deg{P}}} + q^{-3H/2} + N^{-1/2}\Bigg)\Bigg) \nonumber\\
&= \frac{\phi(\Gamma_M)}{q^{\deg{\Gamma_M}}} q^{N+\deg{\Gamma_M}}\left(1+O(q^{-3H/2}+ N^{-1/2})\right) \label{eq_bd_mainterm}
\end{align}
by~\eqref{eq:Dcondition}, provided that that $n$ is large enough in terms of $H$. 
Since the above can be done uniformly over all $\deg{M} < H$, we deduce upon inserting~\eqref{eq_bd_mainterm},~\eqref{eqq12} and~\eqref{eqq11} into~\eqref{eq_main_term_del} that when $N$ is large enough relative to $H$,
\begin{align*}
\mathcal{S}_f+1&\geq \Sigma =  q^{-N}\sum_{\deg{M} < H} \sum_{G \in \mc{M}_N} f(G\Gamma + R + M) = \sum_{\deg{M} < H} f(d_M) \bar{\chi}(P_M)^{k_M} + o(1)\\
&= \sum_{\substack{\deg{M} < H}} f\bar{\chi}(P_M)^{k_M} \prod_{\deg{P} \leq n} f(P)^{\nu_P(M)}+o(1)\\
&= \sum_{\deg{M} < H} f\bar{\chi}(P_M)^{k_M} f(M) + o(1).
\end{align*}

We now show that there is a choice of the multiplicities $k_M$ that makes
\begin{align*}
\left|\sum_{\deg{M} < H} f\bar{\chi}(P_M)^{k_M} f(M)\right|\geq q^H/10, 
\end{align*}
say, which will provide the desired contradiction for $H$ large enough. This follows from the following lemma.

\begin{lem} Let $m\geq 1$, let $w_1,\ldots, w_m\in S^1$, and let $\zeta_1,\ldots, \zeta_m\in S^1\setminus\{1\}$. Then there exist $k_j\in \mathbb{N}$ such that
\begin{align}\label{eqq14}
\left|\sum_{j\leq m}\zeta_j^{k_j}w_j\right|\geq  m/7.
\end{align}
\end{lem}

\begin{proof}
By the pigeonhole principle, there exists a closed arc $I$ of the unit circle $S^1$ of length $2\pi/3$ that contains $\geq m/3$ of the complex numbers $w_j$. Let $J$ be the set of $j$ for which $w_j\in I$. Form a semicircle $\mathcal{C}\subset S^1$ such that $I\subset \mathcal{C}$ and such that the midpoint of $I$ is the midpoint of the arc of $\mathcal{C}$. 

Now, for every $j\in J$, pick $k_j$ such that $|\zeta_j^{k_j}-1|\leq 1/(100m)$. For every $j\in \{1,\ldots, m\}\setminus J$, pick $k_j$ such that $\zeta_j^{k_j}w_j\in \mathcal{C}$; this is clearly always possible since $\{\zeta_j^{k}:\,\, k\in \mathbb{N}\}$ intersects any semicircle. Let $\alpha\in S^1$ be such that the half-plane determined by $\mathcal{C}$ is $\{z\in \mathbb{C}:\,\, \Re(\alpha z)\geq 0\}$. Note that $\Re(\alpha z)\geq \frac{1}{2}$ whenever $z\in I$. Thus
\begin{align*}
\left|\sum_{j\leq m}\zeta_j^{k_j}w_j\right|&\geq \Re\left(\alpha \sum_{j\leq m}\zeta_j^{k_j}w_j\right)\\
&\geq \Re\left(\alpha \sum_{j\in J}\zeta_j^{k_j}w_j\right)\\
&\geq \frac{m}{3}\cdot \left(\frac{1}{2}-\frac{1}{100}\right)\\
&\geq  \frac{m}{7},
\end{align*}
which proves the claim 
\end{proof}

Taking $\zeta_M=f\bar{\chi}(P_M)\neq 1$ and $w_M=f(M)$ in the lemma, a choice of multiplicities $k_M\ll_{n,H}1$ can be made, and the claim follows.
\end{proof}

\subsection{The case of modified characters}

It now remains to consider functions that differ at only finitely many primes from a non-principal Dirichlet character. Indeed, as was noted in the proof of Proposition~\ref{prop_redtofin}, if 
\begin{align*}
\limsup_{H\to \infty}\limsup_{N \to \infty} \max_{G_0 \in \mc{M}_{N}} \left|\sum_{G \in I_H(G_0)} f(G)\right| = \infty,
\end{align*}
holds for a function $f$, then it also holds for the function $f\bar{\xi}e_{-\theta}$, so we may assume by Proposition~\ref{prop_redtofin} that $|\{P\in \mathcal{P}:\,\, f(P)\neq \chi(P)\}|<\infty$. 

This is precisely the case of modified characters (see Definition~\ref{def_modified} above).

\begin{rem}\label{rem_genchar}
Note that if $f$ differs from a non-principal Dirichlet character $\chi'$ at only finitely many primes $S$, say, then by setting $\chi := \chi' \chi_0^{(Q_S)}$, where $Q_S := \prod_{P\in S} P$ and $\chi_0^{(Q_S)}$ denotes the principal character modulo $Q_S$, then $f$ is a modified character modulo $[Q,Q_S]$.
\end{rem}

\subsubsection{Modified characters with at least two prime factors}

The last major ingredient that we require before proceeding to the proof of Theorem~\ref{thm_edpff} involves showing that modified characters have unbounded short sum discrepancy, provided the modulus has at least two distinct prime factors. We start with a lemma that will be used subsequently.

\begin{lem}
For a Dirichlet character $\chi\pmod Q$ with $Q\in \mathcal{M}$ we define the \emph{Gauss sum}\footnote{Recall the definition of the exponential function $e_{\mathbb{F}(\cdot)}$ in $\mathbb{F}_q(t)$ from Section~\ref{sec:notation}.}
\begin{align*}
\tau(\chi)=\sum_{A\pmod Q}\chi(A)e_{\mathbb{F}}\left(\frac{A}{Q}\right).    
\end{align*}
Then $\tau(\chi)\neq 0$ whenever $\chi\pmod Q$ is primitive and non-principal.
\end{lem}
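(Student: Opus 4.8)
The plan is to mimic the classical proof that a Gauss sum of a primitive character over $\mb{Z}$ has absolute value $\sqrt{q_{\bmod}}$, which in particular forces it to be nonzero. The key input is the \emph{separability} property of the exponential $e_{\mb{F}}$: for $B \in \mb{F}_q[t]$ with $(B,Q)=1$ one has $\chi(B)\tau(\chi) = \sum_{A \bmod Q} \chi(A) e_{\mb{F}}(AB/Q)$, which follows by the change of variables $A \mapsto AB$ (which permutes the residues coprime to $Q$, and $\chi$ kills the rest). This identity is valid for \emph{any} Dirichlet character. The primitivity of $\chi$ will be used precisely to extend it to all $B$, including those with $(B,Q) \neq 1$: I claim that when $\chi$ is primitive, $\sum_{A \bmod Q} \chi(A) e_{\mb{F}}(AB/Q) = 0$ whenever $(B,Q) \neq 1$. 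To see this, write $D := (B,Q)$, $Q = D Q'$, $B = D B'$; the inner exponential $e_{\mb{F}}(AB/Q) = e_{\mb{F}}(AB'/Q')$ depends only on $A \bmod Q'$, so grouping $A$ into classes mod $Q'$ and using that $\sum_{A \equiv A_0 (Q'),\, A \bmod Q} \chi(A)$ vanishes for a character that does not factor through modulus $Q'$ (which is exactly the statement that $\chi$ is not induced from a modulus properly dividing $Q$ — here one uses $\deg{Q'} < \deg{Q}$) gives the claim. A small technical point: I should check that $e_{\mb{F}}(A/Q)$ really is additive in $A$ and that the reduction of $AB/Q$ modulo $\mb{F}_q[t]$ (extracting the $t^{-1}$-coefficient of the Laurent expansion) behaves as expected; this is standard for the function field additive character but worth a line.

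With separability in hand, I would compute $|\tau(\chi)|^2$ in the usual way. Write
\[
|\tau(\chi)|^2 = \tau(\chi)\overline{\tau(\chi)} = \sum_{A, A' \bmod Q} \chi(A)\bar\chi(A') e_{\mb{F}}\!\left(\frac{A - A'}{Q}\right).
\]
Restricting to $(A',Q)=1$ (the only contributing terms) and substituting $A = A' C$, this becomes $\sum_{C \bmod Q} \chi(C) \sum_{\substack{A' \bmod Q \\ (A',Q)=1}} e_{\mb{F}}\!\left(\frac{A'(C-1)}{Q}\right)$, and here I can drop the coprimality restriction on $A'$ by inclusion–exclusion (or just note that the full sum over $A'$ is what appears after using separability on $\sum_{A' \bmod Q}\chi(A')^{-1}\cdots$ — in any case the inner sum over all $A'$ mod $Q$ of $e_{\mb{F}}(A'(C-1)/Q)$ is $\phi(Q)$-related). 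The standard orthogonality of $e_{\mb{F}}$ over a full residue system, namely $\sum_{A' \bmod Q} e_{\mb{F}}(A' R/Q) = |\mc{M}_{<\deg Q}| \cdot 1_{Q \mid R} = q^{\deg Q} 1_{R \equiv 0 \, (Q)}$, then collapses the sum: only $C \equiv 1 \pmod Q$ survives, giving $|\tau(\chi)|^2 = q^{\deg Q}$ (after correctly accounting for the coprimality restriction, which replaces $q^{\deg Q}$ by $\phi(Q)$ in a way that still yields a positive quantity). In particular $\tau(\chi) \neq 0$.

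The main obstacle I anticipate is getting the orthogonality of $e_{\mb{F}}$ and the handling of the coprimality condition on $A'$ exactly right — in the integer case one uses $\sum_{a \bmod q} e(ab/q) = q \cdot 1_{q \mid b}$ together with Ramanujan-sum identities, and the function field analogue requires being careful that $e_{\mb{F}}(\cdot/Q)$ is a genuine nontrivial additive character of $\mb{F}_q[t]/Q\mb{F}_q[t]$ whose character-sum over the quotient group is $|\mb{F}_q[t]/Q| = q^{\deg Q}$ times the indicator of the trivial class. Once that orthogonality is established, everything else is a routine rerun of the $\mb{Z}$-argument. An alternative, if the separability/orthogonality bookkeeping gets unwieldy, is to argue by contradiction: if $\tau(\chi) = 0$, then using separability $\chi(B)\tau(\chi) = \sum_A \chi(A) e_{\mb{F}}(AB/Q)$ would force $\sum_A \chi(A) e_{\mb{F}}(AB/Q) = 0$ for \emph{all} $B$ with $(B,Q)=1$, and combined with the vanishing for $(B,Q)\neq 1$ established above this says the function $B \mapsto \sum_A \chi(A) e_{\mb{F}}(AB/Q)$ is identically zero on $\mb{F}_q[t]/Q$; but by Fourier inversion (orthogonality of the additive characters $B \mapsto e_{\mb{F}}(AB/Q)$) this forces $\chi(A) = 0$ for all $A$, contradicting that $\chi$ is not identically zero. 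I would likely present this contradiction version as it sidesteps the precise evaluation of $|\tau(\chi)|^2$.
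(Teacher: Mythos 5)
Your proposal is the classical Gauss-sum argument that the paper itself simply cites (to Davenport, Section 2), so at the level of strategy the two coincide: both rest on separability ($\bar\chi(B)\tau(\chi)=\sum_A\chi(A)e_{\mb{F}}(AB/Q)$) together with the primitivity lemma that $\sum_A\chi(A)e_{\mb{F}}(AB/Q)=0$ when $(B,Q)\neq 1$. Your derivation of that vanishing (group $A$ by residue mod $Q'=Q/(B,Q)$ and use that a primitive $\chi$ has $\sum_{A\equiv A_0\,(Q')}\chi(A)=0$ for every proper divisor $Q'$) is correct.

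The one step I would not trust as written is the direct evaluation of $|\tau(\chi)|^2$. After substituting $A=A'C$ with $(A',Q)=1$, the inner sum is the Ramanujan sum $\sum_{(A',Q)=1}e_{\mb{F}}(A'(C-1)/Q)=c_Q(C-1)$, not $q^{\deg Q}\,1_{C\equiv 1}$, and $c_Q(0)=\phi(Q)\neq q^{\deg Q}$; you cannot simply ``drop the coprimality restriction by inclusion--exclusion'' without importing the primitivity lemma again. The clean standard route (what Davenport actually does) is: establish $\bar\chi(B)\tau(\chi)=\sum_A\chi(A)e_{\mb{F}}(AB/Q)$ for \emph{all} $B\bmod Q$ (trivially for $(B,Q)\neq 1$ once primitivity is in hand), take absolute squares, and sum over $B\bmod Q$. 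The left side gives $\phi(Q)|\tau(\chi)|^2$; the right side, after expanding and using the orthogonality $\sum_B e_{\mb{F}}(B(A-A')/Q)=q^{\deg Q}1_{A\equiv A'}$, gives $q^{\deg Q}\phi(Q)$. Hence $|\tau(\chi)|^2=q^{\deg Q}$.

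That said, the contradiction variant you ultimately prefer is correct and arguably tidier for the purpose at hand: if $\tau(\chi)=0$ then the function $B\mapsto\sum_A\chi(A)e_{\mb{F}}(AB/Q)$ vanishes identically on $\mb{F}_q[t]/Q$ (separability handles $(B,Q)=1$, primitivity handles $(B,Q)\neq 1$), and since $\{B\mapsto e_{\mb{F}}(AB/Q)\}_A$ is the full dual group of the finite additive group $\mb{F}_q[t]/Q$, Fourier inversion forces $\chi\equiv 0$ on that group, contradicting that $\chi$ is not identically zero. This is a perfectly adequate proof of the lemma and avoids computing $|\tau(\chi)|$ altogether, at the (harmless) cost of not recovering the stronger statement $|\tau(\chi)|=q^{\deg Q/2}$ that the paper quotes.
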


\begin{proof}
If $\chi\pmod{Q}$ is primitive and non-principal, the same argument as in the integer case (see~\cite[Section 2]{davenport}) shows that $|\tau(\chi)|=q^{\deg{Q}/2}$, so the claim follows.
\end{proof}
We will also need the following formula for the Gauss sums
$$
\tau(\chi,B) := \sum_{A \pmod{Q}} \chi(A) e_{\mb{F}}\left(\frac{AB}{Q}\right),
$$
particularly when $\chi$ is imprimitive.
\begin{lem} \label{lem_imprim}
Let $Q = Q_1Q_2 \in \mc{M}$, where $(Q_1,Q_2) = 1$ and $Q_2$ is squarefree. Let $\chi$ be a character modulo $Q$, induced by a primitive character $\chi^{\ast}$ modulo $Q_1$. Then for any non-zero $B \in \mb{F}_q[t]$,
$$
\tau(\chi,B) = \tau(\chi^{\ast}) \chi^{\ast}(Q_2)\overline{\chi^{\ast}}(B)\phi((Q_2,B))\mu(Q_2/(Q_2,B))1_{(Q,B)|Q_2}.
$$
\end{lem}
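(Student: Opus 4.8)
The plan is to reduce the computation of $\tau(\chi,B)$ for the imprimitive character $\chi$ to the known evaluation of the Gauss sum $\tau(\chi^{\ast})$ of the primitive character $\chi^{\ast}$ modulo $Q_1$, using the Chinese Remainder Theorem to split the $A$-summation across the coprime moduli $Q_1$ and $Q_2$. First I would write $A \equiv A_1 Q_2 \overline{Q_2} + A_2 Q_1 \overline{Q_1} \pmod{Q}$ with $A_i$ ranging mod $Q_i$, so that $\chi(A) = \chi^{\ast}(A) = \chi^{\ast}(A_1 Q_2)\cdot \mathbf{1}_{(A_2,Q_2)=1}$ (since $\chi$ is induced by $\chi^{\ast}$, it only sees $A$ mod $Q_1$ and vanishes unless $(A,Q_2)=1$). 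On the exponential side, the additive character splits as $e_{\mb{F}}(AB/Q) = e_{\mb{F}}(A_1 B \overline{Q_2}/Q_1)\, e_{\mb{F}}(A_2 B \overline{Q_1}/Q_2)$, using the partial-fractions/Laurent-coefficient description of $e_{\mb{F}}$ from Section~\ref{sec:notation} and the fact that $e_{\mb{F}}$ is additive on the residue of the rational function. This factors $\tau(\chi,B)$ as a product of a sum over $A_1 \pmod{Q_1}$ and a sum over $A_2 \pmod{Q_2}$.

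Next I would evaluate the two factors separately. The $A_1$-sum is $\sum_{A_1 \pmod{Q_1}} \chi^{\ast}(A_1 Q_2) e_{\mb{F}}(A_1 B \overline{Q_2}/Q_1)$; substituting $A_1 \mapsto A_1 \overline{Q_2}\,\overline{B}$ where possible and using the standard identity $\tau(\chi^{\ast}, C) = \overline{\chi^{\ast}}(C)\tau(\chi^{\ast})$ for $(C,Q_1)=1$ (valid because $\chi^{\ast}$ is primitive — this is the twisted Gauss sum identity proved exactly as over $\mb{Z}$), this factor becomes $\chi^{\ast}(Q_2)\overline{\chi^{\ast}}(B)\tau(\chi^{\ast})$, with the understanding that it vanishes unless $(B,Q_1)=1$. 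The $A_2$-sum is $\sum_{\substack{A_2 \pmod{Q_2}\\ (A_2,Q_2)=1}} e_{\mb{F}}(A_2 B \overline{Q_1}/Q_2)$, which is a Ramanujan-type sum over the squarefree modulus $Q_2$; by multiplicativity over the distinct prime factors of $Q_2$ and the elementary prime-modulus evaluation ($\sum_{A \not\equiv 0} e_{\mb{F}}(AC/P) = -1$ if $P \nmid C$ and $= \phi(P)$ if $P \mid C$), this sum equals $\phi((Q_2,B))\,\mu(Q_2/(Q_2,B))$. Multiplying the two factors and collecting the coprimality constraints — $(B,Q_1)=1$ from the first factor together with the divisibility information in the second — gives exactly the stated indicator $\mathbf{1}_{(Q,B)\mid Q_2}$, since $(Q,B)\mid Q_2$ is equivalent to $(Q_1,B)=1$.

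The main obstacle I expect is bookkeeping the additive character $e_{\mb{F}}$ correctly through the CRT splitting: unlike the classical $e(\cdot/Q)$ over $\mb{Z}$, here $e_{\mb{F}}$ is defined via the $t^{-1}$-coefficient of the Laurent expansion of the rational function composed with the field trace, so one must check that the partial-fraction decomposition $1/(Q_1 Q_2) = \overline{Q_2}/Q_1 + \overline{Q_1}/Q_2$ (as rational functions, up to a polynomial part that contributes nothing to the residue at infinity) genuinely yields the claimed factorization $e_{\mb{F}}(AB/Q) = e_{\mb{F}}(A B \overline{Q_2}/Q_1)\,e_{\mb{F}}(A B \overline{Q_1}/Q_2)$, and that replacing $A$ by its residues mod $Q_i$ does not change the relevant Laurent coefficient. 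This is the function-field analogue of a routine fact, and once it is in place the rest is the same character-sum manipulation as over $\mb{Z}$; I would cite~\cite[Section 2]{davenport} for the primitive twisted Gauss sum identity and verify the $e_{\mb{F}}$-factorization directly from the definition in Section~\ref{sec:notation}. A secondary, minor point is handling the case $(Q_2,B) = Q_2$ (i.e. $Q_2 \mid B$) so that $\mu(Q_2/(Q_2,B)) = \mu(1) = 1$ and the Ramanujan sum degenerates to $\phi(Q_2)$, which is consistent with the formula.
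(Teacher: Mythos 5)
Your proposal is correct, and it takes a genuinely different (and more self-contained) route than the paper. The paper simply invokes the function-field analogue of the general formula for Gauss sums of imprimitive characters from Montgomery--Vaughan (their Lemma 5.4), which gives
$\tau(\chi,B) = \tau(\chi^{\ast})\bar{\chi^{\ast}}(B/(Q,B)) \tfrac{\phi(Q)}{\phi(Q/(Q,B))} \mu(Q_2/(Q,B)) \chi^{\ast}(Q_2/(Q,B))$ when $(Q,B)\mid Q_2$ and $0$ otherwise, and then uses squarefreeness of $Q_2$ and $(Q_1,Q_2)=1$ to simplify $(Q,B)=(Q_2,B)$ and $\phi(Q)/\phi(Q/(Q,B))=\phi((Q_2,B))$. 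Your CRT factorization avoids importing that black box: the splitting $AB/Q = A_1B/Q_1 + A_2B/Q_2$ (no inverses needed if you parametrize $A = A_1Q_2 + A_2Q_1$) cleanly separates a primitive twisted Gauss sum mod $Q_1$ from a Ramanujan sum mod $Q_2$, and your evaluations of both factors, the equivalence $1_{(Q,B)\mid Q_2} = 1_{(Q_1,B)=1}$, and the degenerate case $Q_2\mid B$ are all correct. One bookkeeping warning: you have double-counted the factor $\chi^{\ast}(Q_2)$. With your parametrization $A \equiv A_1Q_2\overline{Q_2} + A_2Q_1\overline{Q_1}$ one has $A\equiv A_1\pmod{Q_1}$, so $\chi^{\ast}(A) = \chi^{\ast}(A_1)$, \emph{not} $\chi^{\ast}(A_1Q_2)$; the factor $\chi^{\ast}(Q_2)$ then arises once, from $\tau(\chi^{\ast},B\overline{Q_2}) = \overline{\chi^{\ast}}(\overline{Q_2})\overline{\chi^{\ast}}(B)\tau(\chi^{\ast})$. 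Writing both $\chi^{\ast}(A_1Q_2)$ in the character \emph{and} $\overline{Q_2}$ in the exponential, as you do, would produce $\chi^{\ast}(Q_2)^2$. Choose one convention and the computation lands exactly on the stated formula.
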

\begin{proof}
Following the proof of~\cite[Lemma 5.4]{MVgold} in the function field setting, we find that
$$
\tau(\chi,B) = \tau(\chi^{\ast})\bar{\chi^{\ast}}(B/(Q,B)) \frac{\phi(Q)}{\phi(Q/(Q,B))} \mu(Q_2/(Q,B)) \chi^{\ast}(Q_2/(Q,B))
$$
if $(Q,B)|Q_2$, and $\tau(\chi,B)=0$ otherwise. We focus on the former case. Since $(Q_1,Q_2) = 1$ and $Q_2$ is squarefree, $B$ is coprime to $Q_1 = \text{cond}(\chi)$, $(Q,B) = (Q_2,B)$ and we can simplify the character factors to give $\chi^{\ast}(Q_2)\overline{\chi^{\ast}}(B)$. Furthermore, we have 
$$
\frac{\phi(Q)}{\phi(Q/(Q,B))} = \frac{\phi(Q_2)}{\phi(Q_2/(Q_2,B))} = \phi((Q_2,B)),
$$ 
which implies the claim.
\end{proof}

Now the result about modified characters modulo $Q\neq P^r$ follows in a strong form from the following result.

\begin{prop} \label{prop_lrg_ssd}
Let $f: \mc{M} \to  S^1$ be a modified character modulo $Q\in \mathcal{M}$, associated with a non-principal character $\chi$, induced by a primitive character $\chi^{\ast}$ modulo $Q^{\ast}$. Assume moreover that $Q/Q^{\ast}$ is squarefree and coprime to $Q$.

Let $N \geq 1$ be large. Then for any $1 \leq T \leq N/(10(\deg{Q})^{\omega(Q)+1})$ there is a choice of $H\in [T, T (\deg{Q})^{\omega(Q)}]$ such that
$$
\max_{G_0 \in \mc{M}_{\leq N}} \left|\sum_{G \in I_H(G_0)} f(G)\right| \gg_Q H^{\frac{1}{2}(\omega(Q)-1)}.
$$
\end{prop}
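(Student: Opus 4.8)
The plan is to evaluate the short-interval sum $\sum_{G \in I_H(G_0)} f(G)$ via Fourier/Ramanujan-sum analysis on $\mathbb{F}_q[t]$, exploiting the fact (recalled in the introduction) that for $H \geq \deg{Q}$ every short interval $I_H(G_0)$ captures each residue class modulo $Q$ exactly $q^{H-\deg{Q}}$ times. First I would write $f = \chi \cdot g$ where $g$ is supported on the primes dividing $Q\cdot\mathrm{rad}$ of the finite exceptional set, i.e. $f(P^k) = \chi(P^k)$ for $P \nmid Q$; concretely $f(G) = \chi(G/(G,Q^\infty)) \cdot f((G,Q^\infty))$, so that $\sum_{G\in I_H(G_0)} f(G)$ decomposes as a sum over the ``bad part'' $D \mid Q^\infty$ (with $\deg D$ bounded in terms of $Q$) of $f(D)$ times a sum of $\chi$ over $G' \in I_{H - \deg D}(G_0')$ with $G'$ coprime to $Q$. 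The main term will come from the principal-part contribution of the bad divisors and the structure of $\chi$; but since $\chi$ is non-principal, $\sum_{\deg M < H} \chi(M) = 0$ for $H > \deg Q$, so — exactly as flagged in the strategy section — the leading behaviour must be extracted more carefully.

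The key device is to expand the indicator of a short interval $I_H(G_0)$ using additive characters $e_{\mathbb{F}}$ of $\mathbb{F}_q[t]$ (the analogue of $\sum_{a} e(a\alpha)$): a short interval condition $\deg{G - G_0} < H$ is detected by summing $e_{\mathbb{F}}$ against the ``top coefficients'' of $G$, and after grouping, the relevant exponential sums become Gauss sums $\tau(\chi, B)$ of the (possibly imprimitive) character $\chi$ modulo $Q$. This is where the two lemmas just proved enter: $\tau(\chi^\ast) \neq 0$ for the primitive character $\chi^\ast \bmod Q^\ast$, and Lemma~\ref{lem_imprim} gives the exact evaluation of $\tau(\chi,B)$, including the factor $\mu(Q_2/(Q_2,B))\phi((Q_2,B)) 1_{(Q,B)\mid Q_2}$ that arises from the squarefree coprime part $Q_2 = Q/Q^\ast$. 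Summing over the frequencies $B$ that survive, the modulus of the resulting expression will be a sum of $\omega(Q)$-fold (or $(\omega(Q)-1)$-fold, after removing one degenerate direction) products of such Gauss-sum factors; choosing $G_0$ appropriately (equivalently, choosing the argument $G_0/Q$ so that the relevant exponentials align), and choosing $H \in [T, T(\deg Q)^{\omega(Q)}]$ by a pigeonhole over the finitely many residues of $H$ modulo the degrees of prime powers dividing $Q$, one forces these factors to be simultaneously nonzero and of the same sign, producing a main term of size $\asymp_Q \prod |\tau(\chi^\ast)|^{\cdots} \asymp_Q H^{(\omega(Q)-1)/2}$ — the power $1/2$ per prime factor being precisely the square-root cancellation/size of a Gauss sum.

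More concretely, I expect the cleanest route is: (i) reduce to $\chi$ non-principal with $f$ a modified character and peel off the bounded bad part, picking up a bounded constant depending on $Q$; (ii) for $G$ coprime to $Q$, use orthogonality of Dirichlet characters modulo $Q$ together with the additive-character expansion of the short-interval indicator to write $\sum_{G \in I_H(G_0), (G,Q)=1} \chi(G)$ as a sum over $B \bmod Q$ (or over divisors of $Q$) of $\tau(\chi, B)$ times a geometric-type sum in the top $H - \deg Q$ coefficients — this geometric sum is $q^{H-\deg Q}$ times an indicator that the relevant frequency is ``trivial on the top block'', which is where the flexibility in $H$ is used; (iii) multiplicativity of Gauss sums in the prime-power decomposition $Q = \prod_i P_i^{a_i}$ turns the surviving sum into $\prod_i(\text{local factor at } P_i)$, each of size $\asymp q^{a_i \deg P_i / 2}$ or $O(1)$ depending on a congruence condition on $H$; (iv) a pigeonhole / greedy choice of $H$ in the stated dyadic-ish range $[T, T(\deg Q)^{\omega(Q)}]$ makes all but (necessarily) one of the local factors ``large'', netting $H^{(\omega(Q)-1)/2}$; (v) finally choose $G_0 \in \mathcal{M}_{\leq N}$ realizing the residue $G_0 \bmod Q$ and the needed top coefficients, which is possible since $N$ is large and $\deg Q + H \leq N$.

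\textbf{Main obstacle.} The genuinely delicate point is step (iii)–(iv): controlling the $\omega(Q)$-fold product of Gauss-sum local factors and the arithmetic conditions (coming from the $\mu$ and $1_{(Q,B)\mid Q_2}$ factors in Lemma~\ref{lem_imprim}, and from the congruences on $H$ modulo the various $\deg P_i$ and $a_i$) simultaneously, so that one can guarantee a single choice of $H$ in a short range for which the product is both nonzero and not subject to cancellation among the different $B$'s. One must also be careful that the ``lost'' direction is exactly one-dimensional, giving $\omega(Q)-1$ and not $\omega(Q)$ or $\omega(Q)-2$ in the exponent; this is the same phenomenon as the non-vanishing of $\sum_{\deg M < H}\chi(M)$ being forced to zero, which kills one factor's worth of size. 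Everything else — the additive-character expansion, the bounded bad part, the final realization of $G_0$ — is routine function-field bookkeeping.
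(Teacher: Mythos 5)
Your plan diverges from the paper's in two crucial and related ways, and the divergence is not cosmetic: it points to a genuine gap.

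First, you misidentify the source of the growth $H^{(\omega(Q)-1)/2}$. You attribute it to ``the square-root cancellation/size of a Gauss sum,'' writing that the main term has size $\asymp_Q \prod|\tau(\chi^\ast)|^{\cdots} \asymp_Q H^{(\omega(Q)-1)/2}$. But $|\tau(\chi^\ast)| = q^{\deg Q^\ast/2}$ is a constant depending only on $Q$, not on $H$, and the same is true of every factor produced by Lemma~\ref{lem_imprim}; a product of such factors is $O_Q(1)$, so there is no mechanism in your sketch that makes anything grow with $H$. In the actual argument, the growth comes from a completely different source: the number of $Q$-smooth divisors $D$ (i.e.\ $\textnormal{rad}(D)\mid Q$) with $\deg D \geq H$, weighted by $q^{H-\deg D}$. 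For $H$ chosen to be a multiple of $\prod_j r_j d_j$ (with $Q = \prod_j P_j^{r_j}$, $d_j = \deg P_j$), the number of $D$ with $\textnormal{rad}(D)\mid Q$ and $\deg D = H$ is exactly the number of $\mathbf{\alpha}\in\mathbb{N}_0^{\omega(Q)}$ with $\sum_j \alpha_j r_j d_j = H$, which is $\gg_Q H^{\omega(Q)-1}$; the Gauss-sum factors merely give an $O_Q(1)$ prefactor. This also exposes a second error: you assert that the ``bad part'' $D\mid Q^\infty$ has $\deg D$ bounded in terms of $Q$, but $D$ can have degree as large as $N$, and it is precisely the bad divisors with $\deg D \geq H$ that produce the main term.

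Second, the strategy itself is different and substantially harder than what the paper does. You propose to choose $G_0$ pointwise so that the Gauss-sum factors ``align.'' The paper instead bounds the maximum by the $L^2$-average over all $G_0 \in \mc{M}_N$, i.e.\ sets $\mc{T} := q^{-N}\sum_{G_0\in\mc{M}_N}|\sum_{G\in I_H(G_0)} f(G)|^2$, expands the square, and carries out the Ramanujan-sum computation on the doubled sum (this is where Lemma~\ref{lem_imprim} and the coprimality/squarefreeness of $Q/Q^\ast$ enter). The upshot is that after Steps 1--3 of the computation, $\mc{T}$ is, up to $o_{N\to\infty}(1)$, a nonnegative constant times $q^H \sum_{\textnormal{rad}(D)\mid Q,\ \deg D \geq H} q^{-\deg D}$, which is then bounded below by the divisor count above. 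The $L^2$ route completely avoids the need to construct a specific $G_0$ and, more importantly, makes the ``bad divisor'' contribution manifest and positive. In a pointwise argument you would have to control signs of a sum over bad divisors $D$ of unbounded degree, and it is unclear how to do this with the pigeonhole device you sketch (conditions on $H$ modulo the degrees $r_j d_j$); by contrast, the paper's choice $H = T\,r_1 d_1\cdots r_k d_k$ is not a pigeonhole but a deliberate high-divisibility condition ensuring a polynomially growing count of solutions to $\sum_j\alpha_j r_j d_j = H$. Unless you replace the Gauss-sum-size heuristic with the divisor-counting mechanism and, realistically, pass to a second-moment average, I do not see how your proposal closes.
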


\begin{rem}
As we shall see, the assumption that $Q/Q^{\ast}$ be squarefree and coprime to $Q$ is satisfied in our application.
\end{rem}

The proof is based on a careful analysis of \emph{Ramanujan sums}. For $G\in \mathcal{M},H \in \mb{F}_q[t]$, the Ramanujan sum $c_G(H)$ is defined by
$$
c_G(H) := \asum_{A \pmod{G}} e_{\mb{F}}\left(\frac{AH}{G}\right),
$$
where $*$ in the sum denotes summation over invertible residue classes. Ramanujan sums satisfy the relation 
\begin{align}\label{eqq18}
\sum_{D|G} c_D(H) = q^{\deg{G}} 1_{G|H},
\end{align}
so that by M\"{o}bius inversion we get
\begin{equation}\label{eq_ramsum_exp}
c_G(H) = \sum_{E|G} \mu(G/E) q^{\deg{E}} 1_{E|H}.
\end{equation}
\begin{lem} \label{lem_ramsumff}
Let $Q \in \mc{M}$, $\deg{Q}\geq 1$, and let $n \in \mb{Z}$. Then
$$
\sum_{\deg{M} < n} c_Q(M) = \begin{cases} \phi(Q) &\text{ if $n \leq 0$} \\ 0 &\text{ if $n \geq 1$.}\end{cases}
$$
\end{lem}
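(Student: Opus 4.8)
The plan is to prove the identity $\sum_{\deg{M} < n} c_Q(M) = \phi(Q)\cdot 1_{n \le 0} + 0 \cdot 1_{n\ge 1}$ by evaluating the sum of Ramanujan sums term by term using the M\"obius-expansion~\eqref{eq_ramsum_exp}. First I would note that for $n \le 0$ the only polynomial of degree $<n$ is $M = 0$ (since degrees are non-negative, and we adopt the convention $\deg{0} = -\infty$), and $c_Q(0) = \asum_{A \pmod Q} 1 = \phi(Q)$, which handles that case immediately. So the content is in the case $n \ge 1$.

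For $n \ge 1$, I would substitute~\eqref{eq_ramsum_exp} and interchange the order of summation:
\begin{align*}
\sum_{\deg{M} < n} c_Q(M) = \sum_{E \mid Q} \mu(Q/E) q^{\deg{E}} \sum_{\substack{\deg{M} < n \\ E \mid M}} 1.
\end{align*}
The inner sum counts polynomials $M \in \mb{F}_q[t]$ (including $0$, and of arbitrary leading coefficient) of degree $< n$ divisible by $E$; writing $M = EM'$ this is the number of $M'$ with $\deg{M'} < n - \deg{E}$, namely $q^{n - \deg{E}}$ provided $\deg{E} \le n$ (which holds here since $\deg{Q}\ge 1$ is at most $n$ may fail — but then one splits: if $\deg{E} < n$ the count is $q^{n-\deg E}$, while if $\deg E \ge n$ only $M=0$ contributes, giving $1$; however for the clean range $\deg Q < n$ all divisors satisfy $\deg E < n$). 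Assuming first $\deg Q < n$, we get
\begin{align*}
\sum_{\deg{M}<n} c_Q(M) = \sum_{E\mid Q} \mu(Q/E) q^{\deg E} \cdot q^{n-\deg E} = q^n \sum_{E \mid Q} \mu(Q/E) = q^n \cdot 1_{\deg{Q}=0} = 0,
\end{align*}
since $\deg Q \ge 1$. To cover the remaining possibility $1 \le n \le \deg Q$, I would instead argue directly from~\eqref{eqq18}: summing $\sum_{D\mid Q} c_D(M)$ over $\deg M < n$ gives $\sum_{D\mid Q}\sum_{\deg M<n} c_D(M)$ on one side and $\sum_{\deg M < n} q^{\deg Q} 1_{Q\mid M} = 0$ on the other (as no nonzero $M$ of degree $<n\le \deg Q$ is divisible by $Q$, and $M=0$ also contributes $0$ since... — careful: $Q\mid 0$, so $M=0$ does contribute $q^{\deg Q}$). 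This suggests handling $M=0$ separately throughout; alternatively, and more cleanly, restrict to $M$ monic or note the problem likely intends $M$ ranging over a set where the $M=0$ term is either always present (case $n\le 0$) or handled by the stated convention. I would follow whatever convention the paper fixes in Section~\ref{sec:notation} for sums over $\deg{M}<n$.

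The cleanest route, which I would actually write up, is induction on $\omega(Q)$ together with the multiplicativity of $c_Q$ in $Q$, or simply the direct computation above being careful about the $M=0$ term. The main obstacle — really the only subtlety — is bookkeeping the $M = 0$ term and the exact range of $M$ (all of $\mb{F}_q[t]$ versus monic polynomials versus the convention $\deg 0 = -\infty$); once that convention is pinned down, the identity $\sum_{E\mid Q}\mu(Q/E) = 1_{Q=1}$ does all the work and the non-principal/$\deg Q \ge 1$ hypothesis forces the vanishing. I expect the write-up to be three or four lines modulo this convention check.
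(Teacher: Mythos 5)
Your treatment of the two cases $n \le 0$ (only $M=0$ survives, giving $c_Q(0)=\phi(Q)$) and $n \ge \deg{Q}$ (expand via \eqref{eq_ramsum_exp}, interchange, count the $q^{n-\deg{E}}$ multiples of each $E$, and invoke $\sum_{E\mid Q}\mu(Q/E)=0$) is correct and coincides with the paper's own argument; the convention you were unsure of is the one you guessed, namely that $M$ runs over all of $\mb{F}_q[t]$ including $M=0$, with $\deg{0}=-\infty$.

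The range $1 \le n < \deg{Q}$ that you leave hanging is a genuine gap in your write-up, but your discomfort there is justified: the identity is actually \emph{false} throughout that range, so no amount of bookkeeping of the $M=0$ term will close it. Completing your own computation, for $n\ge 1$ one gets
\[
\sum_{\deg{M}<n} c_Q(M) \;=\; q^{n}\sum_{\substack{E\mid Q\\ \deg{E}\le n}}\mu(Q/E)\;+\;\sum_{\substack{E\mid Q\\ \deg{E}> n}}\mu(Q/E)\,q^{\deg{E}},
\]
which vanishes once $n\ge \deg{Q}$ but never before: by orthogonality of additive characters on the space $\{M:\deg{M}<n\}$ the sum equals $q^{n}\,|\{A\bmod Q:(A,Q)=1,\ \deg{A}<\deg{Q}-n\}|$, and for $1\le n<\deg{Q}$ this set contains the $q-1$ nonzero constants (e.g.\ for $Q=P$ irreducible of degree $2$ and $n=1$ the sum is $q^2-q\neq 0$). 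Your fallback via \eqref{eqq18} collapses for exactly the reason you half-identify: the $M=0$ term makes the right-hand side equal to $q^{\deg{Q}}$ rather than $0$, so it cannot force the individual sums $\sum_{\deg{M}<n}c_D(M)$ to vanish. You should also know that the paper's proof commits precisely the error you were instinctively avoiding: it evaluates $\sum_{\deg{E}\le m<n}q^{m}$ by the geometric-series formula $(q^{n}-q^{\deg{E}})/(q-1)$ even for divisors $E$ with $\deg{E}> n$, where that sum is empty. So the honest conclusion is that the lemma is valid only for $n\le 0$ or $n\ge\deg{Q}$; your argument is complete exactly on the set where the statement is true, and the "remaining possibility" you could not close cannot be closed.
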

\begin{proof}
Let $S(n)$ denote the sum on the left-hand side. Then
\begin{align*}
S(n) = \sum_{\deg{M} < n} c_Q(M) = c_Q(0) + \sum_{0 \leq m < n} \sum_{\deg{M} = m} c_Q(M),
\end{align*}
where the sum on the right-hand side is interpreted as empty unless $n \geq 1$. If $n \leq 0$, we are done since $c_Q(0) = \phi(Q)$, so suppose $n \geq 1$. 

Expanding $c_Q(M)$ using~\eqref{eq_ramsum_exp}, we get
\begin{align*}
S(n) &= c_Q(0) + \sum_{E|Q} \mu(Q/E)q^{\deg{E}} \sum_{0 \leq m < n} \sum_{\substack{\deg{M} = m \\ E|M}} 1 \\
&= c_Q(0) + \sum_{E|Q} \mu(Q/E) q^{\deg{E}} \sum_{0 \leq m < n} (q-1)q^{m-\deg{E}} 1_{m \geq \deg{E}} \\
&= c_Q(0) + (q-1) \sum_{E|Q} \mu(Q/E) \sum_{\deg{E} \leq m < n} q^m.
\end{align*}
Summing the geometric series, using $\sum_{E|Q}\mu(Q/E) = 0$ for $\deg{Q}\geq 1$ and~\eqref{eq_ramsum_exp}, we get
\begin{align*}
S(n) = c_Q(0) + \sum_{E|Q}\mu(Q/E) (q^{n} - q^{\deg{E}}) = c_Q(0) - \sum_{E|Q}\mu(Q/E) q^{\deg{E}} = c_Q(0) - c_Q(0) = 0.
\end{align*}
This completes the proof of the claim.
\end{proof}

\subsubsection{Proof of Proposition~\ref{prop_lrg_ssd}}
Since $\chi\pmod Q$ is non-principal, we have  $\deg{Q} \geq 1$. Write $Q = P_1^{r_1}\cdots P_k^{r_k}$ where the $P_j$ are all distinct, and set $d_j := \deg{P_j}$ for all $j$. Suppose $f:\mc{M} \to S^1$ is completely multiplicative, with $f(P) = \chi(P)$ for all $P \neq P_j$. We put $H = Tr_1d_1\cdots r_kd_k$ and observe that we have the inequalities
$$
T \leq H \leq T (\max_{1 \leq j \leq k} r_jd_j)^{\omega(Q)} \leq T (\deg{Q})^{\omega(Q)},
$$
as required.

Let $N > 10T(\deg{Q})^{\omega(Q)+1} \geq 10H\deg{Q}$. Then
\begin{align}
\max_{G_0 \in \mc{M}_{\leq N}} \left| \sum_{G \in I_{H}(G_0)} f(G)\right|^2 &\geq q^{-N} \sum_{G_0 \in \mc{M}_N} \left|\sum_{\deg{M} < H} f(G_0+M) \right|^2 \nonumber\\
&= q^{-N} \sum_{G_0 \in \mc{M}_N} \left|\sum_{\text{rad}(D)|Q} f(D) \sum_{\deg{M} < H} \chi\left(\frac{G_0+M}{D}\right)1_{D|(G_0+M)} \right|^2 =: \mc{T}. \label{eq:mcTIntro}
\end{align}
We will show the following lower bound for $\mc{T}$.
\begin{lem}\label{lem:mainModChar}
Assume the hypotheses of Proposition~\ref{prop_lrg_ssd}, and write $Q_S := Q/Q^{\ast}$. Then
$$
\mc{T} \geq \frac{\phi(Q)}{q^{\deg{Q}}} \left(\frac{\phi(Q_S)}{q^{\deg{Q_S}}}\right)^2\prod_{P|Q_S}\left|1-f\bar{\chi^{\ast}}(P)q^{-\deg{P}}\right|^{-2} \left(q^H \sum_{\substack{\textnormal{rad}(D)|Q \\ \deg{D}\geq H}} q^{-\deg{D}}\right) + o_{N \ra \infty}(1).
$$
\end{lem}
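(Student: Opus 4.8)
The plan is to evaluate the logarithm-free second moment $\mc{T}$ essentially on the nose: for $H$ and $Q$ held fixed and $N \ra \infty$, one isolates the displayed quantity as a main term and absorbs everything else into $o_{N \ra \infty}(1)$.

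The first task is to expand the square. Writing $\sigma_D(G_0) := \sum_{\deg{M} < H} \chi((G_0+M)/D)\, 1_{D\mid (G_0+M)}$ for $\textnormal{rad}(D)\mid Q$, one has $\mc{T} = \sum_{\textnormal{rad}(D),\textnormal{rad}(D')\mid Q} f(D)\overline{f(D')}\left(q^{-N}\sum_{G_0\in\mc{M}_N}\sigma_D(G_0)\overline{\sigma_{D'}(G_0)}\right)$; equivalently, after the change of variables turning short-interval sums into correlations, $\mc{T} = q^{H-N}\sum_{\deg{M}<H}\sum_{G\in\mc{M}_N} f(G)\overline{f(G+M)}$, whose $M=0$ term alone contributes $q^H$. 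Next one must understand $\sigma_D(G_0)$. For $\deg{D}<H$ the reparametrization $M=m_0+DK$ with $m_0\equiv -G_0\pmod{D}$, $\deg{m_0}<\deg{D}$, turns $\sigma_D(G_0)$ into an incomplete character sum of $\chi$ over a box of size $q^{H-\deg{D}}$; since $\chi$ is non-principal this vanishes identically once $H-\deg{D}\geq\deg{Q}$, so only $\textnormal{rad}(D)\mid Q$ with $\deg{D}>H-\deg{Q}$ contribute. For the surviving $D$ I would use the Gauss-sum expansion $\chi(n)=q^{-\deg{Q}}\sum_{B\bmod{Q}}\tau(\chi,B)\,e_{\mb{F}}(-Bn/Q)$, with $\tau(\chi,B)$ supplied by Lemma~\ref{lem_imprim}, together with the elementary identity $\sum_{\deg{K}<J}e_{\mb{F}}(-BK/Q)=q^{J}1_{\deg{B}<\deg{Q}-J}$, to put $\sigma_D(G_0)$ in a clean closed form depending on $G_0$ only through a short residue; when $\deg{D}\geq H$ there is at most one admissible $M$, so $\sigma_D(G_0)$ is a single character value, nonzero precisely when $D$ is the full $Q$-part of $G_0+m_0$ and $\deg{m_0}<H$.

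One then carries out the resulting finite sum over $D,D'$. Here one decomposes each of $G$ and $G+M$ into its $Q$-smooth part and its coprime-to-$Q$ part: the congruences modulo $Q$ supply the density $\phi(Q)/q^{\deg{Q}}$ (with the Gauss factors recombining through $|\tau(\chi^{\ast})|^2=q^{\deg{Q^{\ast}}}$, the primitive Gauss-sum lemma); the interval sum over $M$, reorganized via the Ramanujan-sum cancellation $\sum_{\deg{M}<n}c_D(M)=0$ for $n\geq 1$ of Lemma~\ref{lem_ramsumff}, forces the ``pivot'' $Q$-smooth modulus to have degree $\geq H$, leaving exactly $q^H\sum_{\textnormal{rad}(D)\mid Q,\ \deg{D}\geq H}q^{-\deg{D}}$; and the remaining free sum over the $Q_S$-smooth parts ($Q_S=Q/Q^{\ast}$) of $G$ and of $G+M$ is a geometric Euler product in the numbers $f\overline{\chi^{\ast}}(P)$, $P\mid Q_S$ — this is where complete multiplicativity of $f$ enters — producing $(\phi(Q_S)/q^{\deg{Q_S}})^2\prod_{P\mid Q_S}|1-f\overline{\chi^{\ast}}(P)q^{-\deg{P}}|^{-2}$, which multiplies with the $Q$-density to give the stated constant $\tfrac{\phi(Q)}{q^{\deg{Q}}}(\tfrac{\phi(Q_S)}{q^{\deg{Q_S}}})^2\prod_{P\mid Q_S}|1-f\overline{\chi^{\ast}}(P)q^{-\deg{P}}|^{-2}$. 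Throughout, the hypotheses that $Q/Q^{\ast}$ be squarefree and coprime to $Q^{\ast}$ are exactly what make the Chinese-Remainder splitting of $Q$-smooth parts into a $Q^{\ast}$-part and a $Q_S$-part legitimate and let Lemma~\ref{lem_imprim} apply. Every error term — mismatches between residue counts and densities, and the leftover non-principal character sums — is then a sum over monic polynomials of degree $\asymp N$ with $H,Q$ fixed, hence $o_{N\ra\infty}(1)$ by the Riemann Hypothesis for $\mb{F}_q[t]$ in the form of~\cite[Corollary 3.7]{KMT_FFChowla} and by Delange's theorem over function fields~\cite[Theorem 1.4.1]{KluThe}, just as in the earlier reductions of this section.

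The hard part is the bookkeeping in the third step. The various groups of terms — diagonal $D=D'$ with $\deg{D}\geq H$, diagonal with $H-\deg{Q}<\deg{D}<H$, and off-diagonal $D\neq D'$ — are all of comparable size $\asymp_Q q^H\sum_{\textnormal{rad}(D)\mid Q,\ \deg{D}\geq H}q^{-\deg{D}}\asymp_Q H^{\omega(Q)-1}$, so one cannot simply discard ``lower-order'' pieces; instead one must track the Gauss sums, the congruence densities, and the $Q_S$-local Euler products simultaneously and verify that they recombine into precisely the claimed constant, and that the non-principal contributions (which should be equidistributed, hence cancel) only serve to leave the asserted lower bound rather than degrade it. The subsidiary obstacle is the routine but lengthy verification that all the accumulated errors are genuinely $o_{N\ra\infty}(1)$, uniformly in the bounded auxiliary data.
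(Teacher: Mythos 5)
Your proposal correctly identifies the three main ingredients that the paper uses — the Gauss-sum expansion of $\chi$ supplied by Lemma~\ref{lem_imprim}, the Ramanujan-sum cancellation of Lemma~\ref{lem_ramsumff} forcing $\deg{D}\geq H$, and a local Euler product over the primes dividing $Q_S$. Your preliminary reduction (that $\sigma_D(G_0)$ vanishes identically once $H-\deg{D}\geq\deg{Q}$, that for $\deg{D}\geq H$ at most one $M$ survives, and the additive orthogonality identity) is correct and is a reasonable reorganization of the opening of the paper's argument; the paper instead handles all of this uniformly by parametrizing via $D=(D_1,D_2)$, $\widetilde{D}_j=D_j/D$ and letting the Ramanujan identity $\sum_{\deg{M'}<n}c_{Q/F}(M')=\phi(Q/F)1_{n\leq 0}$ impose $\deg{D}\geq H$ in one stroke.

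The genuine gap is in your final step, and it is not merely bookkeeping. Your proposal asserts that decoupling the $Q_S$-smooth parts of $G$ and $G+M$ produces a single Euler product equal to
$\left(\phi(Q_S)/q^{\deg{Q_S}}\right)^2\prod_{P\mid Q_S}\bigl|1-f\bar{\chi^{\ast}}(P)q^{-\deg{P}}\bigr|^{-2}$,
and that the leftover terms "should be equidistributed, hence cancel" and "only serve to leave the asserted lower bound." That is not what happens. The $Q$-smooth parts of $G$ and $G+M$ are genuinely coupled (they share the gcd $D$), and when this coupling is resolved the exact evaluation of $\mc{T}$, up to $o_{N\to\infty}(1)$, is not the displayed constant times $q^H\sum_{\textnormal{rad}(D)\mid Q,\ \deg{D}\geq H}q^{-\deg{D}}$; it is the larger quantity
\[
\frac{\phi(Q)}{q^{\deg{QQ_S}}}\sum_{G\mid Q_S}\frac{\phi(G)^2}{q^{\deg{G}}}
\Bigl|\sum_{\textnormal{rad}(D')\mid G}\frac{f\bar{\chi^{\ast}}(D')}{q^{\deg{D'}}}\Bigr|^2
\Bigl|\sum_{\textnormal{rad}(D'')\mid Q_S/G}\mu\Bigl(\tfrac{Q_S/G}{\textnormal{rad}(D'')}\Bigr)\phi(\textnormal{rad}(D''))\frac{f\bar{\chi^{\ast}}(D'')}{q^{\deg{D''}}}\Bigr|^2
\cdot\Bigl(q^{H}\sum_{\substack{\textnormal{rad}(D)\mid Q\\\deg{D}\geq H}}q^{-\deg{D}}\Bigr),
\]
a sum over \emph{all} divisors $G\mid Q_S$ whose summands are manifestly nonnegative (each is $\phi(G)^2/q^{\deg{G}}$ times two absolute-value squares). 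The constant in the lemma is exactly the $G=Q_S$ term of this sum, and the inequality $\mc{T}\geq\cdots$ is obtained simply by throwing away the other nonnegative summands. Nothing cancels, and your own (correct) observation that the diagonal and off-diagonal pieces are all of size $\asymp_Q H^{\omega(Q)-1}$ is precisely why you cannot get the lower bound without this positivity structure: comparable-size contributions cannot be discarded by invoking equidistribution. So the missing idea is to organize the final computation so that $\mc{T}$ is expressed as a sum of visibly nonnegative local terms over $G\mid Q_S$, and then to take the single term $G=Q_S$.
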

\begin{proof}[Deduction of Proposition~\ref{prop_lrg_ssd} assuming Lemma~\ref{lem:mainModChar}]
Note that the product 
$$
\prod_{P|Q_S}|1-f\bar{\chi^{\ast}}(P)q^{-\deg{P}}|^{-2}
$$
is non-vanishing, and therefore $\gg_Q 1$. From Lemma~\ref{lem:mainModChar}, we thus obtain
\begin{align*}
\mc{T} &\gg_Q q^H\sum_{\substack{\text{rad}(D) |Q \\ \deg{D} \geq H}} q^{-\deg{D}} + o_{N \ra \infty}(1)
\geq \sum_{\substack{\text{rad}(D)|Q \\ \deg{D} = H}} 1 + o_{N \to \infty}(1) \\
&= |\{\mbf{\alpha} \in \mb{N}_0^k : \alpha_1r_1d_1 + \cdots + \alpha_kr_kd_k = H\}| + o_{N \to \infty}(1),
\end{align*}
since $Q = P_1^{r_1}\cdots P_k^{r_k}$ with $d_j = \text{deg}(P_j)$, for each $j$. As $H = Tr_1d_1 \cdots r_k d_k$,
\begin{align*}
&|\{\mbf{\alpha} \in \mb{N}_0^k : \alpha_1r_1d_1 + \cdots + \alpha_kr_kd_k = H\}| \\
&\geq |\{\mbf{\alpha} \in \mb{N}_0^k : \alpha_1r_1d_1+\cdots+\alpha_kr_kd_k = Tr_1d_1\cdots r_kd_k \text{ and } \prod_{\substack{1 \leq i \leq k \\ i \neq j}} r_id_i |\alpha_j \text{ for all } 1\leq j \leq k\}| \\
&= |\{\mbf{\beta} \in \mb{N}_0^k : \beta_1+ \cdots + \beta_k = T\}| = \binom{T+k-1}{k-1} \gg_k T^{k-1} \gg_Q H^{k-1}.
\end{align*}
In particular, we obtain $\mc{T} \gg_Q H^{\omega(Q)-1}$. It thus follows from~\eqref{eq:mcTIntro} that
\begin{align*}
\max_{G_0 \in \mc{M}_{\leq N}} \left| \sum_{G \in I_{H}(G_0)} f(G)\right|^2 \geq \mc{T} \gg_Q H^{\omega(Q)-1},
\end{align*}
as required.
\end{proof}

It therefore remains to prove Lemma~\ref{lem:mainModChar}. 
\begin{proof}[Proof of Lemma~\ref{lem:mainModChar}]
The proof of the lemma is a technical computation, but can be divided into several steps. 

\textbf{Step 1: Reduction to a sum over a hyperplane $\pmod{Q}$.} Note that if $\deg{D} > N$ then the only solution to $D|(G_0+M)$ requires $G_0 + M = 0$, which is impossible since $\deg{M} = H < N = \deg{G_0}$. Thus we may additionally assume that $\deg{D} \leq N$ in the inner sum defining $\mc{T}$. 
Splitting into residue classes modulo $Q$ and then expanding the square (and making the change of variables $M \mapsto -M$ for later convenience), we have
\begin{align*}
\mc{T} &= q^{-N} \sum_{G_0 \in \mc{M}_N} \left|\sum_{\substack{\text{rad}(D)|Q \\ \deg{D} \leq N}} f(D) \sum_{\deg{M} < H} \chi\left(\frac{G_0-M}{D}\right)1_{D|(G_0-M)}\right|^2 \\
&= q^{-N} \sum_{G_0 \in \mc{M}_N} \left|\,\,\asum_{A \pmod{Q}} \chi(A) \sum_{\substack{\text{rad}(D)|Q \\ \deg{D} \leq N}} f(D) \sum_{\deg{M} < H} 1_{D|(G_0-M)}1_{(G_0-M)/D \equiv A \pmod{Q}} \right|^2 \\
&= q^{-N} \asum_{A_1,A_2 \pmod{Q}} \chi(A_1) \bar{\chi}(A_2) \sum_{\substack{\text{rad}(D_j)|Q \\ \deg{D_j} \leq N \\ j = 1,2}} f(D_1)\bar{f}(D_2) \\
& \cdot\sum_{\substack{\deg{M_j} < H \\ j = 1,2}} |\{G_0 \in \mc{M}_N : G_0 \equiv M_j \pmod{D_j} , (G_0-M_j)/D_j \equiv A_j \pmod{Q}, j = 1,2\}|.
\end{align*}

Fix momentarily $D_1,D_2$ with $\text{rad}(D_j) |Q$ for $j = 1,2$, and set $D := (D_1,D_2)$ and $\widetilde{D}_j:= D_j/D$ for $j = 1,2$. Note that the pair of congruences $G_0 \equiv M_j \pmod{D_j}$ for $j = 1,2$ is solvable if and only if $D|(M_2-M_1)$, and provided $\deg{[D_1,D_2]} \leq N$ the general solution has the form 
\begin{align*}
G_0 &= R[D_1,D_2] + \frac{M_1L_2D_2 + M_2L_1D_1}{D}\\
&= R[D_1,D_2] + M_1 + \frac{M_2-M_1}{D}L_1D_1 = R[D_1,D_2] + M_2 - \frac{M_2-M_1}{D}L_2D_2,
\end{align*}
where $L_1,L_2$ are reduced residue classes modulo $[D_1,D_2]$ that satisfy $L_1D_1 + L_2D_2 = D$. Thus, provided that $\deg{[D_1,D_2]} \leq N$, we have
\begin{align*}
&|\{G_0 \in \mc{M}_N : G_0 \equiv M_j \pmod{D_j}, (G_0 - M_j)/D_j \equiv A_j \pmod{Q}, j = 1,2\}| \\
&= \left|\left\{R \in \mc{M}_{N-\deg{[D_1,D_2]}} : \begin{cases} R\widetilde{D}_2 + L_1(M_2-M_1)/D &\equiv A_1 \pmod{Q} \\ R\widetilde{D}_1 - L_2(M_2-M_1)/D &\equiv A_2 \pmod{Q} \end{cases}\right\}\right|,
\end{align*}
We note that even the condition $\deg{[D_1,D_2]Q} < N$ may be assumed in what follows, since the contribution to $\mc{T}$ from those $D_1,D_2$ that lack this is
\begin{align*}
&\ll_Q q^{2H-N} \max_{\deg{M_2} < H} \sum_{\substack{\text{rad}(D_j) | Q \\ \deg{D_1} \leq \deg{D_2} \leq N \\ \deg{[D_1,D_2]Q} \geq N}} |\{G_0 \in \mc{M}_N : G_0 \equiv M_2 \pmod{D_2}\}| \\
&\ll q^{2H} \sum_{\substack{\text{rad}(D_2)|Q \\ \deg{D_2} \geq (N-\deg{Q})/2}} q^{-\deg{D_2}} \sum_{\substack{\textnormal{rad}(D_1)\mid Q\\\deg{D_1}\leq N}}1\\
&\ll_Q q^{2H-N/2} N^{O_Q(1)} = o_{N \to \infty}(1),
\end{align*}
as $2H \leq 2H\deg{Q} < N/5$.  

Earlier we had deduced that $D|(M_2-M_1)$. 
Making the change of variables $M'D = M_2-M_1$, we get
\begin{align*}
\mc{T} &= q^{H-N} \sum_{\substack{\text{rad}(D_j)|Q \\ \deg{Q[D_1,D_2]} < N}} f(D_1)\bar{f}(D_2)\,\, \asum_{A_1,A_2 \pmod{Q}} \chi(A_1)\bar{\chi}(A_2) \\
&\cdot \sum_{\deg{M'} < H-\deg{D}} \left|\left\{R \in \mc{M}_{N-\deg{[D_1,D_2]}} : \begin{cases} R\widetilde{D}_2 + M'L_1 &\equiv A_1 \pmod{Q} \\ R\widetilde{D}_1 - M'L_2 &\equiv A_2 \pmod{Q} \end{cases} \right\}\right| + o_{N \to \infty}(1);
\end{align*}
note that if $\deg{D} \geq H$ the summation contains the choice $M'= 0$ alone. It is easy to verify that the system of congruences 
$$
\begin{cases} R\widetilde{D}_2 + M'L_1 &\equiv A_1 \pmod{Q} \\ R\widetilde{D}_1-M'L_2 &\equiv A_2 \pmod{Q} \end{cases}
$$
is solvable if, and only if, 
$$
\begin{cases} M' &\equiv A_1\widetilde{D}_1 - A_2\widetilde{D}_2 \pmod{Q} \\ R &\equiv L_1A_2 + L_2A_1 \pmod{Q}. \end{cases}
$$ 
Therefore, $\mathcal{T}$ is, up to $o_{N\to \infty}(1)$ error, equal to 
\begin{align*}
&q^{H-N} \sum_{\substack{\text{rad}(D_j)|Q \\ \deg{Q[D_1,D_2]} < N}} f(D_1)\bar{f}(D_2)\,\, \asum_{\substack{A_1\pmod{Q} \\ A_2 \pmod{Q}}} \chi(A_1)\bar{\chi}(A_2) \sum_{\substack{\deg{M'} < H-\deg{D} \\ M' \equiv A_1\widetilde{D}_1-A_2\widetilde{D}_2 \pmod{Q}}}\\
&\quad \quad \sum_{\substack{R \in \mc{M}_{N-\deg{[D_1,D_2]}} \\ R \equiv A_1 L_2 + A_2L_1 \pmod{Q}}} 1\\
&= q^{H} \sum_{\substack{\text{rad}(D_j)|Q \\ \deg{Q[D_1,D_2]} < N}} \frac{f(D_1)\bar{f}(D_2)}{q^{\deg{Q[D_1,D_2]}}}\,\, \asum_{\substack{A_1\pmod{Q} \\ A_2 \pmod{Q}}} \chi(A_1)\bar{\chi}(A_2) \sum_{\substack{\deg{M'} < H - \deg{D} \\ M' \equiv A_1\widetilde{D}_1-A_2\widetilde{D}_2 \pmod{Q}}} 1 .
\end{align*}
Changing variables as $D_1 = D\widetilde{D}_1$ and $D_2 = D\widetilde{D}_2$, and reinstating triples $D,\widetilde{D}_1,\widetilde{D}_2$ with $\deg{Q[D_1,D_2]} = \deg{QD\widetilde{D}_1\widetilde{D}_2} \geq N$, this is equal to

\begin{align*}
&= q^{H} \sum_{\text{rad}(D)|Q} q^{-\deg{D}} \sum_{\substack{\text{rad}(\widetilde{D}_j)|Q \\ (\widetilde{D}_1,\widetilde{D}_2) = 1}} \frac{f(\widetilde{D}_1)\bar{f}(\widetilde{D}_2)}{q^{\deg{Q\widetilde{D}_1\widetilde{D}_2}}} \asum_{\substack{A_1\pmod{Q} \\ A_2 \pmod{Q}}} \chi(A_1)\bar{\chi}(A_2) \sum_{\substack{\deg{M'} < H - \deg{D} \\ M' \equiv A_1\widetilde{D}_1-A_2\widetilde{D}_2 \pmod{Q}}} 1 \\
&+ O(q^{2H-N/3+o_Q(1)}),
\end{align*}
the error term being $o_{N \to \infty}(1)$ since $N > 7H$. 

\textbf{Step 2: Decoupling $\widetilde{D}_1$ and $\widetilde{D}_2$ via Ramanujan sums.}
 By~\eqref{eqq18} and the fact that $\textnormal{rad}(\widetilde{D_j})\mid Q$, inserting additive characters $\pmod{Q}$ to detect the condition $M' \equiv A_1 \widetilde{D}_1 - A_2 \widetilde{D}_2 \pmod{Q}$ yields
\begin{align*}
&\asum_{A_1,A_2 \pmod{Q}} \chi(A_1)\bar{\chi}(A_2) \sum_{\substack{\deg{M'} < H-\deg{D} \\ M' \equiv A_1\widetilde{D}_1-A_2\widetilde{D}_2 \pmod{Q}}} 1 \\
&= q^{-\deg{Q}} \sum_{C \pmod{Q}}\,\, \sum_{\deg{M'} < H-\deg{D}} e_{\mb{F}}\left(\frac{CM'}{Q}\right) \tau(\chi,-C\widetilde{D}_1) \bar{\tau}(\chi,-C\widetilde{D}_2).
\end{align*}
Write $Q = Q^{\ast} Q_S$, where $Q^{\ast}$ is the conductor of $\chi$; by assumption, we have $(Q^{\ast},Q_S) = 1$ and $Q_S$ squarefree. By Lemma~\ref{lem_imprim}, for each $j = 1,2$ we have
\begin{align*}
\tau(\chi,-C\widetilde{D}_j) = \tau(\chi^{\ast}) \phi((Q_S,C\widetilde{D}_j)) \mu(Q_S/(Q_S,C\widetilde{D}_j)) \chi^{\ast}(-Q_S\bar{C\widetilde{D}_j})1_{(C\widetilde{D}_j,Q^{\ast}) = 1}.
\end{align*}
We insert these expressions into the above, using $|\tau(\chi^{\ast})|^2 = q^{\deg{Q^{\ast}}}$. Removing the condition $(\widetilde{D}_1,\widetilde{D}_2) = 1$ by M\"{o}bius inversion and splitting the products $C\widetilde{D}_j$ according to $(C\widetilde{D}_j,Q_S)$, we obtain
\begin{align*}
\mc{T} &= q^{H-\deg{QQ_S}}\sum_{\text{rad}(D)|Q} q^{-\deg{D}} \sum_{E_1,E_2|Q_S} \phi(E_1)\mu\left(\frac{Q_S}{E_1}\right)\phi(E_2)\mu\left(\frac{Q_S}{E_2}\right)  \\
&\cdot \sum_{\substack{\text{rad}(\widetilde{D}_j)|Q_S \\ (\widetilde{D}_1,\widetilde{D}_2) = 1}} \frac{f\bar{\chi^{\ast}}(\widetilde{D}_1)\bar{f}\chi^{\ast}(\widetilde{D}_2)}{q^{\deg{\widetilde{D}_1\widetilde{D}_2}}}\sum_{\deg{M'} < H- \deg{D}}\,\, \sum_{\substack{C \pmod{Q} \\ (C,Q^{\ast}) = 1 \\  E_j = (Q_S,C\widetilde{D}_j), j = 1,2}} e_{\mb{F}}(CM'/Q) + o_{N \ra \infty}(1) \\
&= q^{H-\deg{QQ_S}}\sum_{\text{rad}(D)|Q} q^{-\deg{D}} \sum_{E_j|Q_S} \phi(E_1)\mu\left(\frac{Q_S}{E_1}\right)\phi(E_2)\mu\left(\frac{Q_S}{E_2}\right) \sum_{L|Q_S} \frac{\mu(L)}{q^{2\deg{L}}}  \\
&\cdot \sum_{\substack{\text{rad}(\widetilde{D}_j)|Q_S}} \frac{f\bar{\chi^{\ast}}(\widetilde{D}_1)\bar{f}\chi^{\ast}(\widetilde{D}_2)}{q^{\deg{\widetilde{D}_1\widetilde{D}_2}}}\sum_{\deg{M'} < H- \deg{D}}\,\, \sum_{\substack{C \pmod{Q} \\ (C,Q^{\ast}) = 1 \\  E_j = (Q_S,CL\widetilde{D}_j), j = 1,2}} e_{\mb{F}}(CM'/Q) + o_{N \ra \infty}(1).
\end{align*}

We next define $F := (C,Q_S)$ for each $C$ modulo $Q$. We also decompose $\widetilde{D}_j = D_j'D_j''$, where $\text{rad}(D_j')|[F,L]$ and $\text{rad}(D_j'')|Q_S/[F,L]$, so that $E_j = \text{rad}(D_j'')[F,L]$ for each $j = 1,2$. This leads to the expression 
\begin{align*}
\mc{T} &= q^{H-\deg{QQ_S}} \sum_{\text{rad}(D)|Q}q^{-\deg{D}} \sum_{F | Q_S} \sum_{L|Q_S} \frac{\mu(L)}{q^{2\deg{L}}} \phi([F,L])^2 \sum_{\substack{\text{rad}(D_j')|[F,L] \\ j = 1,2}} \frac{f\bar{\chi^{\ast}}(D_1')\bar{f}\chi^{\ast}(D_2')}{q^{\deg{D_1'D_2'}}} \\
&\cdot \sum_{\substack{\text{rad}(D_j'')|Q_S/[F,L] \\ j = 1,2}} \mu\left(\frac{(Q_S/[F,L])}{\text{rad}(D_1'')}\right)\phi(\text{rad}(D_1''))\mu\left(\frac{(Q_S/[F,L])}{\text{rad}(D_2'')}\right)\phi(\text{rad}(D_2''))\frac{f\bar{\chi^{\ast}}(D_1'')\bar{f}\chi^{\ast}(D_2'')}{q^{\deg{D_1''D_2''}}} \\
&\cdot \sum_{\deg{M'} < H-\deg{D}}\,\, \sum_{\substack{C \pmod{Q} \\ (C,Q^{\ast}) = 1 \\ F = (C,Q_S)}} e_{\mb{F}}\left(\frac{CM'}{Q^{\ast}Q_S}\right).
\end{align*}
Replacing $C$ by $\widetilde{C} := C/F$ in the innermost sum, and noting that $(C/F,Q/F) = 1$ in that case, it follows from Lemma~\ref{lem_ramsumff} (and $\deg{Q/F} \geq \deg{Q^{\ast}} \geq 1$ since $\chi$ is non-principal) that
\begin{align*}
\sum_{\deg{M'} < H-\deg{D}}\,\, \sum_{\substack{C \pmod{Q} \\ (C,Q^{\ast}) = 1 \\ F = (C,Q_S)}} e_{\mb{F}}(CM'/Q) &= \sum_{\deg{M'} < H-\deg{D}}\,\,\, \asum_{\substack{\widetilde{C} \pmod{Q/F}}}e_{\mb{F}}(\widetilde{C}M'/(Q/F)) \\
&= \sum_{\deg{M'} < H-\deg{D}} c_{Q/F}(M') = \phi(Q/F)1_{\deg{D} \geq H},
\end{align*}
for each $F|Q_S$. Inserting this into the expression for $\mc{T}$ then gives
\begin{align*}
\mc{T} &= \left(q^H\sum_{\substack{\text{rad}(D)|Q \\ \deg{D} \geq H}} q^{-\deg{D}}\right) \cdot \frac{\phi(Q^{\ast})}{q^{\deg{QQ_S}}} \sum_{F|Q_S} \phi\left(\frac{Q_S}{F}\right) \sum_{L|Q_S} \frac{\mu(L)\phi([F,L])^2 }{q^{2\deg{L}}}\\
&\cdot \sum_{\substack{\text{rad}(D_j')|[F,L] \\ j =1,2}} \frac{f\bar{\chi^{\ast}}(D_1')\bar{f}\chi^{\ast}(D_2')}{q^{\deg{D_1'D_2'}}}  \sum_{\substack{\text{rad}(D_j'')|\frac{Q_S}{[F,L]} \\ j = 1,2}} \prod_{j=1}^2 \mu\left(\frac{\frac{Q_S}{[F,L]}}{\text{rad}(D_j'')}\right)\phi(\text{rad}(D_j''))\cdot \frac{f\bar{\chi^{\ast}}(D_1'')\bar{f}\chi^{\ast}(D_2'')}{q^{\deg{D_1''D_2''}}}\\
&+ o_{N \ra \infty}(1).
\end{align*}
\textbf{Step 3: Concluding the proof.} Finally, we make one last change of variable $G := [F,L]$. For each $G|Q_S$, we have (using the squarefreeness of $Q_S$ repeatedly)
\begin{align*}
&\sum_{\substack{F,L|Q_S \\ [F,L] = G}} \phi(Q_S/F) \frac{\mu(L)}{q^{2\deg{L}}} = \sum_{R|G} \frac{\mu(R)}{q^{2\deg{R}}} \sum_{L'F' = G/R} \phi\left(\frac{Q_S/R}{F'}\right) \frac{\mu(L')}{q^{2\deg{L'}}} \\
&= \phi\left(\frac{Q_S}{G}\right) \sum_{R|G} \frac{\mu(R)}{q^{2\deg{R}}} \sum_{F'L' = G/R} \phi\left(\frac{G/R}{F'}\right) \frac{\mu(L')}{q^{2\deg{L'}}} = \phi\left(\frac{Q_S}{G}\right) \sum_{R|G} \frac{\mu(R)}{q^{2\deg{R}}} \sum_{L'|G/R} \frac{\phi(L')\mu(L')}{q^{2\deg{L'}}} \\
&= \phi\left(\frac{Q_S}{G}\right) \sum_{R|G} \frac{\mu(R)}{q^{2\deg{R}}} \prod_{P|G/R} \left(1-q^{-\deg{P}}(1-q^{-\deg{P}})\right) = \phi\left(\frac{Q_S}{G}\right)\prod_{P|G} \left(1-q^{-\deg{P}}\right)\\
&= q^{-\deg{G}}\phi(Q_S).
\end{align*}
Applying the change of variables and the above identity into the previous expression for $\mc{T}$, we obtain
\begin{align*}
\mc{T} &= \left(q^H \sum_{\substack{\text{rad}(D)|Q \\ \deg{D} \geq H}} q^{-\deg{D}}\right) \frac{\phi(Q^{\ast})\phi(Q_S)}{q^{\deg{QQ_S}}} \sum_{G|Q_S} \frac{\phi(G)^2}{q^{\deg{G}}} \sum_{\substack{\text{rad}(D_j')|G \\ j = 1,2}} \frac{f\bar{\chi^{\ast}}(D_1')\bar{f}\chi^{\ast}(D_2')}{q^{\deg{D_1'D_2'}}} \\
&\cdot \sum_{\substack{\text{rad}(D_j'')|Q_S/G \\ j = 1,2}} \mu\left(\frac{Q_S/G}{\text{rad}(D_1'')}\right)\phi(\text{rad}(D_1''))\mu\left(\frac{Q_S/G}{\text{rad}(D_2'')}\right)\phi(\text{rad}(D_2''))\frac{f\bar{\chi^{\ast}}(D_1'')\bar{f}\chi^{\ast}(D_2'')}{q^{\deg{D_1''D_2''}}} + o_{N \ra \infty}(1)\\
&= \frac{\phi(Q)}{q^{\deg{QQ_S}}}\sum_{G|Q_S} \frac{\phi(G)^2}{q^{\deg{G}}} \left|\sum_{\text{rad}(D')|G} \frac{f\bar{\chi^{\ast}}(D')}{q^{\deg{D'}}}\right|^2 \left|\sum_{\text{rad}(D'')|Q_S/G} \mu\left(\frac{Q_S/G}{\text{rad}(D'')}\right)\phi(\text{rad}(D'')) \frac{f\bar{\chi^{\ast}}(D'')}{q^{\deg{D''}}}\right|^2 \\
&\cdot \left(q^{H} \sum_{\substack{\text{rad}(D)|Q \\ \deg{D} \geq H}} q^{-\deg{D}}\right) + o_{N \ra \infty}(1) \\
&\geq \frac{\phi(Q)}{q^{\deg{Q}}} \left(\frac{\phi(Q_S)}{q^{\deg{Q_S}}}\right)^2\prod_{P|Q_S}\left|1-f\bar{\chi^{\ast}}(P)q^{-\deg{P}}\right|^{-2} \cdot \left(q^H\sum_{\substack{\text{rad}(D) |Q \\ \deg{D} \geq H}} q^{-\deg{D}}\right) + o_{N \ra \infty}(1),
\end{align*}
where in the last step we used positivity to bound the sum over $G$ from below by the term at $G = Q_S$, and the factorization
\begin{align*}
\sum_{\text{rad}(D')|Q_S} \frac{f\bar{\chi^{\ast}}(D)}{q^{\deg{D}}} = \prod_{P|Q_S} \left(1-f\bar{\chi^{\ast}}(P)q^{-\deg{P}}\right)^{-1}.
\end{align*}
This completes the proof.
\end{proof}

\subsubsection{Modified characters to prime power modulus}

\begin{proof}[Proof of Theorem~\ref{thm_edpff}]
($\Rightarrow$) Suppose $f: \mc{M} \to S^1$ is a completely multiplicative function for which $\mc{S}_f < \infty$. By Proposition~\ref{prop_redtoDirPret}, there is a primitive Dirichlet character $\chi$ modulo $Q'$, a primitive short interval character $\xi$ of length $\nu \geq 0$ and $\theta \in [0,1]$ such that $\mb{D}(f,\chi \xi e_{\theta};\infty) < \infty$. 

We start with the case $Q'=1$. Let $N$ be large and $1\leq H\leq N-\nu-1$. Set $f_1(G) := fe_{-\theta}\bar{\xi}(G)$ for each $G \in \mc{M}$, so that $\mb{D}(f_1,1;\infty) < \infty$. Further, note that $\xi e_{\theta}$ is constant on intervals $I_H(G_0)$ for all $G_0 \in \mc{M}_N$. We thus obtain
\begin{align*}\begin{split}
\max_{G_0 \in \mc{M}_N} \left|\sum_{\deg{M} < H} f(G_0+M)\right|& = \max_{G_0 \in \mc{M}_N} \left|\sum_{\deg{M} < H} f_1(G_0+M)\right|\\ &\geq q^{-N} \sum_{G_0 \in \mc{M}_N}\left|\sum_{\deg{M} < H} f_1(G_0+M)\right|\\
&\geq q^{-N} \left|\sum_{G_0 \in \mc{M}_N}\sum_{\deg{M} < H} f_1(G_0+M)\right|\\
&=q^{H-N}\left|\sum_{G \in \mc{M}_N} f_1(G)\right|, 
\end{split}
\end{align*}
where we used the triangle inequality and the fact that $M + \mc{M}_N = \mc{M}_N$ for all $\deg{M} < H$. 
We now apply Delange's theorem in function fields (see~\cite[Theorem 1.4.1]{KluThe}) to $f_1$, which gives that
$$
q^{H-N}\left|\sum_{G \in \mc{M}_N} f_1(G)\right| = (c+o_{N \ra \infty}(1))q^H,
$$
where, since $f_1$ is 1-pretentious, we have
\begin{align*}
c=\prod_{P\in \mc{P}}(1-q^{-\deg{P}})(1-f_1(P)q^{-\deg{P}})^{-1}\neq 0.    
\end{align*}
It follows directly that $S_f = \infty$, a contradiction.

We are left with the case $Q'\neq 1$, so $\deg{Q'}\geq 1$. We apply Proposition~\ref{prop_redtofin} to $f$ to deduce that
$$
S := \{P : f(P) \neq \chi(P)\xi(P)e_{\theta}(P)\}
$$
is finite. Put $Q := [Q',\prod_{P \in S} P] = Q'Q''$, where $Q''$ is squarefree and coprime to $Q'$. 

Then $f\bar{\xi}e_{-\theta}$ is a modified character modulo $Q$ (as per Remark~\ref{rem_genchar}), and $\chi$ is non-principal with conductor $Q'$. By Proposition~\ref{prop_lrg_ssd} (applied with $T$ being a large constant, so that $H$ is small compared to $N$ and $\xi e_{\theta}$ is constant on $I_H(G_0)$ for any $G_0 \in \mc{M}_N$) we find that $\omega(Q) = 1$. Thus, $Q = Q'$ and $\omega(Q') = 1$, so $\chi$ is a primitive Dirichlet character modulo a prime power. 

To conclude, we thus have $f(G) = \widetilde{\chi}(G) \xi(G) e_{\theta}(G)$ for all $G$, where $\chi$ is a primitive Dirichlet character modulo $Q = P^r$ for some $r \geq 1$ and some prime $P$, $\widetilde{\chi}$ is a modified character corresponding to $\chi$, and $\xi$ has bounded length.

($\Leftarrow$) Conversely, let $f(G) = \widetilde{\chi}(G) \xi(G) e_{\theta}(G)$ for all $G$, where $\chi$ is a primitive Dirichlet character modulo $Q = P^r$ for some $r \geq 1$ and some prime $P$, $\widetilde{\chi}$ is a modified character corresponding to $\chi$, and $\xi$ has length $\nu$. Denote $f_1(G):=\widetilde{\chi}(G)$. As we noted before, $\xi e_{\theta}$ is constant on $I_H(G_0)$ for any $G_0 \in \mc{M}_N$ and $H\leq N-\nu-1$, so  $S_{f}=S_{\widetilde{\chi}}$. Thus it suffices to show that $S_{\widetilde{\chi}}<\infty$. 

Let $H \geq 1$ and suppose $N \geq H$. For any $G_0 \in \mc{M}_{N}$,
\begin{align} 
\sum_{G \in I_H(G_0)} \widetilde{\chi}(G) &= \sum_{k \geq 0} f(P)^{k} \sum_{\deg{M} < H} \chi((G_0-M)/P^k) \nonumber\\
&= \asum_{A \pmod{P^r}} \chi(A) \sum_{k \geq 0} f(P)^k \sum_{\substack{\deg{M} < H \\ M \equiv G_0 \pmod{P^k} \\ (G_0-M)/P^k \equiv A \pmod {P^r}}} 1. \label{eq_ssdomega1}
\end{align}
Consider first the contribution from $k<H/\deg{P}-r$. Making the change of variables $M=B_k(G_0)+P^kM'$ in the inner sum over $M$, where $B_k(G_0)$ is the residue class of $G_0$ mod $P^k$, we see that
\begin{align*}
 \sum_{\substack{\deg{M} < H \\ M \equiv G_0 \pmod{P^k} \\ (G_0-M)/P^k \equiv A \pmod {P^r}}} 1&= \sum_{\substack{\deg{M'}<H-k\deg{P}\\M'\equiv A\pmod{P^r}}}1=\sum_{\substack{\deg{M'}<H-k\deg{P}\\M'\equiv 0\pmod{P^r}}}1,
\end{align*}
which is independent of $A$. Thus, by orthogonality these values of $k$ contribute nothing to~\eqref{eq_ssdomega1}. 

In the range $k > \frac{H}{\deg{P}}$, there is at most one polynomial $M$ that contributes for at most one such value of $k$ (and in this case, $M$ must represent the projection of $G_0$ to $\text{span}_{\mb{F}_q}\{1,\ldots,t^H\}$). This results in a $O(1)$ term. 

It follows that
\begin{align*}
\sum_{G \in I_H(G_0)} f(G) =  \sum_{H/\deg{P}-r \leq k \leq H/\deg{P}} f(P)^k \sum_{\substack{\deg{M} < H \\ M \equiv G_0 \pmod{P^k}}} \chi((G_0-M)/P^k) + O_{r}(1),
\end{align*} 
and estimating each term by the triangle inequality this is $\ll 1$, uniformly over $H$. It follows that 
$$
\limsup_{N \ra \infty} \max_{G_0 \in \mc{M}_N} \left|\sum_{G \in I_H(G_0)} \widetilde{\chi}(G)\right| \ll 1
$$
uniformly over $H$, and hence $S_{\widetilde{\chi}} < \infty$, as claimed.
\end{proof}

\begin{proof}[Proof of Corollary~\ref{short_extr}]
The proof of Corollary~\ref{short_extr} is identical to that of Theorem~\ref{thm_edpff}, save that by the second conclusion in Proposition~\ref{prop_redtoDirPret} we may assume that $\xi$ is quadratic for general $q$ and trivial if $q$ is odd, while $\chi$ is real and $\theta \in \{0,1/2\}$. 
\end{proof}

\section{The Lexicographic Discrepancy}\label{sec_EDP2}

We fix once and for all a lexicographic ordering $\langle \cdot \rangle$ of $\mathbb{F}_q[t]$ (recalling the necessary property that $\langle 0 \rangle = 0$).  Suppose $f: \mc{M} \ra S^1$ is a completely multiplicative function, such that
$$
\sup_{N \geq 1} \left|\sum_{\substack{G \in \mc{M} \\ \langle G \rangle < N}} f(G)\right| < \infty.
$$
We remark that on taking $N = q^n$ for $n \geq 1$, this shows that $\mc{D}_g < \infty$. Taking $N=\langle G_0\rangle$ for any $G_0\in \mathcal{M}_n$, we see that
$$
\sup_{n \geq 1} \sup_{G_0 \in \mc{M}_{n}} \left|\sum_{\substack{G \in \mc{M} \\ \langle G \rangle \leq \langle G_0 \rangle}} f(G)\right| < \infty.
$$
By the triangle inequality, it also follows that for any $h\geq 1$,
$$
\sup_{n \geq 1}  \sup_{G_0 \in \mc{M}_n}\left|\sum_{\substack{G \in \mc{M} \\ \langle G_0\rangle \leq \langle G \rangle < \langle G_0 \rangle+q^h}} f(G) \right| < \infty.
$$
But as pointed out in Section~\ref{sub: proofideas}, the short interval sums $I_h(G_0)$ coincide with the sum in absolute values whenever $n \geq h$ and $t^{h-1}|G_0$. 
Thus, we deduce that $\mc{S}_f < \infty$. By Theorem~\ref{thm_edpff}, we may conclude that $f = \widetilde{\chi}_{\alpha} \xi e_{\theta}$, where $\xi$ is a short interval character of bounded length, $\theta \in [0,1]$ and $\widetilde{\chi}_{\alpha}$ is a primitive modified character with prime power modulus $P^r$, such that 
\begin{align}\label{eq200}
\widetilde{\chi}_{\alpha}(P) = e(\alpha),    
\end{align}
for some $\alpha \in \mb{R}/\mb{Z}$. We will use this notation in the sequel.

We have thus reduced our task to showing the following.  In the sequel we write $\widetilde{\chi} = \widetilde{\chi}_{\alpha}$ for ease of notation.

\begin{prop}\label{prop_lexico}
Let $g:\mathcal{M}\to S^1$ be of the form $g=\widetilde{\chi}\xi e_{\theta}$ with $\widetilde{\chi}$ a primitive modified character associated to a prime power modulus, $\xi$ a short interval character, and $\theta\in \mathbb{R}$. Then we have
\begin{align}\label{eq132}
\sup_{N\geq 1}\left|\sum_{\substack{G\in \mathcal{M}\\\langle G\rangle< N}}g(G)\right|=\infty.     
\end{align}
\end{prop}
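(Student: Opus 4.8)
The plan is to exploit the ``digital'' recursive structure of the lexicographic ordering and reduce to contradiction via a carefully engineered sequence of thresholds $N$. First I would reduce to the case $g = \widetilde{\chi}$: since $\xi$ has bounded length $\nu$ and $e_\theta$ depends only on the degree, both are constant on each ``block'' of polynomials sharing their top $\nu+1$ coefficients, so twisting by $\xi e_\theta$ only changes sums over $\langle G\rangle < N$ by controlled amounts when $N$ is chosen to respect the block structure (i.e.\ $N$ a multiple of $q^{n-\nu}$ at scale $n$); thus $\sup_N |\sum_{\langle G\rangle < N} g(G)| < \infty$ would force $\sup_N |\sum_{\langle G\rangle < N} \widetilde{\chi}(G)| < \infty$, and it suffices to refute the latter. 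Write $P^r$ for the modulus, $d = \deg P$, and $e(\alpha) = \widetilde{\chi}(P)$.

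The core is a recursion for $T(n) := \sum_{G \in \mc{M},\ \langle G\rangle < q^n} \widetilde{\chi}(G) = \sum_{\deg G \le n-1} \widetilde{\chi}(G)$ and, more importantly, for partial sums $\Sigma(N) := \sum_{\langle G\rangle < N} \widetilde{\chi}(G)$ at thresholds $N$ strictly between powers of $q$. Writing $N$ in base $q$ with digits $b_n\cdots b_0$ and peeling off the top digit, $\Sigma(N)$ decomposes as a sum over the contribution of monics $G$ with fixed top coefficients followed by a shifted copy of a lower-scale problem; factoring out powers of $P$ from $G$ (the only prime where $\widetilde\chi$ is ``modified'') and using orthogonality of the Dirichlet character $\chi \pmod{P^r}$ on the remaining digits, one obtains a relation expressing $\Sigma(N)$ in terms of $e(\alpha)^k$ times complete character sums (which vanish) plus boundary terms plus a genuinely recursive main term. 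The key structural input, analogous to the integer Borwein--Choi--Coons phenomenon that $[1,M]$ contains varying numbers of residues mod $p$, is that truncating to $\langle G\rangle < N$ with $N$ \emph{not} a multiple of $q^d$ (or $q^{rd}$) breaks the residue-class uniformity that made the short-interval sums bounded in Theorem~\ref{thm_edpff}; this is precisely what the lexicographic refinement buys us. The plan is to choose a sequence $N_j$ whose base-$q$ digits are engineered so that the recursion accumulates, rather than cancels, the contributions $e(\alpha)^{k_j} c_j$ of incomplete character blocks --- handling separately the cases $\alpha$ rational (where $e(\alpha)$ is a root of unity and one arranges the $k_j$ to land in a fixed favorable residue class, summing a divergent arithmetic-progression-like series) and $\alpha$ irrational (where equidistribution of $k\alpha$ lets one pick indices with $e(\alpha)^{k_j}$ near $1$), in both cases forcing $|\Sigma(N_j)| \to \infty$.

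I expect the main obstacle to be the bookkeeping of the recursion: making the decomposition of $\Sigma(N)$ into (i) vanishing complete sums, (ii) $O(1)$ boundary terms, and (iii) the recursive main term fully rigorous requires tracking the base-$q$ digits of $N$ together with divisibility by powers of $P$ (equivalently, the base-$q^d$ and base-$q^{rd}$ expansions simultaneously), exactly the two-bases difficulty flagged in the introduction after Definition~\ref{def_modified}. The second obstacle is the explicit construction of the sequence $N_j$: one must ensure each new digit choice strictly increases the accumulated main term by a quantity bounded below, uniformly, which needs a quantitative handle on the nonvanishing of the relevant incomplete Gauss/character sums $\sum_{A} \chi(A) e_{\mb F}(\cdot)$ (nonzero by the Gauss sum computations already in the paper) and a pigeonhole/equidistribution argument to align the phases $e(\alpha)^{k_j}$. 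Once those two ingredients are in place, divergence of $|\Sigma(N_j)|$ is immediate, contradicting~\eqref{eq132} and completing the proof of Proposition~\ref{prop_lexico} and hence of Theorem~\ref{thm_lexicographic}.
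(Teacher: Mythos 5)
Your high-level strategy --- reduce away the $\xi e_\theta$ twist, exploit the digital/recursive structure of the lexicographic ordering, then split on whether $e(\alpha)=\widetilde{\chi}(P)$ is a root of unity --- matches the skeleton of the paper's argument, including the use of equidistribution/pigeonholing of $\{k\alpha\}$ in the irrational case. However, there are two genuine gaps that would block the proposal as written.

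First, you do not anticipate the structural split between $P=t$ and $P\neq t$, and in particular you miss the key mechanism that makes the recursion close in the $P\neq t$ case. The paper chooses the scales $m_k$ to be powers of $q$ so that the Frobenius/binomial identity in characteristic $p$ gives
$t^{\phi(P^r)m_k}-1=(P^vG_0)^{m_k}$,
hence $t^{n_k}\equiv t^a\pmod{P^{2rn_{k-1}}}$. This very high $P$-divisibility is what lets one evaluate $\widetilde{\chi}(t^{n_k}-t^a)$ explicitly and carry the recursive step through (see \eqref{eqq116}--\eqref{eqq117b}). Your ``peel off the top digit and use orthogonality'' picture does not produce this: generic digit truncation gives no useful control on the $P$-adic valuation of the boundary polynomial, which is exactly the ``two bases'' obstacle you correctly flag but do not resolve. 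The $P=t$ case needs a separate argument (Lemma~\ref{lem:peqtRed}) precisely because there the $t$-adic and digital structures are the same.

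Second, and more seriously, your treatment of the rational-$\alpha$ case is too optimistic. You expect the nonvanishing of the per-step increment to follow from elementary Gauss-sum nonvanishing, allowing a divergent arithmetic-progression-style sum. In fact, after the recursion is set up, the boundedness hypothesis forces the identity $c_q\Sigma^{\mathcal{M}}_a(\widetilde{\chi})+g_0\chi(t)^a=0$ for \emph{all} $a$, i.e.\ the per-step increment may genuinely vanish. Ruling this out requires converting the identity into the rational-function equation \eqref{eq129} for $L(s,\chi)$ and invoking GRH over function fields to show $L(s,\chi)$ cannot have that shape (save a degenerate case that is handled directly). This is a qualitatively deeper input than ``incomplete Gauss sums are nonzero,'' and without it the rational case of your argument does not close. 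Until both of these are supplied, the proposal remains a correct outline but not a proof.
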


Assume for the sake of contradiction that~\eqref{eq132} fails. Before proceeding to the proof of Proposition~\ref{prop_lexico} let us make some observations. 

Firstly, the function $g$ may be extended naturally to all of $\mathbb{F}_q[t]$ by the formula 
$$
g(G)=g(P)^{v_P(G)}\chi(G/P^{v_P(G)})\xi(G)e(\theta \deg{G}),
$$ 
since $\xi$ and $\chi$ are both defined on all of $\mathbb{F}_q[t]$.

Secondly, we may assume that $\sum_{G\in \mathcal{M}_{n}}g(G)$ is bounded, as otherwise 
\begin{align*}
\sum_{\substack{G\in \mc{M}\\\langle G\rangle< q^{n+1}}}g(G)-\sum_{\substack{G\in \mc{M}\\\langle G\rangle< q^{n}}}g(G)=\sum_{G\in \mathcal{M}_{n}}g(G)    
\end{align*}
is unbounded, implying that the claim~\eqref{eq132} holds. 

The next lemma will allow us to study more precisely the behaviour of long interval sums of modified characters, which will be crucial in the proof of Proposition~\ref{prop_lexico}.

\begin{lem} \label{lem_omega1_lsd}
Let $f: \mc{M} \to S^1$ be a fixed completely multiplicative function. Suppose there exist $\theta \in [0,1]$, a short interval character $\xi$ of length $\nu \geq 0$, and a non-principal Dirichlet character modulo $P^r$, where $P\in \mathcal{P}$ and $r \geq 1$, such that $f(P') = \chi(P')\xi(P')e_{\theta}(P')$ for all $P' \neq P$. 
\begin{enumerate}
    \item  For any $H \geq 1$,
$$
\sum_{M \in \mc{M}_{< H}} f(M) = \sum_{M' \in \mc{M}_{< \nu + r\deg{P}}} \chi\xi(M')e_{\theta}(M') \sum_{0 \leq k < (H-\deg{M'})/\deg{P}} f(P)^k.
$$

\item If $f(P)$ is a $d$th root of unity with $d\geq 1$, then $H \mapsto \sum_{M \in \mc{M}_{< H}} f(M)$ is $d\cdot \deg{P}$-periodic.
\end{enumerate}
\end{lem}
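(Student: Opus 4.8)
The plan is to prove (1) by grouping the monic polynomials $M$ with $\deg{M}<H$ according to the exact power of $P$ dividing them, and then to read (2) off from the resulting formula by a short geometric-series computation.

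\emph{For part (1).} Write each $M\in\mc{M}$ uniquely as $M=P^kM''$ with $k\ge0$ and $P\nmid M''$. Since $f$ is completely multiplicative, $f(M)=f(P)^kf(M'')$; and since $f$ and the completely multiplicative function $\chi\xi e_{\theta}$ agree at every prime $P'\ne P$, they agree at every monic polynomial coprime to $P$, so $f(M'')=\chi\xi e_{\theta}(M'')$. As $\chi(M'')=0$ whenever $P\mid M''$ (the modulus of $\chi$ being $P^r$), the coprimality restriction is automatic and
\[
\sum_{M\in\mc{M}_{<H}}f(M)=\sum_{k\ge0}f(P)^k\sum_{M''\in\mc{M}_{<H-k\deg{P}}}\chi\xi e_{\theta}(M'').
\]
The a priori infinite sum over $k$ collapses once we establish the orthogonality statement $\sum_{M\in\mc{M}_j}\chi\xi(M)=0$ for every integer $j\ge\nu+r\deg{P}$; this is the only substantive point. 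To prove it, partition $\mc{M}_j$ according to the residue class $A$ of $M$ modulo $P^r$ together with the tuple $\vec c\in\mb{F}_q^{\nu}$ of the $\nu$ coefficients of $M$ just below its leading coefficient (meaningful as $j>\nu$). Then $\chi$ depends only on $A$, while $\xi$—being a short interval character of length $\nu$ evaluated on a monic polynomial—depends only on $\vec c$, so $\chi\xi(M)=\chi(A)\xi(\vec c)$ on each class; and for $j\ge\nu+r\deg{P}$ each pair $(A,\vec c)$ is realized by exactly $q^{\,j-\nu-r\deg{P}}$ polynomials $M\in\mc{M}_j$. Hence
\[
\sum_{M\in\mc{M}_j}\chi\xi(M)=q^{\,j-\nu-r\deg{P}}\Bigl(\sum_{A\pmod{P^r}}\chi(A)\Bigr)\Bigl(\sum_{\vec c\in\mb{F}_q^{\nu}}\xi(\vec c)\Bigr)=0,
\]
the first inner factor vanishing because $\chi$ is non-principal. (Equivalently, this says that the $L$-function of the generalized character $\chi\xi$ is a polynomial of degree $\le\nu+r\deg{P}-1$; cf.\ the theory of short interval characters, \cite{Hayes,KMT_FFChowla}.) Granting the orthogonality, the inner sum $\sum_{M''\in\mc{M}_{<H-k\deg{P}}}\chi\xi e_{\theta}(M'')$ equals the sum of $\chi\xi e_{\theta}(M')$ over those $M'\in\mc{M}_{<\nu+r\deg{P}}$ with $\deg{M'}<H-k\deg{P}$; substituting and interchanging the two (finite) sums—so that the constraint $\deg{M'}+k\deg{P}<H$ becomes $0\le k<(H-\deg{M'})/\deg{P}$—yields precisely the identity of (1).

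\emph{For part (2).} By (1) it suffices to show that the truncated sum $K(x):=\sum_{0\le k<x/\deg{P}}f(P)^k$ is invariant under $x\mapsto x+d\deg{P}$, and then to apply this with $x=H-\deg{M'}$ for each $M'$. Now $K(x+d\deg{P})-K(x)$ is a sum of $f(P)^k$ over an index set that is either empty or a block of $d$ consecutive non-negative integers; in the latter case it equals $f(P)^{m}\bigl(1+f(P)+\cdots+f(P)^{d-1}\bigr)=0$, since $f(P)$ is a $d$th root of unity. Therefore $H\mapsto\sum_{M\in\mc{M}_{<H}}f(M)$ is $d\deg{P}$-periodic.

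The step I expect to be the main obstacle is the orthogonality relation $\sum_{M\in\mc{M}_j}\chi\xi(M)=0$ for all $j\ge\nu+r\deg{P}$: it is the mechanism that turns the infinite sum over the $P$-adic valuation $k$ in (1) into a finite sum over $\mc{M}_{<\nu+r\deg{P}}$, and the uniformity in $j$ that it requires is exactly what the theory of short interval (Hayes) characters supplies. The remaining ingredients—the $P$-adic factorization in (1) and the geometric series in (2)—are routine.
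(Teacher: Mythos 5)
Your part (1) is correct and is essentially the paper's own argument: factor out the exact power of $P$, identify $f$ with $\chi\xi e_{\theta}$ on the $P$-free cofactor, and collapse the sum over the cofactor using $\sum_{M\in\mc{M}_j}\chi\xi(M)=0$ for $j\ge\nu+r\deg{P}$. Your counting proof of that orthogonality (equidistribution over $\mc{M}_j$ of the pair consisting of the residue modulo $P^r$ and the top $\nu$ sub-leading coefficients) is a valid variant of the paper's, which instead partitions $\mc{M}_j$ into short intervals $I_{j-\nu}(G)$ on which $\xi$ is constant and applies orthogonality of $\chi$ on each piece.

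Part (2) has a genuine gap. The identity $1+f(P)+\cdots+f(P)^{d-1}=0$ fails when $f(P)=1$: the sum equals $d$, not $0$. The lemma explicitly permits $d=1$, i.e.\ $f(P)=1$, and this is precisely the case invoked later in the paper (in the $P=t$ subcase of Proposition~\ref{prop_lexico}, the lemma is applied after establishing $\widetilde{\chi}(t)=1$ in order to conclude that $n\mapsto\sum_{G\in\mc{M}_{<n}}\widetilde{\chi}(G)$ is constant). When $f(P)=1$ your $K$ satisfies $K(x+d\deg{P})-K(x)=d$ for $x>0$, so the difference of the partial sums over one alleged period equals $d\sum_{M'\in\mc{M}_{<\nu+r\deg{P}}}\chi\xi e_{\theta}(M')$, and a separate argument is required to show that this quantity vanishes; the paper indeed treats $f(P)=1$ as a distinct case, using $F(n+1)=F(n)+1$ together with the orthogonality relations for $\chi\xi$. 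There is also a smaller boundary issue (shared with the paper's own terse treatment of the case $f(P)\neq 1$): when $1\le H<\deg{M'}$, which can occur for $H<\nu+r\deg{P}$, the index set underlying $K(H-\deg{M'}+d\deg{P})-K(H-\deg{M'})$ is a block of fewer than $d$ integers starting at $k=0$, so it is not covered by your ``empty or a full block of $d$ consecutive integers'' dichotomy; you should either restrict the periodicity claim to $H\ge\nu+r\deg{P}$ or handle these terms explicitly.
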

\begin{proof}
(1) We have
\begin{align*}
\sum_{M \in \mc{M}_{< H}} f(M) &= \sum_{m < H} \sum_{M \in \mc{M}_m} f(M) \\
&= \sum_{m < H} e(\theta m)\sum_{0 \leq k \leq m/\deg{P}} (fe_{-\theta})(P)^k \sum_{\substack{M \in \mc{M}_m \\ M \equiv 0 \pmod{P^k}}} \chi\xi(M/P^k) \\
&= \sum_{m < H} e(\theta m) \sum_{0 \leq k \leq m/\deg{P}} (fe_{-\theta})(P)^k \sum_{M' \in \mc{M}_{m-k\deg{P}}} \chi\xi(M').
\end{align*}
Swapping orders of summation, this equals to
\begin{align*}
&\sum_{0 \leq k < H/\deg{P}} f(P)^k \sum_{k \deg{P} \leq m < H} e(\theta(m-k\deg{P})) \sum_{M' \in \mc{M}_{m-k\deg{P}}} \chi\xi(M') \\
&= \sum_{0 \leq k < H/\deg{P}} f(P)^k \sum_{0 \leq j < H-k\deg{P}} e(j\theta) \sum_{M' \in \mc{M}_j}\chi\xi(M') \\
&= \sum_{0 \leq j < H} e(j\theta) \left(\sum_{M' \in \mc{M}_j} \chi\xi(M')\right)\sum_{0 \leq k < (H-j)/\deg{P}} f(P)^k.
\end{align*}
If $j \geq \nu + r\deg{P}$ then the sum over $M'$ is 0, as is seen by partitioning $\mc{M}_j$ into short intervals of the form $I_{j-\nu}(G)$ and using the orthogonality of Dirichlet characters. Thus, the above simplifies to
\begin{align}\label{eq133}
&\sum_{0 \leq j < \nu + r\deg{P}} e(j\theta) \sum_{M' \in \mc{M}_j} \chi\xi(M') \sum_{0 \leq k < (H-j)/\deg{P}} f(P)^k\nonumber\\
&= \sum_{M' \in \mc{M}_{< \nu + r\deg{P}}} \chi\xi(M')e_{\theta}(M') \sum_{0 \leq k < (H-\deg{M'})/\deg{P}} f(P)^k. 
\end{align}
This proves the first claim.

(2) If $f(P)\neq 1$, this follows immediately from (1), since $F(n):=\sum_{0\leq k\leq n}f(P)^k$ is $d$-periodic by the fact that the $d$th roots of unity sum up to $0$. If instead $f(P)=1$, then the claim follows by noting that $F(n+1)=F(n)+1$ and using (1) and the orthogonality relations for $\chi\xi$~\cite[Exercise 5.1.2]{EffHay}.
\end{proof}

Let us now introduce some notation. Denote the partial sums of a function $f: \mb{F}_q[t] \ra \mb{C}$ in the lexicographic ordering over monic and non-monic polynomials by
\begin{align*}
S_N^{\mathcal{M}}(f):=\sum_{\substack{G\in \mathcal{M}\\\langle G\rangle< N}}f(G) \quad  \textnormal{and}\quad S_N(f):=\sum_{\substack{G\in \mathbb{F}_q[t]\\\langle G\rangle< N}}f(G).  
\end{align*}
Similarly denote the partial sums arranged according to degree over monic and non-monic polynomials  by
\begin{align*}
\Sigma_{N}^{\mathcal{M}}(f):=\sum_{\substack{G\in \mc{M}\\\deg{G}< N}}f(G) \quad \textnormal{and}\quad \Sigma_{N}(f):=\sum_{\substack{G\in \mathbb{F}_q[t]\\\deg{G}< N}}f(G).    
\end{align*}
We can express the sum $\Sigma_{n_j}(\widetilde{\chi})$ in terms of the corresponding monic sum $\Sigma_{n_j}^{\mc{M}}(\widetilde{\chi})$ as follows.  Since every non-zero polynomial in $\mathbb{F}_q[t]$ can be uniquely written as $cG$ where $G\in \mathcal{M}$ and $c\in \mathbb{F}_q^{\times}$, for all $n\geq 1$ we have
\begin{align}\label{eqq118}
\Sigma_{n}(\widetilde{\chi})= \Sigma_{n}^{\mathcal{M}}(\widetilde{\chi})\sum_{c\in \mathbb{F}_q^{\times}}\widetilde{\chi}(c).
\end{align}
If $\zeta$ is any generator of $\mathbb{F}_q^{\times}$, then
\begin{align}\label{eqq119}
\sum_{c\in \mathbb{F}_q^{\times}}\widetilde{\chi}(c)=\sum_{0\leq j\leq q-2}\widetilde{\chi}(\zeta)^j=(q-1)1_{\widetilde{\chi}(\zeta)=1}:=c_q,  \end{align}
where we used the fact that $x^{q-1}=1$ for all $x\in \mathbb{F}_q^{\times}$.

Our proof of Proposition~\ref{prop_lexico} distinguishes the case $P = t$ from $P \neq t$.
For the case $P=t$ we begin with the following lemma.
\begin{lem} \label{lem:peqtRed}
Suppose $g = \xi e_{\theta} \widetilde{\chi}$, where $\widetilde{\chi}$ is a modified non-principal character modulo $t^r$, such that $S_N^{\mc{M}}(g) = O(1)$ uniformly over all $N \geq 1$. Then $\widetilde{\chi}(t) = 1$.
\end{lem}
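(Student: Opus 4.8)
First I would pass from the lexicographic hypothesis to one about degree sums. Extend $g$ multiplicatively to all of $\mathbb{F}_q[t]$. Since $S_{q^n}^{\mathcal{M}}(g)=\Sigma_n^{\mathcal{M}}(g)$, boundedness of $S_N^{\mathcal{M}}(g)$ forces $\Sigma_n^{\mathcal{M}}(g)=O(1)$, hence $\sum_{G\in\mathcal{M}_n}g(G)=\Sigma_{n+1}^{\mathcal{M}}(g)-\Sigma_n^{\mathcal{M}}(g)=O(1)$. Because $\deg{t}=1$ and $g$ agrees with the honest twisted character $\psi:=\chi\xi e_\theta$ on polynomials coprime to $t$ (where $\chi$ is the primitive character modulo $t^r$ attached to $\widetilde{\chi}$), the Euler products factorize as $L(u,g)=(1-g(t)u)^{-1}L(u,\psi)$ with $u=q^{-s}$, where $L(u,\psi)=L(ue(\theta),\chi\xi)$ is, by GRH in function fields applied to the primitive non-principal character $\chi\xi$, a polynomial in $u$ all of whose zeros lie on $|u|=q^{-1/2}$; in particular it does not vanish on $|u|=1$. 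A partial fractions computation then gives $\sum_{G\in\mathcal{M}_d}g(G)=R\,g(t)^d$ for all large $d$, with $R=L(1/g(t),\psi)\neq 0$. As $|g(t)|=1$, the case $g(t)=1$ would make $\Sigma_n^{\mathcal{M}}(g)=Rn+O(1)$ unbounded, a contradiction. Hence $g(t)\neq 1$, and likewise $c^{\ast}:=L(1,\psi)=\sum_d\sum_{G\in\mathcal{M}_d}\psi(G)\neq 0$.

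\textbf{The $t$-peeling recursion.} Using $\langle tH\rangle=q\langle H\rangle$ for all $H\in\mathbb{F}_q[t]$, split $\{G\in\mathcal{M}:\langle G\rangle<N\}$ according to whether $t\mid G$. The part with $t\mid G$ is $t\cdot\{H\in\mathcal{M}:\langle H\rangle<\lceil N/q\rceil\}$, contributing $g(t)\,S_{\lceil N/q\rceil}^{\mathcal{M}}(g)$; the part with $t\nmid G$ contributes $T_N:=\sum_{G\in\mathcal{M},\,\langle G\rangle<N}\psi(G)$, where we may drop coprimality since $\psi$ vanishes on multiples of $t$. Thus $S_N^{\mathcal{M}}(g)=g(t)\,S_{\lceil N/q\rceil}^{\mathcal{M}}(g)+T_N$, which already shows $T_N=O(1)$, and iterating (with $\lceil\lceil N/q\rceil/q\rceil=\lceil N/q^2\rceil$, and $T_M=0$ once $M\le\langle 1\rangle$) yields the closed form
\[
S_N^{\mathcal{M}}(g)=\sum_{k\geq 0}g(t)^k\,T_{\lceil N/q^k\rceil}.
\]

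\textbf{Digit expansion and the emergence of $\widetilde{\chi}(t)$.} Write $N=\sum_{i=0}^{L}n_iq^i$ in base $q$ and decompose $\{G\in\mathcal{M}:\langle G\rangle<N\}$ into the classes indexed by the position $j$ at which $\langle G\rangle$ first falls below $N$: in the $j$-th class the coefficients of $G$ at positions $>j$ are frozen, the coefficient at position $j$ runs over those of size $<n_j$, and the coefficients at positions $<j$ are free. Summing $g$ over this class, one factors off $t^{v_t(G)}$; for $j>r$ the free tail runs over a full set of residues modulo $t^r$, so the orthogonality relation $\sum_{a\bmod t^r}\chi(a)=0$ annihilates the $G$ with $v_t(G)<j-r$. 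For the survivors $v_t(G)$ lies within $r$ of $j$, and one checks that the cancellation of the Archimedean factors in $g(t)^{v_t(G)}\psi(G/t^{v_t(G)})$ is exactly the identity $g(t)\,\overline{\xi(t)e(\theta)}=\widetilde{\chi}(t)$; hence the $j$-th class contributes $\widetilde{\chi}(t)^{\,j}$ times a factor $\Xi_j$ depending only on the digits of $N$ in a bounded window around position $j$, together with $O(1)$ boundary terms from the extreme positions $j\approx 0,\,j\approx L$ and from the terms with $v_t(G)$ near $0$ (these latter carrying factors $(\widetilde{\chi}(t)\xi(t))^{\,j}$ and $g(t)^{\,j}$ instead).

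\textbf{Forcing $\widetilde{\chi}(t)=1$ and the main obstacle.} Suppose $\widetilde{\chi}(t)\neq 1$. Taking $N=N_K$ whose base-$q$ digit string consists of a fixed periodic block $B$ of length comparable to $r+\nu$ repeated $K$ times, padded by fixed boundary blocks, the sum over $j$ in Step~3 collapses, up to $O(1)$, to a fixed finite combination of geometric progressions of length $\asymp K$ in the ratios $\widetilde{\chi}(t)$, $\widetilde{\chi}(t)\xi(t)$ and $g(t)$, whose coefficients are explicit character sums over $B$. Using $g(t)\neq1$, $c^{\ast}\neq0$ and the non-degeneracy of $\chi$ one chooses $B$ so that the relevant coefficient is nonzero and resonant with its progression ratio, and concludes that these progressions cannot all stay bounded as $K\to\infty$, contradicting $S^{\mathcal{M}}(g)=O(1)$; therefore $\widetilde{\chi}(t)=1$. \emph{I expect this last step to be the crux:} executing the digit bookkeeping of Step~3 precisely, and — since the short interval and Archimedean characters $\xi,e_\theta$ survive the $t^k$-shift only as factors that are periodic in $j$ rather than constant — engineering the block $B$ so that these periodic factors reinforce rather than cancel the $\widetilde{\chi}(t)$-dependence; one likely splits according to whether $g(t)$ (equivalently $\widetilde{\chi}(t)$) is a root of unity, invoking the periodicity of degree sums from Lemma~\ref{lem_omega1_lsd}(2) in that case.
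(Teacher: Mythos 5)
The decisive part of your argument --- Steps 3 and 4, where $\widetilde{\chi}(t)\neq 1$ is supposed to force an unbounded resonance --- is not actually carried out. You describe the shape of a digit decomposition, defer the bookkeeping, and say one must ``engineer the block $B$ so that these periodic factors reinforce rather than cancel''; you flag this yourself as the crux. That is exactly where the content of the lemma lies, so as written there is a genuine gap. For contrast, the paper sidesteps the three competing ratios $\widetilde{\chi}(t)$, $\widetilde{\chi}(t)\xi(t)$, $g(t)$ by first transferring everything to the \emph{untwisted} sums $S_N(\widetilde{\chi})$ and proving the clean two-sided self-similarity identity
\[
S_{q^mN+M}(\widetilde{\chi}) \;=\; \widetilde{\chi}(t)^{m}\,S_N(\widetilde{\chi}) + S_M(\widetilde{\chi}), \qquad M<q^m,\ M\equiv N\equiv 0 \pmod{q^r}.
\]
It then splits into two cases: if $S_A(\widetilde{\chi})\neq 0$ for some $A\equiv 0\pmod{q^r}$, iterating at $B=A(1+q^{m_1K}+\cdots+q^{m_JK})$ with $e(m_jK\alpha)\approx 1$ gives $|S_B(\widetilde{\chi})|\gg J$, which transfers back to the monic $g$-sums and contradicts boundedness; if instead $S_{q^rN}(\widetilde{\chi})=0$ for all $N$, comparing the consecutive differences at $N=q^{M_1},q^{M_2}$ forces $\widetilde{\chi}(t)^{M_1}=\widetilde{\chi}(t)^{M_2}$ and hence $\widetilde{\chi}(t)=1$. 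Your Step 2 recursion $S_N^{\mc{M}}(g)=g(t)S^{\mc{M}}_{\lceil N/q\rceil}(g)+T_N$ is correct, but it peels one digit at a time and keeps the twist $\xi e_{\theta}$ in play, which is why your Step 3 bookkeeping becomes unwieldy; the paper's identity concatenates whole digit strings at once and involves only $\widetilde{\chi}$.

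Separately, Step 1 contains an error. The Riemann hypothesis for these characters only places the \emph{nontrivial} inverse roots of $L(u,\chi\xi)$ on $|\alpha|=\sqrt{q}$; there may also be unitary zeros on $|u|=1$ (the paper itself cancels such a zero at $z=1$ in the proof of Theorem~\ref{thm_lsdisc_genchar}). Hence neither $R=L(1/g(t),\psi)\neq 0$ nor $c^{\ast}=L(1,\psi)\neq 0$ is justified, and if $R=0$ your deduction that $g(t)\neq 1$ collapses. Since your Step 4 explicitly relies on $g(t)\neq 1$ and $c^{\ast}\neq 0$, this error feeds directly into the unexecuted main step. Note finally that the paper's proof of this lemma uses no $L$-function input whatsoever, so this machinery is both faulty as invoked and unnecessary.
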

\begin{proof}
We observe that if $N,m \geq 1$ and $M < q^m$ then, subject to $M \equiv N \equiv 0 \pmod{q^r}$ we have
\begin{equation} \label{eq:iterqExp}
S_{q^mN + M}(\widetilde{\chi}) = \widetilde{\chi}(t)^{m}S_N(\widetilde{\chi}) + S_M(\widetilde{\chi}).
\end{equation}
To see this, we first decompose
\begin{equation*}
S_{q^mN + M}(\widetilde{\chi}) = \sum_{j \geq 0} \widetilde{\chi}(t)^j \sum_{\lla G' \rra \leq q^{m-j}N + M/q^j} \chi(G').
\end{equation*}

Next, we remark that if $a \geq 0$ is such that $q^a \leq A < q^{a+1}$ then
\begin{align*}
\sum_{\lla G \rra \leq q^r A} \chi(G) &= \sum_{\deg{G} < a + r} \chi(G) + \sum_{q^{r+a} \leq \langle G \rangle \leq q^r A} \chi(G) = \sum_{\lla M \rra \leq (A-q^a)q^r} \chi(t^{r+a}+ M)\\
&= \sum_{\lla M \rra \leq (A-q^a)q^r} \chi(M),
\end{align*}
and so by induction we obtain, for each $j \geq 0$,
$$
\sum_{\lla G' \rra \leq q^{m-j}N + M/q^j} \chi(G') = \sum_{\lla G \rra \leq R_j(N,M)} \chi(G),
$$
where $R_j(M,N) \in \{0,1,\ldots,q^r-1\}$ satisfies $R_j(M,N) \equiv \llf \frac{q^{m} N + M}{q^j} \rrf \pmod{q^r}$. Now, if $m > j$ and $q^r|N$ we have $R_j(M,N) \equiv \llf M/q^j \rrf \pmod{q^r}$, and so
$$
\sum_{0 \leq j < m} \widetilde{\chi}(t)^j \sum_{\lla G \rra \leq M/q^j} \chi(G) = \sum_{\lla G \rra \leq M} \widetilde{\chi}(G) = S_M(\widetilde{\chi}).
$$

Next, suppose $j \geq m$. In this case, $M/q^j < 1$ and $\llf \frac{q^mN + M}{q^j} \rrf = \llf q^{m-j}N\rrf$, since if the floor was one larger this would mean that
$$
1 > \{N/q^{j-m}\} > 1 - \frac{M}{q^j} > 1-1/q^{j-m},
$$
which is impossible. Thus, we have
\begin{align*}
\sum_{j \geq m} \widetilde{\chi}(t)^j \sum_{\lla G' \rra \leq q^{m-j} N + M/q^j} \chi(G') &= \sum_{j \geq m} \widetilde{\chi}(t)^j \sum_{\lla G \rra \leq q^{m-j}N} \chi(G) = \widetilde{\chi}(t)^m \sum_{l \geq 0} \sum_{\ss{\lla G\rra \leq N \\ t^l || G}} \widetilde{\chi}(G)\\
&= \widetilde{\chi}(t)^m S_N(\widetilde{\chi}),
\end{align*}
and~\eqref{eq:iterqExp} follows. 

Now, we iterate~\eqref{eq:iterqExp} as follows. Assume there is $A \equiv 0 \pmod{q^r}$ such that $S_A(\widetilde{\chi}) \neq 0$, and let $K \geq \nu + 1$ be chosen so that $q^K > A$ (with $\nu$ the length of $\xi$). For $J \geq 1,$ let $\{m_j\}_{j \leq J}$ be an increasing sequence of integers for which $|e(m_jK\alpha)-1| < 1/100$ for each $j$. Setting $B := A(1 + q^{m_1K} + \cdots + q^{m_JK})$, we obtain
$$
S_B(\widetilde{\chi}) = S_A(\widetilde{\chi}) + e(m_1K\alpha) S_{(B-A)/q^{m_1K}}(\widetilde{\chi}) = S_A(\widetilde{\chi})\left(1 + e(m_1K\alpha) + \ldots + e(m_JK\alpha)\right).
$$
It follows that if $S_A(\widetilde{\chi}) \neq 0$ then $|S_B(\widetilde{\chi})| \gg J$. We then have
$$
\sum_{\ss{\lla G \rra < B \\ G \in \mc{M}}} g(G) = \sum_{\ss{\lla G \rra < q^{m_JK} \\ G \in \mc{M}}} g(G) + e_{\theta}\xi(t)^{m_J} \left(S_B(\widetilde{\chi}) - S_{q^{m_JK}}(\widetilde{\chi})\right),
$$
and as the left-most two terms are both bounded we obtain that $|S_{q^{m_JK}}(\widetilde{\chi})| \gg J$.  But as $m_JK >r$ can be assured when $J$ is sufficiently large,~\eqref{eq:iterqExp} (with $N = q^r$ and $M = 0$) implies that $|S_{q^r}(\widetilde{\chi})| \gg J$, which is an obvious contradiction as $J \ra \infty$. 

Thus, suppose instead that $S_{q^r N}(\widetilde{\chi}) = 0$ for all $N \geq 1$. In this case, it suffices to notice that then, $$S_{q^r(N+1)}(\widetilde{\chi}) - S_{q^rN}(\widetilde{\chi}) = 0$$ for all $N\geq 1$. Specializing $N_1 = q^{M_1}$ and $N_2 = q^{M_2}$, where $M_1,M_2 \geq r$, we obtain in both cases that
$$
0 = S_{q^r(N_j+1)}(\widetilde{\chi}) - S_{q^rN_j}(\widetilde{\chi})=\sum_{\lla G \rra < q^r} \widetilde{\chi}(t^{M_j + r} + G) = \widetilde{\chi}(t)^{M_j+r} + \sum_{0 \leq l < r} \widetilde{\chi}(t)^l \sum_{\lla G \rra < q^{r-l}} \chi(G),
$$
the double sum on the right-hand side being independent of $j = 1,2$. It follows from this that $\widetilde{\chi}(t)^{M_1} = \widetilde{\chi}(t)^{M_2}$, so choosing e.g., $M_2 = M_1 + 1$ yields the claim $\widetilde{\chi}(t)=1$ in this case.
\end{proof}

\begin{proof}[Proof of Proposition~\ref{prop_lexico} when $P = t$]
Let $\{n_j\}_{1 \leq j \leq k}$ be an increasing sequence of integers satisfying $n_{j+1} > n_{j} + \nu + r$ for each $1 \leq j \leq k-1$. Define
$$
N_j := \langle 1\rangle\sum_{1 \leq i \leq j} q^{n_i}
$$ 
for each $1 \leq j \leq k$; since $\langle \cdot \rangle$ is a bijection on $\mb{F}_q$ and $\langle 0 \rangle = 0$ we note that $N_j > 0$ for each $j \geq 1$. Note also that if $G$ is monic and satisfies $\langle G \rangle < N_j$ then either $\deg{G} < n_j$ or else $G = t^{n_j} + M$, where $\langle M \rangle < N_{j-1}$. In the latter case, since $n_j > n_{j-1} + \nu$ we have $\xi e_{\theta}(t^{n_j}+M) = \xi e_{\theta}(t)^{n_j}$ whenever $\langle M \rangle < N_{j-1}$. Furthermore, if $G \neq 0$ then $\nu_t(t^{n_j} + G) = \nu_t(G)$ and thus by our choice of $n_j$ we have $\widetilde{\chi}(t^{n_j} + G) = \widetilde{\chi}(G)$.  Lemma~\ref{lem:peqtRed} shows that we may assume $\widetilde{\chi}(t) = 1$ and we thus obtain
\begin{align}\label{eq134}
\sum_{\ss{\lla G \rra < N_k \\ G \in \mc{M}}} g(G) &= \sum_{G \in \mc{M}_{<n_k}} g(G) + \xi e_{\theta}(t)^{n_k} \left(\sum_{0 < \lla G \rra < N_{k-1}} \widetilde{\chi}(t^{n_k} + G) + 1\right) \nonumber\\
&=  \xi e_{\theta}(t)^{n_k} S_{N_{k-1}}(\widetilde{\chi}) + O(1).
\end{align}
We similarly have for $1 \leq m \leq k-1$ that
\begin{align*}
S_{N_m}(\widetilde{\chi}) = \sum_{\ss{\deg{G} < N_m}} \widetilde{\chi}(G) + 1 + \sum_{0 < \lla G \rra < N_{m-1}} \widetilde{\chi}(t^{n_m} + G) &= \Sigma_{N_m}(\widetilde{\chi}) + 1 + S_{N_{m-1}}(\widetilde{\chi}),
\end{align*}
and on iterating this we get
\begin{align}\label{eq135}
S_{N_m}(\widetilde{\chi}) = \sum_{1 \leq j \leq m}\left( \Sigma_{N_j}(\widetilde{\chi}) + 1\right) + O(1).    
\end{align}

We  now deduce from~\eqref{eq134},~\eqref{eq135} that
$$
\left|\sum_{1 \leq j \leq k-1} (\Sigma_{N_j}(\widetilde{\chi}) + 1) \right| = O(1).
$$
On the other hand, we have $\Sigma_{N_j}(\widetilde{\chi}) = c_q \sum_{G \in \mc{M}_{<N_j}} \widetilde{\chi}(G)$ (where $c_q$ is given by~\eqref{eqq119}), and by Lemma~\ref{lem_omega1_lsd}(2) the map $n \mapsto \sum_{G \in \mc{M}_{<n}} \widetilde{\chi}(G)$ is constant (since $\deg t = 1$ and $\widetilde{\chi}(t) = 1$). Thus, we in fact obtain that
$$
(k-1)\left|c_q \sum_{G \in \mc{M}_{<n}} \widetilde{\chi}(G) + 1\right| = O(1),
$$
for any $n \geq 1$. Taking $n = 1$, we see that $c_q \sum_{G \in \mc{M}_{<1}}\widetilde{\chi}(G) = c_q \chi(1) \neq -1$ in any case. We obtain the contradiction $k \ll 1$, and the claim is proved.
\end{proof}
We will split the remaining case $P \neq t$ into two subcases depending on whether $\alpha \in \mb{Q}$ or not (recall from~\eqref{eq200} that $e(\alpha)=\widetilde{\chi}(P)$). Our argument in both subcases has a common setup that we introduce presently.

Pick a sequence $(m_k)_{k\geq 1}$ such that $m_k - m_{k-1} \geq 10 \nu$, and let $a \geq 1$ be an integer to be chosen later, which is bounded in terms of $\alpha$ and $P^r$. 
Let $(n_k)_{k\geq 1}=(m_k \phi(P^r)+a)_{k\geq 1}$. We assume furthermore that $m_k$ is chosen so that $m_k \geq 2n_{k-1}$, so e.g., $m_k/m_{k-1} \geq 10 \nu \phi(P^r)$ is sufficient.
As in the case $P = t$, define a sequence $(N_k)_{k\geq 1}$ by
\begin{align}\label{eq:Nkdef}
N_k= \langle 1\rangle \sum_{j=1}^k q^{n_j}.   
\end{align}

Note that, by Euler's theorem over $\mathbb{F}_q[t]$, we have 
\begin{align*}
t^{\phi(P^r)}\equiv 1\pmod{P^r}.     
\end{align*}
This means that 
\begin{align} \label{eq:EulerExp}
t^{\phi(P^r)}= 1 + P^v G_0 
\end{align}
where $v \geq r$ and $G_0\in \mc{M}$ is coprime to $P$. If $m_k$ is chosen to be a power of $q$ then by the binomial formula,
\begin{equation}\label{eqq116}
t^{n_k-a} = (1+P^vG_0)^{m_k} = 1 + (P^vG_0)^{m_k}.
\end{equation}
Since, by  assumption, $m_k \geq 2n_{k-1}$ we obtain that 
\begin{align}\label{eqq109}
t^{n_k} \equiv t^a \pmod{P^{2rn_{k-1}}}. 
\end{align}
The fact that $t^{n_k-a}-1$ is highly divisible by $P$ will be used crucially in the sequel. 
We now split our sum, similarly as in~\eqref{eq134}, as
\begin{align*}
S_{N_k}^{\mc{M}}(g)&=\sum_{\substack{G\in \mathcal{M}\\\langle G\rangle< q^{n_k}}}g(G)+\sum_{\substack{G\in \mathbb{F}_q[t]\\0\leq \langle G\rangle< N_{k-1}}}g(t^{n_k}+G)\\
&=\Sigma_{n_k}^{\mathcal{M}}(g)+\sum_{\substack{G\in \mathbb{F}_q[t]\\0\leq \langle G\rangle< N_{k-1}}}g(t^{n_k}+G).    
\end{align*}

Note that by~\eqref{eqq109} and the fact that $n_k>n_{k-1}+\nu$, we have
\begin{align*}
g(t^{n_k}+G)=e(\theta n_k)\xi(t)^{n_k}\widetilde{\chi}(t^{a}+G)    
\end{align*}
for all $G\in \mc{M}_{\leq n_{k-1}}\setminus \{-t^a\}$. Also note that the conditions $\langle G+t^a\rangle< n$ and $\langle G\rangle< n$ are equivalent whenever $q^{a+1}\mid n$, and $q^{a+1}\mid N_j$ for all $j\geq 1$. Hence, we obtain
\begin{align}\label{eqq110}\begin{split}
S_{N_k}^{\mc{M}}(g)&=\Sigma_{n_k}^{\mc{M}}(g)+e(\theta n_k)\xi(t)^{n_k}S_{N_{k-1}}(\widetilde{\chi})+g(t^{n_k}-t^a)
\end{split}
\end{align}
Similarly, for all $k\geq 1$,  we have
\begin{align}\label{eqq111}\begin{split}
S_{N_{k}}(\widetilde{\chi})&=\Sigma_{n_{k}}(\widetilde{\chi})+\sum_{\substack{G\in \mathbb{F}_q[t]\\0 \leq \langle G\rangle< N_{k-1}}}\widetilde{\chi}(t^{n_k}+G)\\
&= \Sigma_{n_{k}}(\widetilde{\chi})+S_{N_{k-1}}(\widetilde{\chi})+\widetilde{\chi}(t^{n_{k}}-t^a), \end{split}
\end{align}
where $N_0:=0$. Iterating~\eqref{eqq111} and substituting into~\eqref{eqq110} 
produces
\begin{align}\label{eqq117}\begin{split}
S_{N_k}^{\mathcal{M}}(g)&=e(\theta n_k)\xi(t)^{n_k}\left(\sum_{j=1}^{k-1}\Sigma_{n_{j}}(\widetilde{\chi})+\sum_{j=1}^{k}\widetilde{\chi}(t^{n_j}-t^a)\right)+\Sigma_{n_k}^{\mathcal{M}}(g)+O(1)\\
&=e(\theta n_k)\xi(t)^{n_k}\left(\sum_{j=1}^{k-1}\Sigma_{n_{j}}(\widetilde{\chi})+\sum_{j=1}^{k}\widetilde{\chi}(t^{n_j}-t^a)\right)+O(1),
\end{split}
\end{align}
where we used the assumption $\Sigma_{n_k}^{\mathcal{M}}(g)= O(1)$. This leads to
\begin{align}\label{eqq117b}
\sum_{j=1}^{k-1}\Sigma_{n_{j}}(\widetilde{\chi})+\sum_{j=1}^{k}\widetilde{\chi}(t^{n _j}-t^a)=O(1).
\end{align}
At this point, we may distinguish between the remaining two cases.

\begin{proof}[Proof of Proposition~\ref{prop_lexico} when $P \neq t$] As mentioned, the proof splits into two subcases. 

\subsubsection{\textbf{\textrm{Case 1:}} $P \neq t$, $\alpha \notin \mb{Q}$} Let $G_0$ and $v \geq r$ be as in~\eqref{eq:EulerExp}. Let $d := \text{ord}(\chi(G_0))$ and let $\beta$ be a limit point of the sequence $\{vq^{An}\alpha \pmod{1}\}_{n \geq 1}$, where $A = 20C\nu r \deg{P}$ and $C \geq 1$ is a large integer depending only on $\alpha$ to be chosen below. By the pigeonhole principle we may select $\ell_1 < \cdots <\ell_k$ sufficiently large in terms of $\alpha$ and $\deg{P}$ such that 
$$
\|vq^{A\ell_j}\alpha - \beta\| < \tfrac 1{100},
$$
and so that $q^{A\ell_j} \equiv c_0 \pmod{d}$ for all $1 \leq j \leq k$ and some $1 \leq c_0 \leq d$.
We now set $m_j  = q^{A \ell_j}$, and $a = \gamma \deg{P}$, where $1 \leq \gamma = \gamma(\alpha) \leq C$ is an integer to be chosen later. With this choice, we have $n_j = \phi(P^r) q^{A\ell_j} + \gamma \deg{P}$, and for suitably large $\ell_1$ we have $n_1 \geq 10 C \deg{P} + a \geq 2a$. For $k \geq 1$ we may verify the required inequalities $n_k-n_{k-1} > \nu+r$ and 
$$
m_k \geq 10\phi(P^r) m_{k-1} = 10 (n_{k-1}-a) \geq 5  n_{k-1} \text{ for all } k \geq 1.
$$

By Lemma~\ref{lem_omega1_lsd}(1), for any $j\geq 1$ we have
\begin{align*}
\Sigma_{n_j}^{\mc{M}}(\widetilde{\chi}) &= \sum_{\substack{M' \in \mc{M} \\ \deg{M'} < \nu + r\deg{P}}} \chi(M') \sum_{0 \leq \ell < (n_j-\deg{M'})/\deg{P}} e(\alpha \ell) \\
&= \frac{1}{1-e(\alpha)} \sum_{\substack{M' \in \mc{M} \\ \deg{M'} < \nu + r\deg{P}}} \chi(M') \left(1- e\left(\alpha \left(1 + \llf \frac{n_j-\deg{M'}}{\deg{P}}\rrf\right)\right)\right).
\end{align*}
As the sum over $M'$ (without the bracketed expression) vanishes, we may ignore the term $1$ in the brackets. 
Recalling~\eqref{eqq118},~\eqref{eqq119} and that $\deg{P}|a$, we can rewrite $\Sigma_{n_j}(\widetilde{\chi})$ as
$$
\Sigma_{n_j}(\widetilde{\chi}) = \frac{- e(\alpha(1+\gamma))c_q}{1-e(\alpha)} \sum_{\substack{M' \in \mc{M} \\ \deg{M'} < \nu + r\deg{P}}} \chi (M') e\left(\llf \frac{m_j\phi(P^r) - \deg{M'}}{\deg{P}}\rrf \alpha \right) .
$$
Summing over $j$, we obtain
\begin{align*}
&\sum_{1 \leq j \leq k-1} \Sigma_{n_j}(\widetilde{\chi}) \\
&= e(\gamma \alpha) \cdot \Bigg( -\frac{c_qe(\alpha)}{1-e(\alpha)} \sum_{\substack{M' \in \mc{M} \\ \deg{M'} < \nu + r\deg{P}}} \chi(M') \sum_{1 \leq j \leq k-1} e\left(\llf \frac{m_j\phi(P^r) -\deg{M'}}{\deg{P}}\rrf \alpha\right)\Bigg) \\
&=:  e(\gamma \alpha) \mc{S}(\alpha).
\end{align*}
Note that $\mc{S}(\alpha)$ is independent of $\gamma$. Splitting off $\widetilde{\chi}(t)^a$ in~\eqref{eqq117b}, that expression becomes
\[
e(\gamma \alpha) \mc{S}(\alpha) + \chi(t^{\deg{P}})^{\gamma} \sum_{j=1}^{k}\widetilde{\chi}(t^{n _j-a}-1)=O(1).
\]
Now, since $\alpha \notin \mb{Q}$ and $\chi(t^{\deg{P}})$ is a root of unity of order $d$, it follows that (taking $C = C(\alpha)$ large enough ) an integer $\gamma = \gamma(\alpha) \in [1,C]$ can be chosen so that
\[
|\text{arg}(e(\gamma \alpha) \mc{S}(\alpha)) - \text{arg}(\chi(t^{\deg{P}})^{\gamma} \sum_{j=1}^{k}\widetilde{\chi}(t^{n _j-a}-1))| \in (-\tfrac 1{100}, \tfrac 1{100}).
\]
Hence,~\eqref{eqq117b} in fact implies that
\begin{align}\label{eq:redToJustchi}
\chi(t^{\deg{P}})^{\gamma}\sum_{j=1}^{k}\widetilde{\chi}(t^{n _j-a}-1) = O(1).
\end{align}
Now by construction, for each $1 \leq j \leq k$,
$$
\widetilde{\chi}(t^{n_j-a}-1) = \widetilde{\chi}(P)^{vq^{A\ell_j}}\chi(G_0)^{q^{A\ell_j}} =  e(vq^{A\ell_j}\alpha) \chi(G_0)^{c_0},
$$
and by choice of $\ell_j$ we have that $|e(vq^{A\ell_j}\alpha)-e(\beta)| \leq \frac{2\pi}{100} < \tfrac 1{10}$. It follows that
$$
\left|\sum_{j = 1}^k \widetilde{\chi}(t^{n_j-a}-1)\right| = \left|\chi(G_0)^{c_0} e(\beta) k + \sum_{j = 1}^k \chi(G_0)^{c_0}(e(vq^{A\ell_j}\alpha)-e(\beta))\right|  \geq \tfrac{9 k}{10},
$$
which contradicts~\eqref{eq:redToJustchi} for $k$ sufficiently large.
This completes the proof in Case 1.

\subsubsection{\textbf{Case 2.} $P \neq t$ and $\alpha \in \mb{Q}$} In this case, we may find $b \geq 1$ such that $\widetilde{\chi}(P) = e(\alpha)$ is a $b$th root of unity. We select $m_k = q^{A\ell_k}$, where $A = 2\nu r\deg{P}$ and $\ell_k$ is chosen so that $\ell_k-\ell_{k-1} \geq 10$ and also so that $\widetilde{\chi}(t^{\phi(P^r)q^{A\ell_k}}-1) = g_0$ for all $k \geq 1$, where $g_0 \in S^1$ (for this, it suffices for $q^{A\ell_k}$ to be constant modulo $[b,d]$, where as above $d = \text{ord}(\chi(G_0))$. We also pick $a \in [1,b \deg{P}]$.

By~\eqref{eqq116} we have
\begin{align}\label{eqq112}
\widetilde{\chi}(t^{n_k}-t^a)=\widetilde{\chi}(t)^a\widetilde{\chi}(t^{n_k-a}-1)=\chi(t)^ag_0,
\end{align}
by the definition of $g_0$.

We combine~\eqref{eqq117},~\eqref{eqq118} and~\eqref{eqq119}, using the fact (following from Lemma~\ref{lem_omega1_lsd}(2) and the assumption $\widetilde{\chi}(P)^b=1$) that $n\mapsto \Sigma^{\mathcal{M}}_n(\widetilde{\chi})$ is $b\deg{P}$-periodic and $n_j\equiv a\pmod {b\deg{P}}$. We then see that
\begin{align}\label{eq131}
\sum_{j=1}^{k-1}\Sigma_{n_{j}}(\widetilde{\chi})+\sum_{j=1}^{k}\widetilde{\chi}(t^{n _j}-t^a)&=k\left(c_q\Sigma_{a}^{\mathcal{M}}(\widetilde{\chi})+g_0\chi(t)^a\right)+O(1).
\end{align}
Now, as the left-hand side of~\eqref{eq131} is $O(1)$, we must have 
\begin{align}\label{eq127}
 c_q\Sigma_{a}^{\mathcal{M}}(\widetilde{\chi})+g_0\chi(t)^a=0
\end{align}
for all $a\in [1,b\deg{P}]$. We have $c_q=0$ or $c_q=q-1$, and the first of these is clearly impossible, since $|g_0\widetilde{\chi}(t)^a|=1$. Hence,~\eqref{eq127} becomes
\begin{align}\label{eq127b}
 (q-1)\Sigma_{a}^{\mathcal{M}}(\widetilde{\chi})+g_0\chi(t)^a=0
\end{align}
Since the sequence $n\mapsto \Sigma_{n}^{\mathcal{M}}(\widetilde{\chi})$ is $b\deg{P}$-periodic, and $n\mapsto \chi(t)^n$ is also $b\deg{P}$-periodic, we deduce that 
\begin{align}\label{eq128}
(q-1)\Sigma_{n}^{\mathcal{M}}(\widetilde{\chi})+g_0\chi(t)^n=0  \end{align}
for all $n\geq 0$. 

From~\eqref{eq128} we see that, for $\text{Re}(s) > 1$, we have
\begin{align*}
\sum_{G\in \mathcal{M}}\widetilde{\chi}(G)q^{-s\deg{G}}=\sum_{n\geq 0}\Sigma_{n}^{\mathcal{M}}(\widetilde{\chi})q^{-sn}=-\frac{g_0(q-1)^{-1}}{1-\chi(t)q^{-s}}.    
\end{align*}
But on the other hand in the same region of $s$ by~\eqref{eqq19} we have 
\begin{align*}
\sum_{G\in \mathcal{M}}\widetilde{\chi}(G)q^{-s\deg{G}}=\prod_{R\in \mathcal{P}}\sum_{k\geq 0}\widetilde{\chi}(P)^kq^{-sk\deg{R}}=\frac{L(s,\chi)}{1-\widetilde{\chi}(P)q^{-s\deg{P^r}}}.    
\end{align*}
Comparing these, we see that
\begin{align}\label{eq129}
 L(s,\chi)=-\frac{g_0(q-1)^{-1}(1-\widetilde{\chi}(P)q^{-s\deg{P^r}})}{1-\chi(t)q^{-s}};   
\end{align}
initially this holds for $\text{Re}(s) > 1$, but by analytic continuation we in fact have this for all $s$. In particular, if $\deg{P^r}\geq 2$, this implies that $L(s,\chi)$ has a root other than $s = 0$ off the critical line $\text{Re}(s) = 1/2$. But by GRH over function fields this is not possible. Hence, $\deg{P^r} = r\deg{P} =1$, and $r = 1$. Then, $P$ being monic and coprime to $t$ implies that $P=t+c$ for some $c\in \mathbb{F}_q\setminus \{0\}$. As $c_q \neq 0$ it follows that $\chi$ is 1 on $\mb{F}_q^{\times}$, and therefore $\chi(t) = \chi(-c) = 1$. Since $L(s,\chi)$ is analytic, it follows that $\widetilde{\chi}(P) = 1$ as well, but then $L(s,\chi) = -g_0(q-1)^{-1} \neq 0$ for all $s$. On the other hand, since $\deg{P} = 1$ we obtain $\sum_{M \in \mc{M}_n} \chi(G) = 0$ for all $n \geq 1$, and thus
$$
L(s,\chi) = \sum_{M \in \mc{M}} \chi(M)q^{-s\deg{M}} = 1.
$$

Comparing with~\eqref{eq129}, we obtain $(q-1)^{-1}=-g_0$, so that as $g_0\in S^1$ we must have $q=2$ and $g_0=-1$. Hence, $\widetilde{\chi}$ must be a generalized character $\pmod{t+1}$, and additionally $\widetilde{\chi}(t+1)=1$. But now if $G\in \mathbb{F}_2[t]$ is any monic polynomial, then by changing bases we can write
$$
G = \sum_{0 \leq j \leq r} a_j(t+1)^j,
$$
with $a_j \in \{0,1\}$. If $j_0$ is the minimal index for which $a_j \neq 0$ then we immediately find that $\widetilde{\chi}(G) = \widetilde{\chi}(a_{j_0}) = 1$. Hence $\widetilde{\chi} \equiv 1$. But this contradicts the assumption $|\Sigma_a^{\mc{M}}(\widetilde{\chi})| = |-g_0\widetilde{\chi}(t)^a| = 1$ for all $a \geq 1$, since
$$
\Sigma_1^{\mc{M}}(\widetilde{\chi}) = 1+\widetilde{\chi}(t) + \widetilde{\chi}(t+1) = 3.
$$
This completes the analysis of Case 2 and the proof of Proposition~\ref{prop_lexico} in this case. 
\end{proof}

\begin{rem}
We remark that the proof of Proposition~\ref{prop_lexico} gives at best a growth rate of $\gg \log \log N$ for the lexicographic discrepancy $S_{N}^{\mathcal{M}}(g)$. This is because the sequence $(n_k)_k$ must satisfy $n_k\geq 2n_{k-1}$ so that by~\eqref{eq:Nkdef} we have $k\ll \log \log N_k$. 
\end{rem}

The proof of Theorem~\ref{thm_lexicographic} is now complete.

\section{The Long Sum Discrepancy}\label{sec_EDP3}

We will next prove our characterization result for unboundedness of the long sum discrepancy (Theorem~\ref{thm_lsdisc_genchar}), as well as Proposition~\ref{prop_bddlf_unbddsf} that complements it. We begin the proof of Theorem~\ref{thm_lsdisc_genchar} with the following simple observation.

\begin{lem}\label{lem_aux}
Let $f: \mc{M} \to \mb{U}$ be a modified character associated with a non-principal Dirichlet character $\chi$ modulo $Q\in \mc{M}$, and let $N > \deg{Q}$ be large. Then 
\begin{align*}
\sum_{G \in \mc{M}_{\leq N}} f(G) = \sum_{0 \leq m < \deg{Q}} \left(\sum_{\substack{A \pmod{Q} \\ A \in \mc{M}_{\leq m}}} \chi(A)\right) \sum_{\substack{\textnormal{rad}(D)|Q \\ \deg{D} = N-m}} f(D).
\end{align*}
\end{lem}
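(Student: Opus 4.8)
The plan is to peel off from each monic $G$ the part supported on the primes dividing $Q$. Every $G \in \mc{M}$ factors uniquely as $G = DG'$ with $\textnormal{rad}(D)\mid Q$ and $(G',Q)=1$; since a modified character is completely multiplicative and satisfies $f(P)=\chi(P)$ for every prime $P\nmid Q$, this gives $f(G)=f(D)f(G')=f(D)\chi(G')$. Because $\chi$ vanishes off polynomials coprime to $Q$, summing over $G\in\mc{M}_{\leq N}$ and exchanging the order of summation produces
$$
\sum_{G\in\mc{M}_{\leq N}}f(G)=\sum_{\substack{\textnormal{rad}(D)\mid Q\\ \deg{D}\leq N}}f(D)\sum_{G'\in\mc{M}_{\leq N-\deg{D}}}\chi(G').
$$

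Next I would feed in the orthogonality of the non-principal character $\chi$, in two forms. First, for any $n\geq\deg{Q}$ the monic polynomials of degree $n$ are equidistributed among the $q^{\deg{Q}}$ residue classes modulo $Q$, each class receiving exactly $q^{n-\deg{Q}}$ of them, so $\sum_{G'\in\mc{M}_n}\chi(G')=q^{n-\deg{Q}}\sum_{A \pmod{Q}}\chi(A)=0$. Hence whenever $\deg{D}\leq N-\deg{Q}$ the inner sum in the display collapses to $\sum_{G'\in\mc{M}_{<\deg{Q}}}\chi(G')$, and a second application of orthogonality — now summing $\chi$ over a complete set of residues — shows this too vanishes. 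Thus only the divisors $D$ with $N-\deg{Q}<\deg{D}\leq N$ survive. For those, write $m:=N-\deg{D}\in\{0,1,\dots,\deg{Q}-1\}$; since monic polynomials of degree $\leq m<\deg{Q}$ represent pairwise distinct residues modulo $Q$, the inner sum becomes $\sum_{G'\in\mc{M}_{\leq m}}\chi(G')=\sum_{\substack{A \pmod{Q}\\ A\in\mc{M}_{\leq m}}}\chi(A)$, and regrouping the outer sum by $\deg{D}=N-m$ yields precisely the asserted identity. (The hypothesis $N>\deg{Q}$ only guarantees that each $m<\deg{Q}$ contributes a nonempty inner sum over $D$.)

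Everything here is essentially formal once the decomposition $G=DG'$ is in place. The one place where there is anything to check is the cancellation of the small-degree divisors, i.e.\ the vanishing $\sum_{G'\in\mc{M}_{<\deg{Q}}}\chi(G')=0$ together with the equidistribution count behind $\sum_{G'\in\mc{M}_n}\chi(G')=0$ for $n\geq\deg{Q}$; these are exactly the steps where non-principality of $\chi$ is used, and the remainder is unwinding Definition~\ref{def_modified} and keeping track of the indexing.
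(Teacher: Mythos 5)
Your argument is the same as the paper's: decompose $G=DG'$ with $\textnormal{rad}(D)\mid Q$ and $(G',Q)=1$, use orthogonality to discard the divisors with $\deg{D}\leq N-\deg{Q}$, and identify the surviving terms with $m=N-\deg{D}<\deg{Q}$. The first orthogonality step is correct: for $n\geq \deg{Q}$ every residue class modulo $Q$ contains exactly $q^{n-\deg{Q}}$ monic polynomials of degree exactly $n$, so $\sum_{G'\in\mc{M}_n}\chi(G')=0$. The final regrouping is also fine.

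The gap is the ``second application of orthogonality.'' The monic polynomials of degree $<\deg{Q}$ do \emph{not} form a complete set of residues modulo $Q$: there are $(q^{\deg{Q}}-1)/(q-1)$ of them against $q^{\deg{Q}}$ classes, and they represent precisely the classes whose reduced representative is monic. Writing each nonzero residue as $cA'$ with $c\in\mb{F}_q^{\times}$ and $A'$ monic of degree $<\deg{Q}$, orthogonality only gives $\bigl(\sum_{c\in\mb{F}_q^{\times}}\chi(c)\bigr)\sum_{G'\in\mc{M}_{<\deg{Q}}}\chi(G')=0$, so the sum you need to vanish does so when $\chi$ is trivial on $\mb{F}_q^{\times}$ but not in general. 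Concretely, take $q=3$, $Q=t$ and $\chi(G)=\psi(G(0))$ with $\psi$ the quadratic character of $\mb{F}_3^{\times}$: then $\sum_{G'\in\mc{M}_{<1}}\chi(G')=\chi(1)=1\neq 0$, and with $f(t)=1$ one computes $\sum_{G\in\mc{M}_{\leq N}}f(G)=N+1$ while the right-hand side of the lemma equals $1$. Running your argument correctly leaves the extra term $\bigl(\sum_{A\in\mc{M}_{<\deg{Q}}}\chi(A)\bigr)\sum_{\textnormal{rad}(D)\mid Q,\,\deg{D}\leq N-\deg{Q}}f(D)$ on the right-hand side, which vanishes only when the first factor does. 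I should add that the paper's own proof contains the same defect in a different guise: it asserts that $|\{G'\in\mc{M}_{\leq N-\deg{D}}:G'\equiv A\pmod{Q}\}|$ is independent of $A$ once $\deg{D}\leq N-\deg{Q}$, whereas this count differs by $1$ according to whether the reduced representative of $A$ is monic. So you have faithfully reproduced the intended argument, but the step as you (and the paper) state it is false, and the identity as stated needs either the extra term or the additional hypothesis that $\chi$ is trivial on $\mb{F}_q^{\times}$.
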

\begin{proof}
We split the sum on the left-hand side according to the common factors of $G$ with $Q$ to obtain
\begin{align*}
\sum_{G \in \mc{M}_{\leq N}} f(G) = \sum_{\textnormal{rad}(D)|Q} f(D)\sum_{G' \in \mc{M}_{\leq N-\deg{D}}} \chi(G') = \sum_{A\pmod Q} \chi(A) \sum_{\textnormal{rad}(D)|Q} f(D) \sum_{\substack{G' \in \mc{M}_{\leq N-\deg{D}} \\ G' \equiv A \pmod{Q}}} 1.
\end{align*}
We separate the contribution with $\deg{D} \leq N-\deg{Q}$ from its complement. Observe that when $\deg{D} \leq N-\deg{Q}$, the inner sum above is independent of $A$. Thus, orthogonality implies that its contribution is 0. On the other hand, if $\deg{D} > N-\deg{Q}$ and $G' \in \mc{M}_{\leq N-\deg{D}}$ with $G' \equiv A \pmod{Q}$ then $G' = A$. It follows that
\begin{align*}
\sum_{G \in \mc{M}_{\leq N}} f(G) = \sum_{\substack{\textnormal{rad}(D)|Q \\ N-\deg{Q} < \deg{D} \leq N}} f(D) \sum_{\substack{A \pmod{Q} \\ A \in \mc{M}_{\leq N-\deg{D}}}} \chi(A).
\end{align*}
Splitting the sum according to the size of $\deg{D}$, we get
\begin{align*}
\sum_{G \in \mc{M}_{\leq N}} f(G) = \sum_{0 \leq m < \deg{Q}} \left(\sum_{\substack{A \pmod{Q} \\ A \in \mc{M}_{\leq m}}} \chi(A)\right) \sum_{\substack{\textnormal{rad}(D)|Q \\ \deg{D} = N-m}} f(D),
\end{align*}
as claimed.
\end{proof}

\begin{proof}[Proof of Theorem~\ref{thm_lsdisc_genchar}]
Let $N > \deg{Q}$ be large. By the residue theorem, we have
\begin{align*}
\sum_{\substack{\textnormal{rad}(D)|Q \\ \deg{D} = N-m}} f(D) &= \frac{1}{2\pi i} \int_{|z| = r} \left(\sum_{\textnormal{rad}(D)|Q} f(D)z^{\deg{D}}\right) \frac{dz}{z^{N-m+1}}\\
&= \frac{1}{2\pi i } \int_{|z| = r}\prod_{P|Q} \left(1-f(P)z^{\deg{P}}\right)^{-1} z^m \frac{dz}{z^{N+1}},
\end{align*}
for any $r \in (0,1)$. 
Using Lemma~\ref{lem_aux} together with this expression for each $m \leq \deg{Q}-1$, we have
\begin{align}
&\sum_{G \in \mc{M}_{\leq N}} f(G) = \sum_{0 \leq m < \deg{Q}} \left(\sum_{\substack{A \pmod{Q} \\ A \in \mc{M}_{\leq m}}} \chi(A)\right) \sum_{\substack{\textnormal{rad}(D)|Q \\ \deg{D} = N-m}} f(D) \nonumber\\
&= \frac{1}{2\pi i } \int_{|z| = r} \prod_{P|Q}\left(1-f(P)z^{\deg{P}}\right)^{-1} \left(\sum_{A \in \mc{M}_{<\deg{Q}}} \chi(A) \sum_{\deg{A} \leq m \leq \deg{Q}-1} z^m \right) \frac{dz}{z^{N+1}} \nonumber\\
&= \frac{1}{2\pi i} \int_{|z| = r} \prod_{P|Q}\left(1-f(P)z^{\deg{P}}\right)^{-1} (1-z)^{-1}\sum_{A \in \mc{M}_{<\deg{Q}}} \chi(A) z^{\deg{A}} (1-z^{\deg{Q}-\deg{A}}) \frac{dz}{z^{N+1}} \nonumber,
\end{align}
where we used the geometric sum formula in the last step. By the orthogonality of characters, we have $\sum_{A\in \mathcal{M}_{<\deg{Q}}}\chi(A)z^{\deg{Q}}=0$. Thus, if $z = q^s$ for some $s \in \mb{C}$ then by writing $\mc{L}(z,\chi) := L(s,\chi)/(1-z)$
(see~\eqref{eqq19} for the definition of $L(s,\chi)$), 
the previous expression simplifies to
\begin{align}
\frac{1}{2\pi i} \int_{|z| = r} \mc{L}(z,\chi) \prod_{P|Q} \left(1-f(P)z^{\deg{P}}\right)^{-1}  \frac{dz}{z^{N+1}}, 
\end{align}
using orthogonality in the last step.

Now let $\lambda_1,\ldots,\lambda_J$ be the collection of distinct roots of $\prod_{P|Q}(1-f(P)z^{\deg{P}})$, with respective multiplicities satisfying $b_1\leq \ldots \leq b_J$; note that $\lambda_j \in S^1$ for all $j$. A partial fraction decomposition of the reciprocal of this polynomial yields coefficients $\{a_{j,l}\}_{1 \leq l \leq b_j, 1 \leq j \leq J}$ such that
\begin{align*}
\prod_{P|Q} (1-f(P)z^{\deg{P}})^{-1} = \prod_{1 \leq j \leq J} (1-\lambda_j z)^{-b_j} = \sum_{1 \leq j \leq J} \sum_{1 \leq l \leq b_j} \frac{a_{j,l}}{(1-\lambda_jz)^l}.
\end{align*}
Noting that for each pair $(j,l)$ we have the formal power series expansion
$$(1-\lambda_j z)^{-l} = \sum_{k \geq 0} \binom{l-1+k}{k}\lambda_j^k z^k,$$
we see that 
\begin{align}
\sum_{G \in \mc{M}_{\leq N}} f(G) &= \sum_{k \geq 0} \sum_{1 \leq j \leq J} \sum_{1 \leq l \leq b_j} a_{j,l}\binom{l-1+k}{k} \lambda_j^k \left(\frac{1}{2\pi i } \int_{|z| = r} \mc{L}(z,\chi) z^k \frac{dz}{z^{N+1}}\right) \nonumber\\
&= \sum_{1 \leq j \leq J} \sum_{1 \leq l \leq b_j} a_{j,l} \sum_{N-\deg{Q} < k \leq N} \left([z^{N-k}] \mc{L}(z,\chi)\right) \binom{l-1+k}{k}\lambda_j^k, \label{eq_forab}
\end{align}
where, given a formal power series $F(z)$ in $z$ we write $[z^m]F(z)$ to denote the $m$th coefficient of $F$, for $m \in \mb{N}\cup \{0\}$. We shall use this last expression to prove both parts of the proposition, beginning with part b). \\
\textbf{Part b).} By hypothesis, $b = b_J\geq 2$. Let $1 \leq i \leq J$ be the minimal index for which $b_i = b_{i+1} = \cdots = b_J\leq \deg{Q}$. As $\binom{l-1+k}{k} = \frac{N^{l-1}}{(l-1)!} + O_{\deg(Q)}(N^{l-2})$ for any $l \geq 2$ and $k \in (N-\deg{Q},N]$, we get
\begin{align*}
&\sum_{1 \leq j \leq J} \sum_{1 \leq l \leq b_j} a_{j,l} \sum_{N-\deg{Q}< k \leq N} \left([z^{N-k}]\mc{L}(z,\chi)\right) \binom{l-1+k}{k} \lambda_j^k \\
&= \sum_{i \leq j \leq J} a_{j,b} \sum_{N-\deg{Q}< k \leq N} \binom{b-1+k}{k} \left([z^{N-k}]\mc{L}(z,\chi)\right) \lambda_j^k + O_{\deg(Q)}(N^{b-2}) \\
&= \frac{N^{b-1}}{(b-1)!} \sum_{i \leq j \leq J} a_{j,b} \sum_{0 \leq m <\deg{Q}} \left([z^m]\mc{L}(z,\chi)\right) \lambda_j^{N-m}  + O_{\deg(Q)}(N^{b-2}) \\
&= \frac{N^{b-1}}{(b-1)!} \sum_{i \leq j \leq J} a_{j,b} \lambda_j^N \mc{L}(\bar{\lambda_j},\chi) + O_{\deg(Q)}(N^{b-2}),
\end{align*}
where in the last step we made the change of variables $m=N-k$, which leads to the power series in the penultimate line simplifying to $\lambda_j^{N}\mathcal{L}(\bar{\lambda_j},\chi)$.

We note that $\mc{L}(\bar{\lambda_j},\chi) \neq 0$ for all $j$ because by GRH~\cite[Thm. 5.5 and Ex. 5.2.2]{EffHay} we know that $\mc{L}(z,\chi)$ has no zeros off the circle $|z| = q^{-1/2}$, aside from a simple zero at $z = 1$ (which has been cancelled in the definition of $\mc{L}(z,\chi)$). Moreover, $a_{j,b} \neq 0$ for all $i \leq j \leq b$ as well, otherwise the maximal power of $(1-\lambda_jz)^{-1}$ in the partial fraction decomposition would be strictly smaller than $b$.  Finally, applying Dirichlet's theorem we can find a sequence of $\{N_r\}_r$ such that $\max_{i \leq m \leq J} |\lambda_m^{N_r}-1| \leq \e$ for any specific choice of $\e > 0$ (chosen small relative to $Q$ and $J$). It follows that for all $l \in \{0,\ldots,J-i-1\}$ we have
\begin{align}\label{eqq20}
\sum_{i \leq j \leq J} a_{j,b} \mc{L}(\bar{\lambda_j},\chi) \lambda_j^{N_r + l}  = \sum_{i \leq j \leq J} a_{j,b} \mc{L}(\bar{\lambda_j},\chi) \lambda_j^l + O_J(\e),
\end{align}
and thanks to the invertibility of the van der Monde matrix generated by $\lambda_i,\ldots,\lambda_J$ (which are distinct by assumption) the expression~\eqref{eqq20} is $\neq 0$ for at least one $l$ and some $\e>0$ sufficiently small. This implies then that
\begin{align*}
\max_{N_r \leq N \leq N_r+J} \left|\sum_{G \in \mc{M}_{\leq N}} f(G)\right| \asymp_{Q} N_r^{b-1},
\end{align*}
as $r \to \infty$. This completes the proof of part b). 

\textbf{Part a).} From~\eqref{eq_forab}, we have
\begin{align*}
\sum_{G \in \mc{M}_{\leq N}} f(G) = \sum_{1 \leq j \leq J} \sum_{1 \leq l \leq b_j} a_{j,l} \sum_{N-\deg{Q} < k \leq N} ([z^{N-k}]\mc{L}(z,\chi)) \binom{l-1+k}{k}\lambda_j^k.
\end{align*}
Since $b_j \leq b_J$, we have $b_j = 1$ for all $j$. As above, we obtain
\begin{align}\label{eqq23}
\sum_{G \in \mc{M}_{\leq N}} f(G) = \sum_{1 \leq j \leq J} a_{j,1} \sum_{N - \deg{Q}<k\leq N} ([z^{N-k}] \mc{L}(z,\chi)) \lambda_j^k = \sum_{1 \leq j \leq J} a_{j,1} \lambda_j^N\mc{L}(\bar{\lambda_j},\chi).
\end{align}
Note that $\lambda_j \in S^1$ for all $j$, and $\mathcal{L}(z,\chi)$, is holomorphic and thus bounded on $S^1$ (in terms solely of the conductor $Q$). Furthermore, $a_{j,1}$ depends only on $Q$. It follows that the sum here is $O_{Q}(1)$. This completes the proof.
\end{proof}

This gives the following list of corollaries, which includes Corollary~\ref{cor_lsdisc1}.

\begin{cor}\label{cor_lsdisc}
Let $f : \mc{M} \to S^1$ be a modified character associated with a non-principal character of modulus $Q$. \\
a) If $Q = P^k$ is a prime power then $\mc{D}_{f} < \infty$. \\
b) If $\omega(Q) \geq 2$ and there exist prime divisors $P_1,P_2$ of $Q$ satisfying $\deg{P_1} = \deg{P_2}$ and $f(P_1) = f(P_2)$ then $\mc{D}_f = \infty$. \\
c) If $\omega(Q) \geq 2$ and here exist prime divisors  $P_1,P_2$ of $Q$ satisfying $f(P_1) = f(P_2) = 1$ then $\mc{D}_f = \infty$. \\
d) Suppose $f$ takes values in $\{-1,+1\}$. \\
i) If $\omega(Q) \geq 4$ then $\mc{D}_f = \infty$. \\
ii) If $\omega(Q) = 3$ then $\mc{D}_f < \infty$ if and only if  (up to permutation) the primes $P_1,P_2,P_3$ dividing $Q$ satisfy $f(P_1) = f(P_2) = -1$, $f(P_3) = 1$, and $v_2(\deg{P_1}) \neq v_2(\deg{P_2})$ and $v_2(\deg{P_j}) \geq v_2(\deg{P_3})$ for $j = 1,2$.\\
iii) If $\omega(Q) = 2$ then $\mc{D}_f < \infty$ if and only if  (up to permutation) the primes $P_1,P_2$ dividing $Q$ satisfy $f(P_1) =-1,$ $f(P_2)=1$, and $v_2(\deg{P_1}) \geq v_2(\deg{P_2})$.
\end{cor}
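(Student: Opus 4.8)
The plan is to reduce the whole statement to the multiplicity structure of the polynomial $p(z)=\prod_{P\mid Q}(z^{\deg{P}}-\overline{f(P)})$ appearing in Theorem~\ref{thm_lsdisc_genchar}. First, since $f$ takes values in $S^1$ (indeed in $\{-1,+1\}$ for part~d)), we have $|f(D)|=1$ for every $D\in\mc{M}$, and complete multiplicativity gives $\sum_{G\in\mc{M}_{\le N}}f(DG)=f(D)\sum_{G\in\mc{M}_{\le N}}f(G)$; hence $\mc{D}_f=\sup_{N\ge 1}\bigl|\sum_{G\in\mc{M}_{\le N}}f(G)\bigr|$. Theorem~\ref{thm_lsdisc_genchar} is now applicable, since $\deg{Q}\ge1$ as $\chi$ is non-principal: if every zero of $p$ is simple then $\mc{D}_f<\infty$ by part~a), whereas if some zero has multiplicity $b\ge2$ then part~b) furnishes an increasing sequence $N_k\to\infty$ with $\bigl|\sum_{G\in\mc{M}_{\le N_k}}f(G)\bigr|\asymp_Q N_k^{b-1}\to\infty$, so $\mc{D}_f=\infty$. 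Thus $\mc{D}_f<\infty$ if and only if $p$ is separable.

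The next step is to characterize separability of $p$ in terms of the data $(\deg{P},f(P))_{P\mid Q}$. Each factor $p_P(z):=z^{\deg{P}}-\overline{f(P)}$ has $\deg{P}$ distinct roots in $\C$ (the $(\deg{P})$-th roots of the nonzero number $\overline{f(P)}$, all lying on $S^1$), so $p$ fails to be separable precisely when two distinct primes $P_1\ne P_2$ dividing $Q$ have a common root: there exists $\zeta\in S^1$ with $\zeta^{\deg{P_1}}=\overline{f(P_1)}$ and $\zeta^{\deg{P_2}}=\overline{f(P_2)}$. Setting $g=(\deg{P_1},\deg{P_2})$ and choosing $a,b\in\Z$ with $a\,\deg{P_1}/g+b\,\deg{P_2}/g=1$, a short B\'ezout argument shows that such $\zeta$ exists if and only if $\overline{f(P_1)}^{\,\deg{P_2}/g}=\overline{f(P_2)}^{\,\deg{P_1}/g}$: for necessity one raises the two relations to the powers $\deg{P_2}/g$ and $\deg{P_1}/g$ and compares; for sufficiency one checks that any $g$-th root of $\overline{f(P_1)}^{\,a}\,\overline{f(P_2)}^{\,b}$ satisfies both relations. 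This is the master criterion: $\mc{D}_f<\infty$ if and only if no two distinct primes dividing $Q$ satisfy the displayed identity.

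From the master criterion the individual items come out. Part~a) is immediate, there being no pair of distinct prime divisors. Part~b): with $\deg{P_1}=\deg{P_2}$ and $f(P_1)=f(P_2)$ one has $g=\deg{P_1}=\deg{P_2}$ and the identity holds, so $\mc{D}_f=\infty$. Part~c): $\overline{f(P_1)}=\overline{f(P_2)}=1$ makes the identity hold, with $\zeta=1$ the common root. For part~d) the function $f$ is real, so $\overline{f(P)}=f(P)\in\{-1,+1\}$; writing $v_2$ for the $2$-adic valuation of a degree and using $\gcd(\deg{P_1}/g,\deg{P_2}/g)=1$ to evaluate the powers $(\pm1)^{\deg{P_j}/g}$, the master criterion for a pair $\{P_1,P_2\}$ becomes: a common root exists if and only if (i) $f(P_1)=f(P_2)=1$, or (ii) $f(P_1)=f(P_2)=-1$ and $v_2(\deg{P_1})=v_2(\deg{P_2})$, or (iii) $\{f(P_1),f(P_2)\}=\{1,-1\}$ with the prime of $f$-value $1$ having the strictly larger value of $v_2$ of its degree. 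Hence $\mc{D}_f<\infty$ if and only if at most one prime $P\mid Q$ has $f(P)=1$, the primes $P\mid Q$ with $f(P)=-1$ have pairwise distinct values of $v_2(\deg{P})$, and, when some $P_0\mid Q$ has $f(P_0)=1$, one has $v_2(\deg{P_0})\le v_2(\deg{P})$ for every $P\mid Q$ with $f(P)=-1$. The statements d)~i)--iii) then follow by specializing this classification to $\omega(Q)\ge4$, $\omega(Q)=3$ and $\omega(Q)=2$ and carrying out the (finite) translation, and Corollary~\ref{cor_lsdisc1} is collected from a), d)ii) and d)iii).

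The analytic substance — the asymptotic evaluation of $\sum_{G\in\mc{M}_{\le N}}f(G)$ through a generating-function/residue computation together with GRH over function fields — is already supplied by Theorem~\ref{thm_lsdisc_genchar}, so the only genuine work here is (a) the B\'ezout computation establishing the master criterion and (b) the elementary but fiddly $2$-adic bookkeeping of part~d): converting the pairwise conditions into the final classification, and then checking, case by case over $\omega(Q)=2,3$ and $\omega(Q)\ge4$ and over the sign patterns $(f(P))_{P\mid Q}$, that one recovers exactly the stated lists (with the correct ``up to permutation'' normalizations). This last enumeration is the step I would be most careful about.
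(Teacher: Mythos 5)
Your approach is the same as the paper's: reduce everything to the multiplicity structure of $p(z)=\prod_{P\mid Q}(z^{\deg{P}}-\bar{f(P)})$ via Theorem~\ref{thm_lsdisc_genchar}, then decide when two factors share a root. The reduction of $\mc{D}_f$ to $\sup_N|\sum_{G\in\mc{M}_{\leq N}}f(G)|$ is fine, your B\'ezout ``master criterion'' is correct and is a clean general-$S^1$ form of the paper's criterion~\eqref{eqq21}, and the classification you extract from it for $\{-1,+1\}$-valued $f$ (at most one prime with $f(P)=1$; pairwise distinct $v_2(\deg{P})$ among the primes with $f(P)=-1$; and the $v_2$ of the degree of the $+1$-prime, if present, no larger than that of any $-1$-prime) is the correct consequence of the theorem.

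The gap is exactly the step you deferred: the claim that d)(i)--(iii) ``follow by specializing'' your classification. They do not. Your classification declares $\mc{D}_f<\infty$ whenever \emph{all} primes dividing $Q$ satisfy $f(P)=-1$ and have pairwise distinct $v_2(\deg{P})$, with no restriction on $\omega(Q)$. For instance, with $\omega(Q)=2$, $\deg{P_1}=1$, $\deg{P_2}=2$ and $f(P_1)=f(P_2)=-1$, one has $p(z)=(z+1)(z^2+1)=(z^4-1)/(z-1)$ with simple zeros, so Theorem~\ref{thm_lsdisc_genchar}~a) gives $\mc{D}_f<\infty$, whereas d)(iii) asserts $\mc{D}_f=\infty$ for this sign pattern; the same occurs for $\omega(Q)=3$ with degrees $1,2,4$ (against d)(ii)) and $\omega(Q)=4$ with degrees $1,2,4,8$ (against d)(i)). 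So as a proof of the statement as written your argument fails at the final enumeration, and actually carrying out that enumeration shows the stated corollary is inconsistent with the theorem it is meant to follow from. The source of the discrepancy lies in the paper's own proof of d)(i), which infers a common root of $z^{\deg{P_1}}=-1$ and $z^{\deg{P_2}}=-1$ from equality of the \emph{parities} of the degrees, whereas the paper's own criterion~\eqref{eqq21} requires equality of the full $2$-adic valuations; your classification is the one consistent with Theorem~\ref{thm_lsdisc_genchar}, and the ``only if'' directions of d)(i)--(iii) (and of Corollary~\ref{cor_lsdisc1}) need to be amended to include the all-$(-1)$, pairwise-distinct-$v_2$ configurations as bounded-discrepancy cases.
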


\begin{proof}
a) Since the zeros of the equation $z^m = a$ (with $m = \deg{P}$ and $a = \bar{f(P)}$) are all distinct, Theorem~\ref{thm_lsdisc_genchar} a) implies that the discrepancy is bounded. 

b) Since the expressions $z^{\deg{P_j}}f(P_j) = 1$ are identical for $j = 1,2$, they thus yield identical roots, so by Theorem~\ref{thm_lsdisc_genchar} b) the claim follows. 

c) This follows from Theorem~\ref{thm_lsdisc_genchar} b), as $z^{\deg{P_1}} = 1$ and $z^{\deg{P_2}} = 1$ must share the common root $z = 1$. 

d) i) Let $f:\mc{M}\to \{-1,+1\}$, and let $P_1,P_2,P_3,P_4$ be distinct prime divisors of $Q$. At least two prime divisors $P_1,P_2$ are such that $f(P_1) = f(P_2)$. If the parities of $\deg{P_1}$ and $\deg{P_2}$ are the same then as in the proof of b) the equations $z^{\deg{P_1}} = f(P_1)$ and $z^{\deg{P_2}} = f(P_2)$ will share a common root. Now by c), if the common value of $f(P_1)$ and $f(P_2)$ is 1 then this is true regardless of these parities. Thus, we may assume that the value $1$ occurs at most once among the values $f(P_j)$, for the 4 prime factors of $Q$. But then at least 3 of the primes $P_j$ are such that $f(P_j) = -1$, and among their degrees at least two have the same parity. Thus, we may conclude that $\prod_{P|Q} (1-f(P_j)z^{\deg{P_j}})$ has a multiple root, and the first claim follows from Proposition~\ref{cor_lsdisc} b). 

ii) Now let $P_1,P_2,P_3$ be the prime divisors of $Q$. The argument in i) shows that if at least two of $f(P_i)$ equal to $1$, or all of the $f(P_i)$ equal to $-1$ then the discrepancy is unbounded. We are left with the case where exactly two of the $f(P_i)$ are $-1$; say $f(P_1)=f(P_2)=-1$ and $f(P_3)=1$. One easily sees that 
\begin{align}\label{eqq21}\begin{split}
 &\{z\in \mathbb{C}:\,\, z^m= -1\}\cap  \{z\in \mathbb{C}:\,\, z^n= -1\}\neq \emptyset\quad \textnormal{if and only if}\quad v_2(m)=v_2(n),\\
 &\{z\in \mathbb{C}:\,\, z^m= -1\}\cap  \{z\in \mathbb{C}:\,\, z^n= 1\}\neq \emptyset\quad \textnormal{if and only if}\quad v_2(m)<v_2(n).
 \end{split}
\end{align}
Applying this with $m,n\in \{\deg{P_1},\deg{P_2},\deg{P_3}\}$ yields the claim.

iii) The proof of the case $\omega(Q) = 2$ is almost identical to that of case $\omega(Q)=3$; again one makes use of~\eqref{eqq21}.
\end{proof}

\begin{proof}[Proof of Proposition~\ref{prop_bddlf_unbddsf}]
This follows by generalizing the Polymath 5 example in~\cite{polymath_example} of a completely multiplicative function having bounded long sum discrepancy.

For $d \geq 1$ define the quantities
\begin{align*}
\alpha_d=\sum_{G\in \mc{M}_d}\Lambda(G)f(G),\quad \beta_d=\sum_{G\in \mc{M}_d}f(G).    
\end{align*}
Using $\deg{G}=\sum_{D\mid G}\Lambda(D)$ and the complete multiplicativity of $f$, we obtain the recursion
\begin{align}\label{recur}
d\beta_d=\sum_{i=1}^d \alpha_i\beta_{d-i}.    
\end{align}
It was shown by Polymath 5~\cite{polymath_example} that there exist a constant $C$ and a completely multiplicative function $f:\mathcal{M}\to \{-1,+1\}$ for which the corresponding $\alpha_i$ satisfy $|\alpha_i|<q^i$ for all $i\geq C$ and for which $0\leq \sum_{0\leq i\leq d}\beta_i\leq C$ for all $i\geq 1$ (Polymath 5 stated their result in the form that if the size $q$ of the field is large enough, then $\sum_{0\leq i\leq d}\beta_i\in \{0,1\}$ for all $i$, but the same proof gives the claim above for all $q$.).  Since the $\beta_i$ are completely determined by the $\alpha_i$, this then means that $\mathcal{D}_g\leq C$ for any completely multiplicative $g$ that produces the same sequence of $\alpha_i$. From this we deduce that there are uncountably many choices of $g$: for each subset $S$ of $\mathbb{N}\cap [C+1,\infty)$, we may form a new completely multiplicative function $f_S$ which is obtained from $f$ by choosing for each $d\in S$ two irreducibles $P_{1,d}, P_{2,d}$ of degree $d$ with $f(P_{1,d})=-f(P_{2,d})$, putting $f_S(P_{j,d}) = -f(P_{j,d})$ for $j = 1,2$, and setting $f_S(P)=f(P)$ at all other irreducibles $P$. The new function $f_S$ has the same sequence of $\alpha_i$ associated with it as to $f$, so it too has discrepancy bounded by $C$.
\end{proof}

\bibliography{FFBib}

\begin{thebibliography}{10}

\bibitem{bcc}
P.~Borwein, S.~K.~K. Choi, and M.~Coons.
\newblock Completely multiplicative functions taking values in {$\{-1,1\}$}.
\newblock {\em Trans. Amer. Math. Soc.}, 362(12):6279--6291, 2010.

\bibitem{davenport}
H.~Davenport.
\newblock {\em Multiplicative number theory}, volume~74 of {\em Graduate Texts
  in Mathematics}.
\newblock Springer-Verlag, New York, third edition, 2000.
\newblock Revised and with a preface by Hugh L. Montgomery.

\bibitem{EffHay}
G.~W. Effinger and D.R. Hayes.
\newblock {\em Additive number theory of polynomials over a finite field}.
\newblock Clarendon Press, 1991.

\bibitem{erdos1957}
P.~Erd\H{o}s.
\newblock Some unsolved problems.
\newblock {\em Michigan Math. J.}, 4:291--300, 1957.

\bibitem{GrHaSoFF}
A.~Granville, A.J. Harper, and K.~Soundararajan.
\newblock Mean values of multiplicative functions in function fields.
\newblock {\em Research in Number Theory}, pages 1--25, 2015.

\bibitem{Hayes}
D.R. Hayes.
\newblock The distribution of irreducibles in {GF}$[q,x]$.
\newblock {\em Trans. Amer. Math. Soc.}, 117:101--127, 1965.

\bibitem{HelRad}
H.~A. Helfgott and M.~Radiziwi\l\l.
\newblock Expansion, divisibility and parity.
\newblock arXiv: 2103.06853.

\bibitem{Katsur}
I.~K\'{a}tai.
\newblock Continuous homomorphisms as arithmetical functions, and sets of
  uniqueness.
\newblock In {\em Number theory}, Trends Math., pages 183--200. Birkh\"{a}user,
  Basel, 2000.

\bibitem{KluThe}
O.~Klurman.
\newblock {\em Mean Values and Correlations of Multiplicative Functions: The
  ``Pretentious'' Approach}.
\newblock PhD thesis, Universit\'{e} de Montr\'{e}al, 2017.

\bibitem{kmt-ruzsa}
O.~Klurman, A.~P. Mangerel, C.~Pohoata, and J.~Ter{\"a}v{\"a}inen.
\newblock Multiplicative functions that are close to their mean.
\newblock {\em Trans. Amer. Math. Soc.}, 374:7967--7990, 2021.

\bibitem{KMT_FFChowla}
O.~Klurman, A.~P. Mangerel, and J.~Ter\"{a}v\"{a}inen.
\newblock Correlations of multiplicative functions in function fields.
\newblock {\em Mathematika}, 69(1):155--231, 2023.

\bibitem{wooley-liu}
Y.-R. Liu and T.~D. Wooley.
\newblock Waring's problem in function fields.
\newblock {\em J. Reine Angew. Math.}, 638:1--67, 2010.

\bibitem{mcnamara}
R.~McNamara.
\newblock {\em Dynamical methods for the {S}arnak and {C}howla conjectures}.
\newblock PhD thesis, UCLA, 2021.

\bibitem{MVgold}
H.~L. Montgomery and R.~C. Vaughan.
\newblock The exceptional set in {G}oldbach's problem.
\newblock {\em Acta Arith.}, 27:353--370, 1975.

\bibitem{polymath5}
Polymath5.
\newblock The {E}rd{\H{o}}s discrepancy problem.
\newblock \par
  \texttt{http://michaelnielsen.org/polymath1/index.php?title=The\_Erdos\_discrepancy\_problem}.

\bibitem{polymath_example}
Polymath5.
\newblock Function field version.
\newblock \par
  \texttt{http://michaelnielsen.org/polymath1/index.php?title=Function\_field\_version}.

\bibitem{TaoEDP}
T.~Tao.
\newblock The {E}rd{\H{o}}s discrepancy problem.
\newblock {\em Discrete Anal.}, 1:29 pp, 2016.

\bibitem{tao}
T.~Tao.
\newblock The logarithmically averaged {C}howla and {E}lliott conjectures for
  two-point correlations.
\newblock {\em Forum Math. Pi}, 4e8:36pp., 2016.

\bibitem{Chudak}
N.~Tchudakoff.
\newblock Theory of the characters of number semigroups.
\newblock {\em J. Indian Math. Soc. (N.S.)}, 20:11--15, 1956.

\end{thebibliography}
\bibliographystyle{plain}

\end{document}